\DeclareMathOperator{\rank}{rank}
\newcommand{\bfs}{\boldsymbol}
\newtheorem{theorem}{Theorem}[section]
\newtheorem{corollary}[theorem]{Corollary}
\newtheorem{lemma}[theorem]{Lemma}
\newtheorem*{fact*}{Fact}
\newtheorem{proposition}[theorem]{Proposition}
\theoremstyle{definition}
\newtheorem{remark}[theorem]{Remark}
\numberwithin{equation}{section}
\newcommand{\N}{\mathbb N}
\newcommand{\A}{\mathbb A}
\newcommand{\F}{\mathbb F}
\newcommand{\K}{\mathbb K}
\newcommand{\Pp}{\mathbb P}
\newcommand{\fp}{\F_{\hskip-0.7mm p}}
\newcommand{\fq}{\F_{\hskip-0.7mm q}}
\newcommand{\cfq}{\overline{\F}_{\hskip-0.7mm q}}
\def\ifm#1#2{\relax \ifmmode#1\else#2\fi}
\newcommand{\klk}    {\ifm {,\ldots,} {$,\ldots,$}}
\newcommand{\wt}    {\ifm {{\sf wt}} {{$\sf wt$}}}
\begin{document}

\title[Variants of diagonal equations over finite fields]{Estimates on the number of $\fq$--rational solutions of variants of diagonal equations over finite fields}
%
%
%
\author[M. P\'erez]{
Mariana P\'erez${}^{1,3}$}
\author[M. Privitelli]{
Melina Privitelli${}^{2,3}$}

\address{${}^{1}$ Consejo Nacional de Investigaciones Científicas y Técnicas (CONICET),
Ar\-gentina}
\address{${}^{2}$
Universidad Nacional de Gene\-ral Sarmiento, Instituto de Ciencias, J.M. Guti\'errez 1150
(B1613GSX) Los Polvorines, Buenos Aires, Argentina}
\email{mprivite@ungs.edu.ar}
\address{${}^{3}$
Universidad Nacional de Hurlingham, Instituto de Tecnología e Ingeniería, Av. Gdor. Vergara 2222
(B1688GEZ) Villa Tesei, Buenos Aires, Argentina}
\email{mariana.perez@unahur.edu.ar}

\thanks{The authors were partially supported by the grants
PIP CONICET 11220130100598, PIO CONICET-UNGS 14420140100027 and ICI-UNGS 30/1146}

\keywords{Finite fields, symmetric
polynomial, singular locus, rational solutions, diagonal equations}%
\subjclass[2010]{11T06, 05E05, 14G05, 14G15, 11G25}

\begin{abstract}
In this paper we study the set of  $\fq$-rational solutions of equations defined by polynomials evaluated in  power-sum  polynomials with coefficients in $\fq$. This is done by means of applying a methodology which relies on the study of the geometry of the set of common zeros of symmetric polynomials over the
algebraic closure of $\fq$. We provide improved estimates and existence results of $\fq$-rational solutions  to  the following equations: {\it{deformed diagonal equations}}, {\it{generalized Markoff-Hurwitz-type equations}} and {\it{Carlitz's equations}}. We extend these techniques to  more general variants of diagonal equations over finite fields.
\end{abstract}

\maketitle

\section{Introduction}

Several problems of coding theory, cryptography and combinatorics require the study of
the set of rational points of varieties defined over a finite field $\fq$ on which the symmetric
group of permutations of the coordinates acts. In coding theory, deep holes in the standard
Reed–Solomon code over $\fq$ can be expressed in terms of the set of zeros with coefficients in $\fq$ of
certain symmetric polynomials associated to the code  (see, e.g., \cite{ChMu07} or  \cite{CaMaPr12}). In cryptography, the characterization of monomials defining an almost perfect nonlinear polynomial or
a differentially uniform mapping can be reduced to estimate the number of $\fq$–rational zeros of some symmetric polynomials (see, e.g., \cite{Ro09} or \cite{AuRo10}). Finally,
 several applications in combinatorics over finite fields,
 such as the determination
of the average cardinality of the value set and the distribution of factorization patterns of
families of univariate polynomials with coefficients in $\fq$, has also been expressed in terms
of the number of common $\fq$–rational zeros of symmetric polynomials defined over $\fq$ (see
\cite{CeMaPePr14} and \cite{CeMaPe17}).
In \cite{CaMaPr12}, \cite{CeMaPePr14}, \cite{MaPePr14}, \cite{CeMaPe17} and \cite{MaPePr19} we have  developed a methodology to deal with
some of the problems mentioned above. This methodology relies on the study of the geometry of the set of common zeros of the symmetric polynomials under consideration over the
algebraic closure of $\fq$. By means of such  study we were able to prove that, in all
the cases, the set of common zeros in $\fq$ of the involved
polynomials is a complete intersection whose singular locus has a “controlled” dimension.
This  allowed us to apply certain explicit estimates on the number of $\fq$–rational zeros
of projective complete intersections defined over $\fq$  to obtain a conclusion for the problem under consideration (see, e.g., \cite{GhLa02a}, \cite{CaMa07}, \cite{CaMaPr15}
or \cite{MaPePr16}).

The purpose of this article is twofold. On one hand, we apply our techniques to the problem of estimating the number of $\fq$--rational solutions of certain variants of diagonal equations. On the other hand, we present a more general framework of the theory by extending the aforementioned methodology to a wider class of varieties defined by polynomials evaluated in symmetric polynomials.

We shall consider three well known classes of polynomial equations over finite fields: {\it{deformed diagonal equations}}, {\it{generalized Markoff-Hurwitz-type equations}} and {\it{Carlitz's equations}}.
All previous results on the number of $\fq$--rational solutions of these types of equations rely on techniques of combinatorial analysis, so our approach to the study these problems is novel.

\textbf{Deformed diagonal equations.} Given $g \in \fq[X_1,\ldots,X_n]$, a deformed diagonal equation is an equation of the type 
\begin{equation*} f:= c_1X_1^{m}+ \cdots+c_n X_n^{m}+ g=0, \end{equation*}
where $c_i \in \fq \setminus \{0\}$ and  $\deg(g)<m$. In comparison with diagonal equations, which correspond to the case where $g$ is a constant polynomial, there are much fewer results about the number of $\fq$-solutions of deformed diagonal equations. In \cite{Ca56}, L. Carlitz provides a result which garantees the existence of an $\fq$-rational solution of a deformed diagonal equation when   $\mathrm{char}(\fq)$ divides   $m-1$.  Later, in 2006,  B. Felszeghy \cite{Fe06}  extends Carlitz's result by dropping  the requirement over $\mathrm{char}(\fq)$, but which holds only for prime fields.  Another generalization was provided by Castro et. al. \cite{CaRuVe08} for the case when the exponents are not necessarily equal by computing the exact $p$-divisibility of certain exponential sums. On the other hand, A.  Adolphson and  S. Sperber \cite{AdSp88} used Newton polyhedra
 to prove a result that derives an estimate on the number of $\fq$--rational solutions of this type  of equations. In this article we improve the existence results in the literature imposing no restriction to the characteristic of the field and extending Felszeghy's for several cases. Furthermore, we give an estimate on the number $N_g$ of $\fq$--rational solutions of these type of equations: we prove that  $N_g=q^{n-1}+\mathcal{O}(q^{n/2})$, providing  an explicit expression for the constant underlying the $\mathcal{O}$ notation. 
 
 \textbf{Generalized Markoff-Hurwitz-type equations.} These are of the form
\begin{equation*}\label{Baoulina}
(a_1X_1^{m_1}+\cdots +a_nX_n^{m_n}+a)^k=bX_1^{k_1}\ldots X_n^{k_n},
\end{equation*}
 where $m, n, k_1, \ldots, k_n$, $k$ are positive integers, $a, b \in \fq$ and $a_i \in \fq\setminus \{0\}$, $1\leq i \leq n$. Generalized Markoff-Hurwitz-type equations have been well studied in the special case  $a=0$. Moreover, several papers provide explicit formulas of the number of $\fq$--rational solutions of this type of equations in very particular cases (see, e.g., \cite{Ba15}). In this article, we concentrate in the case $a\neq 0$ and $k=1$. More precisely, we obtain an estimate on $N^*$, the number of $\fq$--rational solutions with the condition that $x_1\cdots x_n \neq 0$. This estimate improves Mordell 's  \cite{Mo63} for the case $m_1=\cdots = m_n$ since our result holds without conditions on the characteristic of the field and we require that $m>k_1+\cdots+k_n$ instead of  $k_1= \cdots= k_n=1$. 
Also, our estimate improves Mordell 's  by determining an extra term in the asymptotic development of $N^*$ in terms of $q$.


\textbf{Carlitz 's equations.} These are of the form
\begin{equation*}
h_1(X_1)+\cdots +h_n(X_n)=g,
\end{equation*} 
where $h_i=a_{d,i} T^d+\cdots +a_{0,i} \in \fq[T]$, $\deg(h_i)=d$ for $1\leq i \leq n$ and $g\in \fq[X_1,\ldots,X_n]$ is such that $\deg (g)< d$. In this case, we provide an explicit estimate on the number $N$ of $\fq$--solutions of this type of equations which implies that $N=q^{n-1}+\mathcal{O}(q^{n/2})$. We also provide an explicit upper bound for the constant underlying the $\mathcal{O}$ notation in terms of $d$ and $n$. This result improves Carlitz's estimate $N=q^{n-1}+\mathcal{O}(q^{n-w})$,
where $w=\frac{1}{kd}$ and the constant implied by the $\mathcal{O}$ is not explicitly given (see \cite{Ca53}). Moreover, Carlitz's result only holds when $g$ is a constant polynomial. We also obtain an existence result which improves Carlitz's in several aspects. Finally, as a particular case of this type of equations, we study the Dickson's equations.

All the aforementioned results complement those existing  in the literature.

 Our other main objective is the extension of our geometric methodology to a more general type of equations: those given by polynomials evaluated in power-sum polynomials. More precisely, let $P_{m_j}=X_1^{m_{j}}+\cdots+X_n^{m_{j}}$ be the $m_j$-power sum polynomial of $\fq[X_1,\ldots,X_n]$ and  $g\in \fq$ or $g\in \fq[X_1,\ldots,X_n]$ be a polynomial with $deg(g)< c(m_1 \klk m_d)$, where $c(m_1 \klk m_d)$ is a constant which depends on $m_1 \klk m_d$ .
%
%
  We consider  new indeterminates  $Y_1,\ldots,Y_d$ over $\fq$  and $f\in \fq[Y_1,\ldots,Y_d]$. Our purpose is to estimate the number of $\fq$--solutions of the following type of equations:
  $$f(P_{m_1} \klk P_{m_d})+ g=0.$$ 
   We quickly outline our methodology to provide insight on the ideas behind it.
  Let  $\wt :\fq[Y_1,\ldots,Y_d] \longrightarrow \N$ be the weight defined by setting $\wt(Y_j) :=m_{j}$ for $1\leq j \leq d$ and let $\wt(f)$ be the \textit{weight} of $f$.
  The equation above can be rewritten in the following way: 
 $$f(P_{m_1} \klk P_{m_d})+ X_1^{\wt(f)}+\cdots + X_n^{\wt(f)}+g_1=0,$$
  where $g_1\in \fq[X_1,\ldots,X_n]$ and $\deg(g_1)<\wt(f).$
  We concentrate on the more general problem of estimating the number of $\fq$--rational solutions of the equation
  $$f(P_{m_1} \klk P_{m_d})+ X_1^{e}+\cdots + X_n^{e}+g_1=0,$$
  where $g_1\in \fq[X_1,\ldots,X_n]$ and $\deg(g_1)<e$ for \textit{any}  positive integer $e$.  
  Let $f^{\wt}$ stand for the component
of highest weight of $f$ and 
let $\nabla f$ and $\nabla f^{\wt}$ be
 the gradients of $f$ and $f^{\wt}$ respectively.
 Suppose that the following hypotheses hold:
 \begin{enumerate}
 \item[$(H_1)$] $\nabla f\neq 0$ on every point of $ \A^d$,
\item[$(H_2)$] $\nabla f^{\wt}\neq 0$ on every point of $ \A^d$,
 \end{enumerate}
 and let $R_g$ be the polynomial
$R_g:=f(P_{m_1} \klk P_{m_d})+  X_1^{e}+\cdots + X_n^{e}+g_1$.
We shall study the geometric properties of the  affine hypersurfaces $V_g:=V(R_g)\subset\A^n$ with similar arguments as those in the papers cited above. In order to estimate the number of $\fq$-rational points of $V_g$ we consider $\mathrm{pcl}(V_g)$, the projective closure of $V_g$. We shall provide a suitable bound of the dimension of the singular locus of $\mathrm{pcl}(V_g)$ which allows us to prove that $\mathrm{pcl}(V_g)$ is absolutely irreducible. Then, applying estimates for absolutely irreducible singular projective varieties  \cite{GhLa02a}, can we provide the main result of this part. 
\begin{theorem}\label{teo 1 intro}
 Let $d,n, e$ be positive integers such that $1 \leq d \leq n-3$. Let $m_{1},\ldots, m_{d}$ be positive integers with  $ m_{1}<\cdots<  m_{d}$. Assume that $\mathrm{char}(\fq)$ does not divide $e$ and  $m_{j}$ for all $1\leq j \leq d$. Let $V_g=V(R_g) \subset \A^n$ be the variety defined by $R_g=f(P_{m_1} \klk P_{m_d})+g$ with $f \in  \fq[Y_1 \klk Y_d]$ and  $g \in \fq[X_1 \klk X_n]$ of degree $e$ defined as $g= X_1^e+\cdots + X_n^e+g_1$. Suppose that   $(H_1)-(H_2)$ hold, and let $\delta:= \deg(R_g)$. Then we have the following estimates on $|V_g(\fq)|$, the number of $\fq$--rational points of $V_g$:
 \begin{itemize}  
 \item if  $e <  \deg(R_g-g)$ 
\begin{equation*} \label{estimation 2}
\big||V_g(\fq)|-q^{n-1}\big|\leq (q^{3/2}+1)q^{\frac{n+d-4}{2}}((\delta-1)^{n-d}q^{1/2}+6(\delta+2)^{n}),
\end{equation*}
\item if $e = \deg(R_g-g)$
\begin{equation*} \label{estimation 3}
\big||V_g(\fq)|-q^{n-1}\big|\leq (q+1)q^{\frac{n+d-3}{2}}((\delta-1)^{n-d-1}q^{1/2}+6(\delta+2)^{n}),
\end{equation*}
\item if $e > \deg(R_g-g)$
\begin{equation*} \label{estimation 4}
\big||V_g(\fq)|-q^{n-1}\big|\leq  q^{\frac{n-2}{2}}\big(((\delta-1)^{n-d-1}q^{1/2}+6(\delta+2)^{n})q^{(d+1)/2} +(\delta-1)^{n-1}\big).
\end{equation*}
On the other hand, if $g\in \fq$ we have that 
\begin{equation*} \label{estimation 1}
\big||V_g(\fq)|-q^{n-1}\big|\leq (q+1)q^{\frac{n+d-4}{2}}((\delta-1)^{n-d}q^{1/2}+6(\delta+2)^{n}).
\end{equation*}
\end{itemize}
\end{theorem}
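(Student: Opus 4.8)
The plan is to follow the geometric methodology recalled above. Write $R_g=f(P_{m_1}\klk P_{m_d})+X_1^e+\cdots+X_n^e+g_1$ with $\deg g_1<e$, put $\delta:=\deg R_g$, and observe that $\delta=\max\{\deg f(P_{m_1}\klk P_{m_d}),\,e\}$, while the highest degree homogeneous component $R_g^{\delta}$ ---which cuts out the part at infinity $V_\infty:=\mathrm{pcl}(V_g)\cap\{X_0=0\}\subset\Pp^{n-1}$--- equals $f^{\wt}(P_{m_1}\klk P_{m_d})$ if $e<\deg f(P_{m_1}\klk P_{m_d})$, equals $f^{\wt}(P_{m_1}\klk P_{m_d})+X_1^e+\cdots+X_n^e$ if the two degrees coincide, and equals $X_1^e+\cdots+X_n^e$ if $e$ is strictly larger. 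Since $|V_g(\fq)|=|\mathrm{pcl}(V_g)(\fq)|-|V_\infty(\fq)|$, it suffices to estimate these two projective point counts; their leading terms telescope, $p_{n-1}-p_{n-2}=q^{n-1}$ with $p_r:=q^r+\cdots+q+1$, which produces the $q^{n-1}$ in the statement.

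The key step is to bound the dimension of $\mathrm{Sing}(\mathrm{pcl}(V_g))$ and of $\mathrm{Sing}(V_\infty)$. A point of the affine chart lying in $\mathrm{Sing}(V_g)$ satisfies $R_g=0$ together with
\[\frac{\partial R_g}{\partial X_i}=\sum_{j=1}^{d}m_j\,\frac{\partial f}{\partial Y_j}(P_{m_1}\klk P_{m_d})\,X_i^{m_j-1}+e\,X_i^{e-1}+\frac{\partial g_1}{\partial X_i}=0\qquad(1\le i\le n).\]
Since $\mathrm{char}(\fq)$ divides neither $e$ nor any $m_j$ and, by $(H_1)$, $\nabla f$ vanishes nowhere, the coefficient vector $\big(m_1\tfrac{\partial f}{\partial Y_1}(P)\klk m_d\tfrac{\partial f}{\partial Y_d}(P),\,e\big)$ is nonzero at every point ---in fact its last entry is always nonzero--- and it depends on the point only through $P=(P_{m_1}\klk P_{m_d})$, hence is constant along the fibres of $P$. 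Therefore each coordinate $x_i$ of a singular point is a root of a univariate polynomial of degree at most $\max\{e,m_d\}-1$ whose leading part is the same for all $i$; a Vandermonde-type counting argument then shows that, away from a subvariety of controlled dimension, the $x_i$ take few distinct values, which bounds $\dim\mathrm{Sing}(V_g)$. Running the same analysis on $R_g^{\delta}$, now invoking $(H_2)$ (and again $\mathrm{char}\nmid m_j,e$), bounds $\dim\mathrm{Sing}(V_\infty)$; note that the Fermat hypersurface $\{X_1^e+\cdots+X_n^e=0\}$ is actually smooth in $\Pp^{n-1}$ when $\mathrm{char}\nmid e$, which is precisely what makes the case $e>\deg(R_g-g)$ special (the singular locus at infinity becomes empty). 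Tracking these estimates through the three cases yields bounds of the shape $\dim\mathrm{Sing}(\mathrm{pcl}(V_g))\le d$ and $\dim\mathrm{Sing}(V_\infty)\le d-1$, both $\le n-3$ exactly because $d\le n-3$.

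From the Jacobian description, $\dim\mathrm{Sing}(\mathrm{pcl}(V_g))\le n-3$ forces the homogenization of $R_g$ to be squarefree and $\mathrm{pcl}(V_g)$ to be regular in codimension one; being a hypersurface it is $S_2$, hence normal, and a connected normal hypersurface of $\Pp^n$ with $n\ge 4$ is absolutely irreducible. The same argument applies to $V_\infty$ whenever it is not already smooth. One then applies the estimates of \cite{GhLa02a} to $\mathrm{pcl}(V_g)$ ---of dimension $n-1$, degree $\delta$, with singular locus of dimension $\le d$ and of degree bounded by a power of $\delta-1$ via a B\'ezout-type bound, and with sum of Betti numbers $\le 6(\delta+2)^n$ by the standard estimate--- and to $V_\infty$, of dimension $n-2$ and degree $\delta$, with its own singular-locus data. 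Subtracting the two estimates and simplifying produces the four displayed inequalities: the first two according to whether $R_g^{\delta}$ carries the extra summand $X_1^e+\cdots+X_n^e$, the third exploiting the smoothness at infinity ---whence the extra term $(\delta-1)^{n-1}q^{(n-2)/2}$, the Weil bound for the smooth degree-$\delta$ Fermat hypersurface $V_\infty\subset\Pp^{n-1}$, whose primitive middle Betti number is $\approx(\delta-1)^{n-1}$--- and the last corresponding to $g\in\fq$, where $g_1=0$ makes the affine part of $\mathrm{Sing}(V_g)$ smaller and the estimate correspondingly sharper.

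I expect the main obstacle to be the singular-locus dimension bound of the second step: in the gradient equations one must cleanly separate the fixed top-degree contribution common to all coordinates from the $i$-dependent lower-order perturbation $\partial g_1/\partial X_i$, and show that outside a locus of dimension $\le d$ this forces enough coincidences among the $x_i$ (or $x_i=0$) ---it is exactly here that the ordering $m_1<\cdots<m_d$ of the exponents (together with their comparison with $e$) and the hypotheses $\mathrm{char}\nmid e,m_j$ enter, through a Vandermonde nondegeneracy. Once this geometric input is secured, the case split and the explicit constants in the four estimates are a routine bookkeeping exercise with the Ghorpade--Lachaud inequality.
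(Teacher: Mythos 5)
Your overall architecture --- pass to $\mathrm{pcl}(V_g)$, identify the top--degree form $R_g^{\delta}$ in the three regimes, bound the singular loci, deduce absolute irreducibility, then apply the Ghorpade--Lachaud estimate to $\mathrm{pcl}(V_g)$ and to $V_g^{\infty}$ (Deligne when $V_g^{\infty}$ is smooth) and subtract --- is exactly the paper's. The gap is in the one step that carries all the content: the bound $\dim\Sigma_g\le d$. Your mechanism for it does not work as stated. You propose that each coordinate $x_i$ of a singular point is a root of a single univariate polynomial whose coefficients depend only on $P(\mathbf{x})$, hence are constant on fibres of $\boldsymbol{P}$, so that each fibre meets $\Sigma_g$ in finitely many points and $\dim\Sigma_g\le d$ follows. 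But the gradient equation for $X_i$ contains the term $\partial g_1/\partial X_i$, which is an arbitrary polynomial in \emph{all} the variables (only its total degree is controlled); it is neither univariate in $X_i$ nor constant on fibres of $\boldsymbol{P}$. So the singular points in a fibre are not cut out by a univariate equation and the finiteness of fibres does not follow. You flag this yourself as ``the main obstacle,'' which is honest, but it means the key idea is missing rather than merely unpolished. (Your argument is essentially the one that does work in the special case where $f$ is linear, treated separately in the paper, because there the coefficients $b_jm_j$ are genuine constants and the leading terms $X_j^{e-1}$ or $X_j^{m_i-1}$ dominate $\partial g_1/\partial X_j$.)

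What the paper does instead is a rank stratification of the $d\times n$ Jacobian $A(\mathbf{x})=(\partial P_{m_j}/\partial X_i(\mathbf{x}))$. Since $\nabla R_g=\nabla f(\boldsymbol{P})\cdot A+\nabla g$, hypothesis $(H_1)$ says the linear system $A^{t}(\mathbf{x})Y^{t}=-\nabla g(\mathbf{x})^{t}$ has the nonzero solution $\nabla f(\boldsymbol{P}(\mathbf{x}))$, so every singular point lies either in $Z_1=\{\rank A<d\}$ or in $Z_2=\{\rank A=\rank M_A=d\}$, $M_A$ being the augmented matrix. Each stratum is cut out by minors of $A$ or $M_A$; these are generalized Vandermonde determinants whose highest--degree components factor as $\mathrm{VDM}(X_1\klk X_d)\prod_k X_k^{\ast}\prod_k(X_j-X_k)P(X_j)$, and a Gr\"obner--basis argument (pairwise coprime leading terms, then Eisenbud's Proposition 15.15 together with the highest--weight--component lemma) shows the resulting polynomials form a regular sequence, giving $\dim Z_1\le d-1$ and $\dim Z_2\le d$. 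Crucially, in $Z_2$ the row $-\nabla g$ enters the minors only through their highest--degree component, which sees only $-eX_j^{e-1}$ because $\deg g_1<e$ --- this is precisely the device that disposes of the cross--terms of $g_1$ and that your sketch lacks. The same stratification applied to $R_g^{\delta}$, with $(H_2)$ in place of $(H_1)$, gives the at--infinity bounds ($d-2$, $d-1$, or empty in the three regimes). Your irreducibility argument via normality is workable but can be replaced by the paper's simpler one: two components of a hypersurface in $\Pp^n$ meet in dimension $n-2$, which would be singular, contradicting $d\le n-3$. The final bookkeeping with the Ghorpade--Lachaud and Deligne bounds is as you describe.
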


The paper is organized as follows. In Section 2 we collect the notions of algebraic geometry we use. In Section 3 we 
study the geometric properties of the \textit{deformed hypersurfaces} $V_g$ and we settle Theorem \ref{teo 1 intro}. Finally, in Section 4 we apply our methodology to obtain estimates and existence results of deformed diagonal equations, generalized Markoff-Hurwitz-type equations and Carlitz's equations.

\section{Basic notions of algebraic geometry}
\label{sec: notions of algebraic geometry}
In this section we collect the basic definitions and facts of
algebraic geometry that we need in the sequel. We use standard
notions and notations which can be found in, e.g., \cite{Kunz85},
\cite{Shafarevich94}.

Let $\K$ be any of the fields $\fq$ or $\cfq$. We denote by $\A^r$
the affine $r$--dimensional space $\cfq{\!}^{r}$ and by $\Pp^r$ the
projective $r$--dimensional space over $\cfq{\!}^{r+1}$. Both spaces
are endowed with their respective Zariski topologies over $\K$, for
which a closed set is the zero locus of a set of polynomials of
$\K[X_1,\ldots, X_r]$, or of a set of homogeneous polynomials of
$\K[X_0,\ldots, X_r]$.

A subset $V\subset \Pp^r$ is a {\em projective variety defined over}
$\K$ (or a projective $\K$--variety for short) if it is the set of
common zeros in $\Pp^r$ of homogeneous polynomials $F_1,\ldots, F_m
\in\K[X_0 ,\ldots, X_r]$. Correspondingly, an {\em affine variety of
	$\A^r$ defined over} $\K$ (or an affine $\K$--variety) is the set of
common zeros in $\A^r$ of polynomials $F_1,\ldots, F_{m} \in
\K[X_1,\ldots, X_r]$. We think a projective or affine $\K$--variety
to be equipped with the induced Zariski topology. We shall denote by
$\{F_1=0,\ldots, F_m=0\}$ or $V(F_1,\ldots,F_m)$ the affine or
projective $\K$--variety consisting of the common zeros of
$F_1,\ldots, F_m$.

In the remaining part of this section, unless otherwise stated, all
results referring to varieties in general should be understood as
valid for both projective and affine varieties.

A $\K$--variety $V$ is {\em irreducible} if it cannot be expressed
as a finite union of proper $\K$--subvarieties of $V$. Further, $V$
is {\em absolutely irreducible} if it is $\cfq$--irreducible as a
$\cfq$--variety. Any $\K$--variety $V$ can be expressed as an
irredundant union $V=\mathcal{C}_1\cup \cdots\cup\mathcal{C}_s$ of
irreducible (absolutely irreducible) $\K$--varieties, unique up to
reordering, called the {\em irreducible} ({\em absolutely
	irreducible}) $\K$--{\em components} of $V$.

For a $\K$--variety $V$ contained in $\Pp^r$ or $\A^r$, its {\em
	defining ideal} $I(V)$ is the set of polynomials of $\K[X_0,\ldots,
X_r]$, or of $\K[X_1,\ldots, X_r]$, vanishing on $V$. The {\em
	coordinate ring} $\K[V]$ of $V$ is the quotient ring
$\K[X_0,\ldots,X_r]/I(V)$ or $\K[X_1,\ldots,X_r]/I(V)$. The {\em
	dimension} $\dim V$ of $V$ is the length $n$ of a longest chain
$V_0\varsubsetneq V_1 \varsubsetneq\cdots \varsubsetneq V_n$ of
nonempty irreducible $\K$--varieties contained in $V$. 
We say that $V$ has {\em pure dimension} $n$ if every irreducible
$\K$--component of $V$ has dimension $n$. A $\K$--variety of $\Pp^r$
or $\A^r$ of pure dimension $r-1$ is called a $\K$--{\em
	hypersurface}. A $\K$--hypersurface of $\Pp^r$ (or $\A^r$) can also
be described as the set of zeros of a single nonzero polynomial of
$\K[X_0,\ldots, X_r]$ (or of $\K[X_1,\ldots, X_r]$).

The {\em degree} $\deg V$ of an irreducible $\K$--variety $V$ is the
maximum of $|V\cap L|$, considering all the linear spaces $L$ of
codimension $\dim V$ such that $|V\cap L|<\infty$. More generally,
following \cite{Heintz83} (see also \cite{Fulton84}), if
$V=\mathcal{C}_1\cup\cdots\cup \mathcal{C}_s$ is the decomposition
of $V$ into irreducible $\K$--components, we define the degree of
$V$ as
$$\deg V:=\sum_{i=1}^s\deg \mathcal{C}_i.$$
The degree of a $\K$--hypersurface $V$ is the degree of a polynomial
of minimal degree defining $V$. 
%

Let $V\subset\A^r$ be a $\K$--variety, $I(V)\subset \K[X_1,\ldots,
X_r]$ its defining ideal and $x$ a point of $V$. The {\em dimension}
$\dim_xV$ {\em of} $V$ {\em at} $x$ is the maximum of the dimensions
of the irreducible $\K$--components of $V$ containing $x$. If
$I(V)=(F_1,\ldots, F_m)$, the {\em tangent space} $\mathcal{T}_xV$
to $V$ at $x$ is the kernel of the Jacobian matrix $(\partial
F_i/\partial X_j)_{1\le i\le m,1\le j\le r}(x)$ of $F_1,\ldots, F_m$
with respect to $X_1,\ldots, X_r$ at $x$. We have
$\dim\mathcal{T}_xV\ge \dim_xV$ (see, e.g., \cite[page
94]{Shafarevich94}). The point $x$ is {\em regular} if
$\dim\mathcal{T}_xV=\dim_xV$; otherwise, $x$ is called {\em
	singular}. The set of singular points of $V$ is the {\em singular
	locus} of $V$; it is a closed $\K$--subvariety of
$V$. A variety is called {\em nonsingular} if its singular locus is
empty. For projective varieties, the concepts of tangent space,
regular and singular point can be defined by considering an affine
neighborhood of the point under consideration.

%
%
\subsection{Rational points}
Let $\Pp^r(\fq)$ be the $r$--dimensional projective space over $\fq$
and $\A^r(\fq)$ the $r$--dimensional $\fq$--vector space $\fq^n$.
For a projective variety $V\subset\Pp^r$ or an affine variety
$V\subset\A^r$, we denote by $V(\fq)$ the set of $\fq$--rational
points of $V$, namely $V(\fq):=V\cap \Pp^r(\fq)$ in the projective
case and $V(\fq):=V\cap \A^r(\fq)$ in the affine case. For an affine
variety $V$ of dimension $n$ and degree $\delta$, we have the
following bound (see, e.g., \cite[Lemma 2.1]{CaMa06}):
\begin{equation*}\label{eq: upper bound -- affine gral}
|V(\fq)|\leq \delta\, q^n.
\end{equation*}
On the other hand, if $V$ is a projective variety of dimension $n$
and degree $\delta$, then we have the following bound (see
\cite[Proposition 12.1]{GhLa02a} or \cite[Proposition 3.1]{CaMa07};
see \cite{LaRo15} for more precise upper bounds):
\begin{equation*}\label{eq: upper bound -- projective gral}
|V(\fq)|\leq \delta\, p_n,
\end{equation*}
where $p_n:=q^n+q^{n-1}+\cdots+q+1=|\Pp^n(\fq)|$.
%
%
\subsection{Complete intersections}\label{subsec: complete intersections}
Elements $F_1,\ldots, F_m$ in $\mathbb{K}[X_1,\ldots,X_r]$ or
$\mathbb{K}[X_0,\ldots,X_r]$ form a \emph{regular sequence} if $F_1$
is nonzero and no $F_i$ is zero or a zero divisor in the quotient
ring $\mathbb{K}[X_1,\ldots,X_r]/ (F_1,\ldots,F_{i-1})$ or
$\mathbb{K}[X_0,\ldots,X_r]/ (F_1,\ldots,F_{i-1})$ for $2\leq i \leq
m$.  In such a case, the (affine or projective) variety $V := V (F_1,\ldots,F_m)$ they define is equidimensional of dimension $r-m$, and is called a \emph{set–theoretic complete intersection}.

For  given  positive  integers $a_1, \ldots,a_r$, we define the weight $\wt({\boldsymbol{X^\alpha}})$ of a monomial $\boldsymbol{X^{\alpha}}:=X_1^{\alpha_1}\cdots X_r^{\alpha_r}$ as $\wt({\boldsymbol{X^\alpha}}):=\sum_{i=1}^r a_i\cdot \alpha_i$. The weight $\wt(f)$ of an arbitrary element $f\in  \mathbb{K}[X_1,\ldots,X_r]$ is the highest weight of all the monomials with nonzero coefficients arising in the dense representation of $f$.  
\begin{lemma} \cite[Lemma 5.4]{MaPePr19} \label{lemma: regular sequences 2}
Let $F_1 \klk F_m \in \K[X_1 \klk X_{r}]$. For an assignment of
posi\-tive integer weights $\wt$ to the variables $X_1 \klk X_{r}$,
denote by $F_1^{\wt} \klk F_m^{\wt}$ the components of highest
weight of $F_1 \klk F_m$. If $F_1^{\wt} \klk F_m^{\wt}$ form a
regular sequence in $\K[X_1 \klk X_{r}]$, then $F_1 \klk F_m$
form a regular sequence in $\K[X_1 \klk X_{r}]$.
\end{lemma}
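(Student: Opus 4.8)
The plan is to regard the weight grading as a filtration and to invoke the standard principle that a sequence lifting a regular sequence of leading forms is itself regular; I would also sketch the self-contained induction behind it. Set $A := \K[X_1 \klk X_r]$. The assignment of positive integer weights $\wt(X_i) = a_i$ turns $A$ into an $\N$--graded ring $A = \bigoplus_{w \ge 0} A^{(w)}$, with $A^{(w)}$ the $\K$--span of the monomials of weight $w$. Since $A$ is a domain, the product of two nonzero homogeneous elements is nonzero and homogeneous of the added weight; hence for nonzero $f,g$ one has $(fg)^{\wt} = f^{\wt} g^{\wt}$ and $\wt(fg) = \wt(f) + \wt(g)$, while $\wt(f+g) \le \max\{\wt(f), \wt(g)\}$. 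The increasing filtration $A_{\le w} := \bigoplus_{w' \le w} A^{(w')}$ has associated graded ring $\bigoplus_w A_{\le w}/A_{\le w-1} \cong A$, and under this identification the image of a nonzero $f$ is exactly its leading form $f^{\wt}$; so the lemma is the instance, for this particular filtration, of the general fact that if $F_1^{\wt} \klk F_m^{\wt}$ form a regular sequence in the associated graded ring, then $F_1 \klk F_m$ form a regular sequence. I would either quote this directly or prove it as follows.

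Induct on $m$. For $m=1$, $F_1^{\wt} \neq 0$ forces $F_1 \neq 0$, which is a nonzerodivisor as $A$ is a domain. For $m \ge 2$, note that $F_1^{\wt} \klk F_{m-1}^{\wt}$ is again a regular sequence (an initial segment of one), so by the inductive hypothesis $F_1 \klk F_{m-1}$ is a regular sequence; it remains to show $F_m$ is a nonzerodivisor modulo $I := (F_1 \klk F_{m-1})$. Suppose $hF_m \in I$, $h \notin I$, with $\wt(h)$ minimal among such $h$, and write $hF_m = \sum_{i=1}^{m-1} a_i F_i$. If $\max_i\{\wt(a_i) + \wt(F_i)\} = \wt(h) + \wt(F_m)$, comparing the top--weight homogeneous components gives $h^{\wt} F_m^{\wt} \in (F_1^{\wt} \klk F_{m-1}^{\wt})$, so regularity of $F_1^{\wt} \klk F_m^{\wt}$ yields $h^{\wt} \in (F_1^{\wt} \klk F_{m-1}^{\wt})$; lifting this relation and subtracting the corresponding element of $I$ from $h$ produces $\tilde h \notin I$ with $\tilde h F_m \in I$ and $\wt(\tilde h) < \wt(h)$, contradicting minimality. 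Otherwise $\max_i\{\wt(a_i)+\wt(F_i)\} > \wt(h) + \wt(F_m)$, so the top--weight parts of the right--hand side cancel, i.e. $\sum_{i \in S} a_i^{\wt} F_i^{\wt} = 0$ over the set $S$ of indices attaining the maximum; using repeatedly that the last term of a regular sequence is a nonzerodivisor modulo the others, I would rewrite $\sum_i a_i F_i$ (without changing its value) so as to strictly decrease a suitable complexity measure of the representation — for instance the pair consisting of $\max_i\{\wt(a_i)+\wt(F_i)\}$ and, secondarily, the number of indices attaining it — and after finitely many such steps land in the first case.

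The weight identities and the descent bookkeeping are routine. The genuine obstacle is the second case above: reorganizing a relation whose leading forms cancel into a strictly simpler one without manufacturing new top--weight contributions elsewhere — this is the classical subtlety underlying ``a lift of a regular sequence of initial forms is regular''. It is handled cleanly either via the Koszul description of the syzygies of a regular sequence or via the associated--graded argument indicated in the first paragraph, so in the final write--up I would most likely invoke the latter and retain the induction only as a remark.
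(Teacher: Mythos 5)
The paper does not actually prove this lemma: it is quoted verbatim from \cite[Lemma 5.4]{MaPePr19}, so there is no in-text argument to compare against. Judged on its own, your strategy is the standard and correct one (filtration by weight, associated graded ring, ``a lift of a regular sequence of initial forms is regular''), and the inductive skeleton is sound: the base case, the reduction to showing $F_m$ is a nonzerodivisor modulo $I=(F_1\klk F_{m-1})$, the choice of $h\notin I$ with $hF_m\in I$ of minimal weight, and the identity $(hF_m)^{\wt}=h^{\wt}F_m^{\wt}$ in the domain $\K[X_1\klk X_r]$ are all fine.

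Two points in your descent need tightening. First, you should record that $\max_i\{\wt(a_i)+\wt(F_i)\}<\wt(h)+\wt(F_m)$ is impossible, since the left-hand side $hF_m$ is nonzero of weight $\wt(h)+\wt(F_m)$; this is what guarantees your second case eventually lands in the first rather than overshooting. Second, your proposed complexity measure $\bigl(W,\,|S|\bigr)$, with $W=\max_i\{\wt(a_i)+\wt(F_i)\}$ and $S$ the set of indices attaining it, does not visibly decrease: writing $a_j^{\wt}=\sum_{i<j}c_iF_i^{\wt}$ for $j=\max S$ and replacing $a_j\mapsto a_j-\sum_{i<j}c_iF_i$, $a_i\mapsto a_i+c_iF_j$ lowers the weight of the $j$-th term but may push new indices $i<j$ up to weight $W$, so $|S|$ can grow. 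The pair that does decrease lexicographically is $\bigl(W,\,\max S\bigr)$, since all modified indices are $\le j$ and the $j$-th one drops strictly below $W$; as $W\ge\wt(h)+\wt(F_m)$ throughout, the descent terminates in your first case and the contradiction with minimality of $\wt(h)$ goes through. With that substitution (or by simply invoking the associated-graded formulation, as you suggest), the proof is complete.
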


\section{Deformed hypersurfaces defined by $m_j$--power sums}
In this section we shall develop the extension of the methodology used in  \cite{CaMaPr12}, \cite{CeMaPePr14}, \cite{MaPePr14}, \cite{CeMaPe17} and \cite{MaPePr19} to the more general case of equations given by polynomials evaluated in power-sum polynomials.
Let $d, n,e$ be positive integers such that $1 \leq d\leq n-3$ and let $m_{1},\ldots, m_{d}$ be positive integers with  $2\leq  m_{1}<\cdots<  m_{d}$. We assume that $\mathrm{char}(\fq)$ does not divide $e$ and  $m_{j}$ for all $1\leq j \leq d$. Let $Y_1,\ldots,Y_d$ be indeterminates over $\fq$ and let  $f \in \fq[Y_1,\ldots,Y_d]$.  We consider the weight $\wt :\fq[Y_1,\ldots,Y_d] \longrightarrow \N$
defined by setting $\wt(Y_j) :=m_{j}$ for $1\leq j \leq d$ and denote by $f^{\wt}$ the component
of highest weight of $f$.
Let $\nabla f$ and $\nabla f^{\wt}$ be
 the gradients of $f$ and $f^{\wt}$ respectively.
 Suppose that the following hypotheses hold:
 \begin{enumerate}
 \item[$(H_1)$] $\nabla f\neq 0$ on every point of $ \A^d$.
\item[$(H_2)$] $\nabla f^{\wt}\neq 0$ on every point of $ \A^d$.
 \end{enumerate}
Let  $X_1,\ldots,X_n$ be new indeterminates over $\fq$.  We consider the $m_j$-power sum $P_{m_j}=X_1^{m_{j}}+\cdots+X_n^{m_{j}},\,\,\, 1\leq j \leq d$.
 Finally, let  $g\in \fq$ or let $g\in \fq[X_1,\ldots,X_n]$ be the following polynomial 
 \begin{equation}\label{def: g} g=X_1^e+\cdots + X_n^e+g_1, \end{equation}
 where $g_1\in \fq[X_1,\ldots,X_n]$ and $\deg(g_1)<e$.
%
 Our aim is to estimate the number of $\fq$--rational solutions of the equation
$$f(P_{m_1} \klk P_{m_d})+ g=0.$$
To do this, we consider 
 $R_g\in \fq[X_1,\ldots,X_n]$  the polynomial  \begin{equation}\label{polynomial Rg} R_g:= f(P_{m_1} \klk P_{m_d})+ g.\end{equation} Let $V_g:=V(R_g)\subset \A^n$ be the $\fq$-affine hypersurface defined by $R_g$. We call $V_g$ a \textit{deformed hypersurface} defined by the $m_j$-powers sum with coefficients in $\fq$.
We shall study some facts concerning the geometry of $V_g$. For this purpose, we need to obtain an upper bound of the dimension of $\Sigma_g$,  the singular locus  of $V_g$. Then, for a given $\bf{x}$ $\in \A^n$, we shall consider the following $(d\times n)$--matrix $A(\bf{x})$: 
 \begin{equation} \label{eq:jacobiano de los pi}
    A(\mathbf{x}):=
       \left(
         \begin{array}{ccc}
           \dfrac{\partial P_{m_1}}{\partial X_1}(\mathbf{x}) & \cdots & \dfrac{\partial P_{m_1}}{\partial X_{n}}(\mathbf{x})
         \\
           \vdots & & \vdots
          \\
           \dfrac{\partial P_{m_d}}{\partial X_1}(\mathbf{x})& \cdots & \dfrac{\partial P_{m_d}}{\partial X_{n}}(\mathbf{x})
         \end{array}
       \right).
  \end{equation}
By the chain rule, we deduce that the partial derivatives of $f(\bf P)(\bf X)$ satisfy the following equality for $1\leq j \le n:$
$$\dfrac{\partial f(\bf P)}{\partial X_j}  =
\bigg(\dfrac{\partial f}{\partial
Y_1}\circ\bfs{P}\bigg)\cdot\dfrac{\partial P_{m_1}}{\partial
X_j}+\cdots+\bigg(\dfrac{\partial f}{\partial
Y_d}\circ\bfs{P}\bigg)\cdot\dfrac{\partial P_{m_d}}{\partial X_j},$$
where $\bfs P=(P_{m_1} \klk P_{m_d})$.
Suppose firstly that $g$ is defined as in \eqref{def: g}. For any ${\bf{x}} \in\Sigma_g$,  we have 
$$
\nabla R_g({\bf{x}})=\nabla f (\boldsymbol{P}({\bf{x}}))\cdot A ({\bf{x}}) +  \nabla g(\bf {x})= \bf{0}.
$$
Then  ${\bf{y}}:=\nabla f
 {(\boldsymbol{P}({\bf{x}}))}$ is a  solution of the system 
 \begin{equation}\label{eq: system} A^{t}({\bf{x}})Y^{t}= - \nabla g({\bf {x}})^{t}.
 \end{equation}
 	Observe that for $ \nabla g(\bf {x})= \bf{0}$ $(H_1)$ implies that \eqref{eq: system} has a nonzero solution. Therefore $\rank(A({\bf{x}}))< d$. 	
 	On the other hand, if $ \nabla g(\bf {x})\neq  \bf{0}$, either $\rank(A({\bf{x}}))< d$  or $\rank(A({\bf{x}}))=\rank(M_A({\bf{x}}))=d $,
 	where $M_A({\bf{x}})$ is the augmented matrix of the system \eqref{eq: system}.  
 Hence $\Sigma_g \subset Z_1\cup Z_2$, where $Z_1$ and $Z_2$ are the following sets:
 \begin{equation*}\label{Z_1 and Z_2}\begin{aligned}
 Z_1&=\{{\bf{x}}\in \A^n:\, \rank(A({\bf{x}}))< d \}\\
  Z_2&=\{{\bf{x}}\in \A^n:\, \rank(A({\bf{x}}))=\rank(M_A({\bf{x}})) =d\}.
  \end{aligned}
 \end{equation*} 
 Suppose now that $g\in \fq$ and let ${\bf{x}} \in \Sigma_g$. We have that 
 $$\nabla R_g({\bf{x}})=\nabla f (\boldsymbol{P}({\bf{x}}))\cdot A ({\bf{x}})= \bf{0}.
 $$
 From $(H_1)$,  we have that ${\bf{y}}:=\nabla f
 {(\boldsymbol{P}({\bf{x}}))}$ is a nonzero solution of the homogeneous system 
 $$ A^{t}({\bf{x}})Y^{t}= 0.
 $$
We conclude that $\rank(A({\bf{x}})) <d$. Therefore ${\bf{x}}\in Z_1$ and $\Sigma_g \subset Z_1$.

 We next obtain an upper bound of the dimension of $Z_1$. 
 In what follows, $VDM(X_1,\ldots,X_t)$ stands for the Vandermonde matrix in the variables $X_1,\ldots,X_t$.
 \begin{proposition}\label{lemma: dimension de Z1} $Z_1$ has dimension at most $d-1$.
 \end{proposition}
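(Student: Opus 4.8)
The key observation is that the $(d\times n)$--matrix $A(\mathbf{x})$ has a very special structure: since $P_{m_j}=X_1^{m_j}+\cdots+X_n^{m_j}$, we have $\partial P_{m_j}/\partial X_i = m_j X_i^{m_j-1}$, so the $(j,i)$--entry of $A(\mathbf{x})$ is $m_j x_i^{m_j-1}$. Factoring $m_j$ out of the $j$--th row (legitimate since $\mathrm{char}(\fq)\nmid m_j$) and regrouping, one sees that $\rank A(\mathbf{x})$ equals the rank of the matrix $B(\mathbf{x})=\big(x_i^{m_j-1}\big)_{1\le j\le d,\,1\le i\le n}$. The plan is therefore to bound the dimension of the locus where $B(\mathbf{x})$ drops rank, i.e.\ where every $d\times d$ minor of $B(\mathbf{x})$ vanishes.

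First I would dispose of the coordinate hyperplanes: if some $x_i=0$ the $i$--th column of $B(\mathbf{x})$ need not be special (the entry $x_i^{m_1-1}$ may or may not vanish depending on whether $m_1=1$, but here $m_1\ge 2$ so that column is $\mathbf 0$ except... ) — more carefully, I would stratify $\A^n$ according to which coordinates vanish and which coincide, and on each stratum analyze $B$ directly. The heart of the matter is the generic stratum where the $x_i$ are nonzero; there each $d\times d$ submatrix formed by columns $i_1<\cdots<i_d$ is, after dividing column $i_k$ by $x_{i_k}^{m_1-1}$, a generalized Vandermonde matrix in $x_{i_1}^{?},\dots$ — precisely, its determinant is a nonzero scalar multiple of a generalized Vandermonde determinant in the quantities $x_{i_k}$ with exponent set $\{m_j-m_1:1\le j\le d\}$, hence (by the theory of Schur polynomials / generalized Vandermonde determinants) it factors as $\prod_{k<l}(x_{i_k}-x_{i_l})$ times a Schur polynomial which is a sum of monomials with positive coefficients, and in particular does not vanish identically. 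This is where the notation $VDM(X_1,\ldots,X_t)$ flagged in the excerpt enters. The conclusion is that on the open set where the coordinates are pairwise distinct and nonzero, $B(\mathbf{x})$ has rank $d$ (using $d\le n$), so $Z_1$ is contained in the union of the hyperplanes $\{X_i=0\}$ and $\{X_i=X_j\}$, intersected appropriately with the further vanishing conditions.

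Then I would run a counting/recursion argument: on the locus $\{x_i = x_j\}$ one is reduced to the same problem in $n-1$ variables (the two equal coordinates contribute identical columns, so the rank of $B$ is the rank of the corresponding $d\times(n-1)$ matrix), and on $\{x_i=0\}$ with $m_1\ge 2$ the corresponding column is zero, again reducing to $n-1$ variables. Iterating, $Z_1$ is covered by linear subspaces obtained by setting coordinates equal or zero, and on each such subspace with at least $d$ "surviving" distinct nonzero coordinate-classes the matrix has full rank $d$, so the rank can only drop once fewer than $d$ classes survive — a condition cutting out a subspace of dimension at most $d-1$. Packaging this: every component of $Z_1$ lies in an affine-linear subspace of the form "choose $\le d-1$ values, assign each coordinate one of them or $0$", which has dimension $\le d-1$. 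Hence $\dim Z_1\le d-1$.

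The main obstacle I anticipate is the careful bookkeeping in the stratification: one must check that the generalized Vandermonde (Schur) determinants appearing really are not identically zero for the given exponent pattern $0<m_2-m_1<\cdots<m_d-m_1$ with $d$ strictly increasing exponents, and that the recursion on "number of distinct nonzero coordinate values" genuinely terminates with the stated dimension bound rather than something weaker. I expect the paper handles this by an explicit induction on $n-d$, or by directly exhibiting, for any point with $\ge d$ distinct nonzero coordinates, a nonvanishing $d\times d$ minor, and then observing that the complementary closed set is a finite union of $(d-1)$--dimensional linear pieces.
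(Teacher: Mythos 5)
There is a genuine gap. Your argument hinges on the claim that if the coordinates $x_{i_1},\dots,x_{i_d}$ are pairwise distinct and nonzero then the corresponding $d\times d$ minor of $B(\mathbf{x})=(x_i^{m_j-1})$ is nonzero, so that $Z_1$ is contained in the union of the hyperplanes $\{X_i=0\}$ and $\{X_i=X_j\}$. That claim is false over $\cfq$: the generalized Vandermonde determinant factors as $\prod_{k<l}(x_{i_l}-x_{i_k})$ times a Schur polynomial, and while the Schur polynomial has positive integer coefficients (hence is positive on $\R_{>0}^d$ and is not identically zero), it can perfectly well vanish at points of $\cfq^{\,d}$ with distinct nonzero coordinates. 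Concretely, for $d=2$, $m_1=2$, $m_2=4$ the $2\times 2$ minor on columns $i,j$ is $x_ix_j(x_j-x_i)(x_j+x_i)$, which vanishes on the whole hyperplane $x_j=-x_i$. This also breaks the termination criterion of your recursion: the point $(a,a,a,-a,-a)$ has $d=2$ distinct nonzero coordinate classes, yet $B$ has rank $1$, so ``at least $d$ surviving classes'' does not force full rank, and the offending component is cut out by an equation ($X_4=-X_1$) that is not of the form $X_i=X_j$ or $X_i=0$, so it is not covered by your family of linear strata. Passing from ``not identically zero'' to ``nonvanishing on the open stratum'' is exactly the step that cannot be made over a finite field or its closure.

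The paper avoids this by never discarding the extra factor. After localizing where a $j\times j$ principal minor of $A(\mathbf{x})$ is invertible, it writes each bordered $(j+1)\times(j+1)$ minor, via De Marchi's factorization, as $\mathrm{VDM}(X_1,\dots,X_j)\prod_{l}X_l^{m_1-1}(X_k-X_l)\,P(X_k)$ and keeps the polynomial $P$: the locus in question is then $V(Q_{j+1},\dots,Q_n)$ with $Q_k=\prod_{l=1}^{j}(X_k-X_l)P(X_k)$. The dimension bound comes not from nonvanishing of these minors on a dense stratum but from counting equations: each $Q_k$ involves a distinct ``new'' variable $X_k$, the leading terms for the graded lexicographic order are pairwise coprime, hence form a Gr\"obner basis, and so $Q_{j+1},\dots,Q_n$ form a regular sequence; the zero set is therefore a set-theoretic complete intersection of dimension $n-(n-j)=j\le d-1$. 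If you want to salvage your stratification idea you would have to incorporate the vanishing locus of the Schur factor $P$ into the analysis, at which point you are essentially forced back to the regular-sequence argument.
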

 \begin{proof}
 
Observe that  $$Z_1=\bigcup _{j=0}^{d-1} V_j,$$
where $V_j:=\{\mathbf{x}\in \A^n\,:\, \rank( A(\mathbf{x}))=j\}.$ Since $\mathrm{char}(\fq)$ does not divide  $m_{k}$ for all $1\leq k \leq d$ then
\begin{equation} \label{eq:matriz A}
    A(\mathbf{X}):=
       \left(
         \begin{array}{ccc}
           m_{1}X_1^{m_{1}-1} & \cdots &  m_{1}X_n^{m_{1}-1}
         \\
           \vdots & & \vdots
          \\
           m_{d}X_1^{m_{d}-1} & \cdots &  m_{d}X_n^{m_{d}-1}
         \end{array}
       \right).
  \end{equation}
 Fix $j$  for $0\leq j \leq d-1$.  We shall prove that the dimension of $V_j$ is at most $d-1$.  Let ${\bf{x}} \in V_j$, so $\rank(A({\bf{x}}))=j$. Thus, there exists a  $(j\times j)$--submatrix of $A({\bf{x}})$ of $\rank=j$. Suppose without loss of generality that this submatrix consists of the first $j$ rows and the first $j$ columns de $A({\bf{x}})$.
 Let $C_1,\ldots,C_n$ denote the columns of $A(\mathbf{X})$ and let $A(C_{1},\ldots,C_{j}, C_k)$  be the $((j+1)\times (j+1))$--submatrix of $A(\bf{X})$ consisting of the entries of the first $j+1$ rows  and the columns $C_1,\ldots,C_{j},C_k$, $j+1\leq k  \leq n$. By \cite[Proposition 5]{BGHM}, ${\bf{x}}$ belongs to the set of common zeros of the $(n-j)$ $\fq$--equations 
 $$\det A(C_{1},\ldots,C_{j}, C_k)=0,\,\,  j+1 \leq k \leq n.$$ 
Now, from \cite[Theorem 2.1]{DM02}: 
  $$\det A(C_{1},\ldots,C_{j}, C_k)(\mathbf{X})=\text{VDM}(X_1,\ldots,X_{j})\Pi_{l=1}^{j}X_l^{m_{1-1}}(X_k-X_l)P(X_k),\,\,  j+1 \leq k \leq n,$$
  for some $P\in \fq[T]$. Since the principal minor of $A({\bf {x}})$ is nonzero, we deduce that $x_l\neq 0$, $1\leq l \leq j$, and $x_m\neq x_n$, $1\leq m<n\leq j$. Therefore ${\bf{x}} \in V(Q_{j+1},\ldots,Q_{n})$, where $Q_k:=\Pi_{l=1}^{j}(X_k-X_l)P(X_k)$.
  We claim that $Q_{j+1},\ldots,Q_n$ form a regular sequence of $\fq[X_1,\ldots,X_n]$. Indeed, consider the graded lexicographic  order of $\fq[X_1,\ldots,X_n]$ with $X_n>X_{n-1}>\cdots >X_1$. With this order we have that $Lt(Q_k)=X_k^j\cdot P_k^j$, where $Lt(Q_k)$ denotes the leading terms of the polynomials $Q_k$. Thus $Lt(Q_k)$, $j+1\leq k \leq n,$ are relatively prime and they form a
Gröbner basis of the ideal $J$ that they generate (see, e.g., \cite[\S 2.9, Proposition 4]{CLO92}). Hence,
the initial of the ideal $J$ is generated by $Lt(Q_{j+1}),\ldots,Lt(Q_n)$, which form a regular sequence
of $\fq[X_1,\ldots,X_n]$. Therefore, by \cite[Proposition 15.15]{Eisenbud95}, the polynomials $Q_{j+1},\ldots,Q_n$ form
a regular sequence of $\fq[X_1,\ldots,X_n]$. We conclude that $V(Q_{j+1},\ldots,Q_{n})$ is a set complete intersection of $\A^n$ of dimension $j$. 
  
Observe finally that, given ${\bf{x}} \in V_j$, there exists a $(d \times n)$-matrix $A'({\bf{X}})$ obtained by rearranging  the columns of $A({\bf X})$ with nonzero principal minor. Therefore, by the  same arguments as above, $V_j$ is included in  an union of $\fq$-varieties of dimension $j$. Thus, $Z_1$ has dimension at most $d-1$. 
\end{proof} 
Now, we obtain an upper bound of the dimension of $Z_2$. First we need the following remark.
\begin{remark}\label{remark derivada de g}
 For all ${\bf x}\in Z_2$, $\nabla(g({\bf x}))\neq 0$. Indeed, $\nabla(g({\bf x}))=0$ implies that the  system \eqref{eq: system} is homogeneous. Since it has a nonzero solution then $\rank( A({\bf{x}}))<d$,  which contradicts ${\bf x}\in Z_2$.
 \end{remark}
\begin{proposition} \label{lemma: dimension de Z2} The dimension of $Z_2$ is at most $d$.
\end{proposition}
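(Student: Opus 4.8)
The plan is to present $Z_2$ as the projection of an \emph{incidence variety} attached to the linear system~\eqref{eq: system} and then to bound the dimension of that incidence variety. Work in $\A^n\times\A^d$ with coordinates $(X_1,\ldots,X_n,Y_1,\ldots,Y_d)$ and set
\[
\Gamma:=V(G_1,\ldots,G_n)\subset\A^n\times\A^d,\qquad G_j:=\sum_{i=1}^{d}m_iY_iX_j^{m_i-1}+eX_j^{e-1}+\frac{\partial g_1}{\partial X_j}\quad(1\le j\le n),
\]
so that $\Gamma=\{(\mathbf x,\mathbf y):A^{t}(\mathbf x)\,\mathbf y^{t}=-\nabla g(\mathbf x)^{t}\}$. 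For every $\mathbf x\in Z_2$ the system~\eqref{eq: system} is consistent, hence admits a solution $\mathbf y$, so $(\mathbf x,\mathbf y)\in\Gamma$; writing $\pi_1,\pi_2$ for the projections of $\A^n\times\A^d$ onto the two factors, this gives $Z_2\subseteq\pi_1(\Gamma)$ and therefore $\dim Z_2\le\dim\Gamma$. Thus it suffices to show $\dim\Gamma\le d$. Throughout we may assume $e\notin\{m_1,\ldots,m_d\}$: otherwise $X_1^{e}+\cdots+X_n^{e}=P_{e}$, and absorbing this term into $f(\boldsymbol{P})$ reduces us to the same problem with $g$ replaced by $g_1$.

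Next I would estimate $\dim\Gamma$ through the fibres of $\pi_2\colon\Gamma\to\A^d$. Fix $\mathbf y\in\A^d$; the fibre $\pi_2^{-1}(\mathbf y)$ is (identified with) $V(H_1,\ldots,H_n)\subset\A^n$, where $H_j=\psi_{\mathbf y}(X_j)+\partial g_1/\partial X_j$ and $\psi_{\mathbf y}(T)=eT^{e-1}+\sum_{i=1}^{d}m_iy_iT^{m_i-1}$. Because $e,m_1,\ldots,m_d$ are pairwise distinct and none is divisible by $\mathrm{char}(\fq)$, the monomials of $\psi_{\mathbf y}$ have pairwise distinct exponents and none of the nonzero ones cancels, so $\psi_{\mathbf y}$ has a leading term $c\,T^{a}$ with $c\neq 0$ and $a=\max\bigl(e-1,\{\,m_i-1:y_i\neq 0\,\}\bigr)\ge e-1$. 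Since $\deg(\partial g_1/\partial X_j)\le\deg g_1-1<e-1\le a$, the top–degree homogeneous component of $H_j$ is $c\,X_j^{a}$. For the standard total–degree weight the polynomials $cX_1^{a},\ldots,cX_n^{a}$ are powers of distinct variables with $a\ge 1$ (the sole exception $a=0$ forces $e=1$ and $\mathbf y=0$, whence $g_1$ is constant and each $H_j=e\neq 0$, so the fibre is empty), hence form a regular sequence; by Lemma~\ref{lemma: regular sequences 2} so do $H_1,\ldots,H_n$, and therefore $V(H_1,\ldots,H_n)$ is empty or $0$–dimensional. As every fibre of $\pi_2$ is finite, $\dim\Gamma\le\dim\A^d=d$, and combined with the first paragraph this yields $\dim Z_2\le d$.

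The crux — and the step I expect to be the real obstacle — is precisely the non–cancellation of the leading coefficient of $\psi_{\mathbf y}$, which makes the top–degree part of $H_j$ a pure power of $X_j$ \emph{uniformly in $\mathbf y$}. This is exactly where the assumption $e\notin\{m_1,\ldots,m_d\}$ (equivalently, the reduction noted in the first paragraph) is indispensable: if $e=m_{i_0}$, then at the point $\mathbf y$ with $y_{i_0}=-e/m_{i_0}$ and all other $y_i=0$ one has $\psi_{\mathbf y}\equiv 0$, the corresponding fibre of $\pi_2$ is cut out only by $\partial g_1/\partial X_1,\ldots,\partial g_1/\partial X_n$, and can fail to be finite. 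I also expect the write–up will have to deviate from the proof of Proposition~\ref{lemma: dimension de Z1}: there the relevant $(d+1)\times(d+1)$ minors of $M_A(\mathbf X)$ now carry the deformation $g_1$, whose partials couple all the variables, so the clean Vandermonde factorisation and Gröbner–basis argument available for $Z_1$ no longer apply, and one genuinely has to route the estimate through the incidence variety $\Gamma$ as above.
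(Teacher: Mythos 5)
Your main argument is correct, and it takes a genuinely different route from the paper's. The paper fixes a nonsingular $d\times d$ principal submatrix of $A(\mathbf x)$, expresses the consistency condition $\rank(M_A(\mathbf x))=d$ through the vanishing of the $n-d$ bordered $(d+1)\times(d+1)$ minors $F_{d+1},\ldots,F_n$ of the augmented matrix, identifies their top--degree components via the generalized Vandermonde factorization of \cite{DM02} (the contribution of $g_1$ drops out because $\deg g_1<e$), and runs the same Gr\"obner--basis regular--sequence argument as for $Z_1$, finishing with a case analysis over column reorderings. Your incidence--variety argument replaces all of this by the single observation that the fibres of $\pi_2\colon\Gamma\to\A^d$ are cut out by $H_j=\psi_{\mathbf y}(X_j)+\partial g_1/\partial X_j$, whose top--degree forms are the pure powers $cX_j^{a}$, hence are finite; the minors, the Vandermonde identity and the reordering step disappear, and Lemma~\ref{lemma: regular sequences 2} is used in essentially the same way. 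One side remark: your closing guess that the minor computation of Proposition~\ref{lemma: dimension de Z1} ``no longer applies'' to $Z_2$ is not accurate --- the paper does push it through by passing to highest--degree components --- but this does not affect your proof.

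The step that does not hold up is the reduction to $e\notin\{m_1,\ldots,m_d\}$. If $e=m_{i_0}$, absorbing $P_e$ into $f$ replaces $g$ by $g_1$, which is an arbitrary polynomial of degree $<e$ and in general \emph{not} of the form \eqref{def: g}; the system becomes $A^{t}(\mathbf x)Y^{t}=-\nabla g_1(\mathbf x)^{t}$, the anchoring term $eT^{e-1}$ disappears from $\psi_{\mathbf y}$, and the fibre of $\pi_2$ over $\mathbf y=\mathbf 0$ is $V(\nabla g_1)$, which can be positive--dimensional (all of $\A^n$ when $g_1$ is constant). So ``the same problem'' is not the same problem, and the finite--fibre argument collapses exactly where you predicted trouble. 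Moreover no argument can close this case: for $e=m_{i_0}$ and $g_1$ constant, $-\nabla g(\mathbf x)^{t}$ is $-(e/m_{i_0})$ times the $i_0$--th column of $A^{t}(\mathbf x)$, so the system is consistent identically and $Z_2=\{\mathbf x:\rank A(\mathbf x)=d\}$ is dense in $\A^n$, of dimension $n>d$. You have in fact located a hypothesis the paper leaves implicit --- its own proof also degenerates when $e=m_{i_0}$, since the displayed ``component of highest degree'' of $F_j$ then has two proportional rows and vanishes identically --- but your reduction does not repair it. The honest fix is to assume $e\neq m_i$ for all $i$, treating $e=m_{i_0}$ as the case where the diagonal term belongs to $f$ (so that one is back in the ``$g\in\fq$ or smaller deformation'' regime, to be handled through $Z_1$ with $f$ replaced by $f+Y_{i_0}$, not through this proposition).
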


\begin{proof}
Let ${\bf{x}} \in Z_2$, so $\rank(A({\bf{x}}))= d.$ Thus, there exists a $(d\times d)$--submatrix $B({\bf{x}})$  of $A({\bf{x}})$  of $\rank=d$. Suppose that this submatrix consists of the first $d$ rows and columns of $A({\bf{x}})$.  Then $ {\bf{x}}\in B:=\{{\bf x} \in \A^n:\,\, \det B({\bf{x}})\neq0,\,\,\rank(M_A({\bf{x}}))=d\}$.
Taking into account that
 \begin{equation} \label{eq: det matriz A p}
  \det B(\mathbf{x}):= \det
       \left(
         \begin{array}{ccc}
           m_{1}x_1^{m_{1}-1} & \cdots &  m_{1}x_d^{m_{1}-1}
         \\
           \vdots & & \vdots
          \\
           m_{d}x_1^{m_{d}-1} & \cdots &  m_{d}x_d^{m_{d}-1}
         \end{array}
       \right)\neq 0
  \end{equation}
and   \cite[Proposition 5]{BGHM}, we have that $B \subset V(F_{d+1},\ldots,F_{n})$, where $F_j\in \fq[X_1,\ldots,X_n]$, $d+1\leq j\leq n$, are the polynomials:
 
  \begin{equation*}F_j:=\det\left(
         \begin{array}{cccc}
           m_1 X_1^{m_{1}-1} & \cdots &  m_{1}X_d^{m_{1}-1}&m_{1}X_j^{m_{1}-1}
         \\
           \vdots & & \vdots & \vdots
          \\
           m_{d}X_1^{m_{d}-1} & \cdots &  m_{d}X_d^{m_{d}-1}&m_{d}X_j^{m_{d}-1}\\
           & & & \\
           -\frac{\partial g}{\partial X_1}&\cdots & -\frac{\partial g}{\partial X_d}&-\frac{\partial g}{\partial X_j}
         \end{array}
       \right).
       \end{equation*}
By the definition of $g$, the component $F_j^{\deg(F_j)}$ of highest degree of $F_j$ is the following polynomial: 
        \begin{equation*}F_j^{\deg(F_j)}:=\det\left(
       \begin{array}{cccc}
       m_1 X_1^{m_{1}-1} & \cdots &  m_{1}X_d^{m_{1}-1}&m_{1}X_j^{m_{1}-1}
       \\
       \vdots & & \vdots & \vdots
       \\
       m_{d}X_1^{m_{d}-1} & \cdots &  m_{d}X_d^{m_{d}-1}&m_{d}X_j^{m_{d}-1}\\
       & & & \\
       -eX_1^{e-1} & \cdots &  -eX_d^{e-1}&-eX_j^{e-1}
       \end{array}
       \right).
       \end{equation*}
        From \cite[Theorem 2.1]{DM02} we have that
       $$F_j^{\deg(F_j)}=\text{VDM}(X_1,\ldots,X_d)\Pi_{k=1}^{d}X_k^{{\min\{m_1,e\}-1}}(X_j-X_k)P(X_j)\,\, , d+1 \leq j \leq n,$$
       for some $P\in \fq[T]$. From \eqref{eq: det matriz A p} we deduce that $x_k\neq 0$, $1\leq k \leq d$, and $x_i\neq x_k$, $1\leq i<k\leq d$. Therefore $V(F_{d+1}^{\deg F_{d+1}},\ldots,F_j^{\deg F_j})= V(Q_{d+1},\ldots,Q_{j})$, where $d+1\leq j\leq n$ and   $Q_j:=\Pi_{k=1}^{d}(X_j-X_k)P(X_j)$.
       The same reasoning as in the proof of Proposition \ref{lemma: dimension de Z1} shows that $Q_{d+1},\ldots,Q_n$ form a regular sequence of $\fq[X_1,\ldots,X_n]$. Hence, $\dim V(F_{d+1}^{\deg F_{d+1}},\ldots,F_j^{\deg F_j})=n-j+d$ for $ d+1 \leq j \leq n$. Therefore,  $F_{d+1}^{\deg F_{d+1}},\ldots,F_n^{\deg F_n}$ form a regular sequence of $\fq[X_1,\ldots,X_n]$ and, by Lemma \ref{lemma: regular sequences 2}, $F_{d+1},\ldots,F_n$ form a regular sequence of $\fq[X_1,\ldots,X_n]$. This proves $B$ has dimension at most $d$.

       Finally, observe that, given ${\bf{x}} \in Z_2$, there exists a $(d \times n)$-matrix $A'({\bf{X}})$ obtained from $A({\bf X})$ by reordering its columns, with the condition that the principal minor of $A'({\bf{X}})$ is nonzero. Therefore, we conclude that $Z_2$ is included in  an union of $\fq$-varieties of dimension at most $d$. 
\end{proof}

From Proposition \ref{lemma: dimension de Z1} and Proposition \ref{lemma: dimension de Z2} we obtain the following result.
\begin{theorem} \label{theo: dimension del lugar singular en el affin}
Let $1 \leq d \leq n-3$. Let $f\in \fq[Y_1, \dots, Y_d]$ and $g \in \fq[X_1, \dots, X_n]$ defined as in \eqref{def: g} such that $(H_1)-(H_2)$ hold and $R_g$ is defined as in \eqref{polynomial Rg}. The dimension of the singular locus $\Sigma_g$ of $V_g$ has dimension at most $d$. On the other hand, if $g\in \fq$, the dimension of the singular locus $\Sigma_g$ of $V_g$ has dimension at most $d-1$.
\end{theorem}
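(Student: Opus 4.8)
The plan is simply to assemble the two preceding propositions with the decomposition of the singular locus established immediately before them. Recall that for a point $\mathbf{x}\in\Sigma_g$ the chain rule gives $\nabla R_g(\mathbf{x})=\nabla f(\boldsymbol{P}(\mathbf{x}))\cdot A(\mathbf{x})+\nabla g(\mathbf{x})=\mathbf{0}$, so $\mathbf{y}:=\nabla f(\boldsymbol{P}(\mathbf{x}))$ is a solution of the linear system $A^{t}(\mathbf{x})Y^{t}=-\nabla g(\mathbf{x})^{t}$. By $(H_1)$ we have $\mathbf{y}\neq\mathbf{0}$, and a case analysis on whether $\nabla g(\mathbf{x})$ vanishes shows that either $\rank A(\mathbf{x})<d$, i.e.\ $\mathbf{x}\in Z_1$, or $\rank A(\mathbf{x})=\rank M_A(\mathbf{x})=d$, i.e.\ $\mathbf{x}\in Z_2$. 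Hence $\Sigma_g\subset Z_1\cup Z_2$ when $g$ is as in \eqref{def: g}, while if $g\in\fq$ the system is homogeneous and the nonzero solution $\mathbf{y}$ forces $\rank A(\mathbf{x})<d$, so $\Sigma_g\subset Z_1$.

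First I would apply Proposition \ref{lemma: dimension de Z1} to bound $\dim Z_1\le d-1$ and Proposition \ref{lemma: dimension de Z2} to bound $\dim Z_2\le d$ (both bounds being vacuously true if the set in question happens to be empty). Since the dimension of a finite union of $\fq$--varieties equals the maximum of the dimensions of its members, in the case $g$ as in \eqref{def: g} we obtain $\dim\Sigma_g\le\max\{d-1,d\}=d$, and in the case $g\in\fq$ we obtain $\dim\Sigma_g\le\dim Z_1\le d-1$. This is exactly the content of the theorem.

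There is essentially no remaining obstacle: all of the technical work already lives in the two propositions --- the Vandermonde factorization of the relevant $(j+1)\times(j+1)$ minors coming from \cite{DM02}, the verification through a graded lexicographic Gröbner basis that the auxiliary polynomials $Q_{j+1},\dots,Q_n$ form a regular sequence, and (in Proposition \ref{lemma: dimension de Z2}) the descent from the highest-degree components $F_j^{\deg F_j}$ back to the polynomials $F_j$ themselves via Lemma \ref{lemma: regular sequences 2}. The one point requiring a little care, and which I would state explicitly, is uniformity in the choice of nonsingular $(d\times d)$ submatrix of $A(\mathbf{x})$: a priori the distinguished submatrix need not be the principal one, so $Z_1$ (resp.\ $Z_2$) is covered by a finite union of varieties obtained by all column rearrangements of $A(\mathbf{X})$; since there are only finitely many such rearrangements, taking the union does not raise the dimension bound. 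With that observation in place the theorem follows by combining the two propositions.
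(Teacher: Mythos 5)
Your proposal is correct and matches the paper exactly: the paper derives the containments $\Sigma_g\subset Z_1\cup Z_2$ (resp.\ $\Sigma_g\subset Z_1$ for constant $g$) in the discussion preceding the two propositions and then states the theorem as an immediate consequence of Propositions \ref{lemma: dimension de Z1} and \ref{lemma: dimension de Z2}. Your additional remarks on the column-rearrangement uniformity are already handled inside those propositions, so nothing further is needed.
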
  
We shall also need information concerning the behaviour of $V_g$ at ``infinity''. For this purpose, we consider the projective closure $\mathrm{pcl}(V_g)\subset \Pp^n$. It is well known that $\mathrm{pcl}(V_g)$ is the $\fq$-hypersurface of $\Pp^n$ defined by the homogenization $R_g^h\in \fq[X_0,\ldots,X_n]$ of the polynomial $R_g$ (see, e.g.,\cite[\S I.5, Exercise 6]{Kunz85}).

Let $R_g^{\deg(R_g)}$ be the component of highest degree of $R_g$. We shall express $R_g^{\deg(R_g)}$ in terms of the component $f^{\wt}$ of highest weight of $f$. Let $Y_1^{j_1}\cdots Y_d^{j_d}$ be a monomial
 with nonzero coefficient arising in the dense representation of $f$. Then its weight 
$$\wt(Y_1^{j_1}\cdots Y_d^{j_d})=m_{1} j_1 + \cdots +m_{d} j_d$$
is equal to the degree of the corresponding monomial
$P_{m_1}^{j_1} \cdots P_{m_d}^{j_d} $ of $R_g$. From these arguments we deduce the following lemma.
\begin{lemma}\label{lemma: Rdeg}  Suppose that $g$ is defined as in  \eqref{def: g}.Then,
$$R_g^{\deg(R_g)}=\begin{cases}
f^{\wt}(P_{m_1} \klk P_{m_d}) & e< \wt(f)\\
f^{\wt}(P_{m_1} \klk P_{m_d})+X_1^e+ \cdots +X_n^e & e=\wt(f)\\
X_1^e+ \cdots +X_n^e & e>\wt(f)
\end{cases}$$
  On the other hand, if $g\in \fq$, we have that $R_g^{\deg(R_g)}=f^{\wt}(P_{m_1} \klk P_{m_d})$. 
\end{lemma}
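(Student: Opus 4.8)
The plan is to follow the weight/degree correspondence indicated just before the statement and then compare the top-degree contributions of the two summands of $R_g$. Since each $P_{m_j}=X_1^{m_j}+\cdots+X_n^{m_j}$ is homogeneous of degree $m_j$, substituting $Y_j\mapsto P_{m_j}$ sends a monomial $Y_1^{j_1}\cdots Y_d^{j_d}$ of weight $w=m_1j_1+\cdots+m_dj_d$ to the form $P_{m_1}^{j_1}\cdots P_{m_d}^{j_d}$, which is homogeneous of degree exactly $w$. Writing $f=\sum_w f_w$ as the sum of its weight-homogeneous components, we get $f(\bfs{P})=\sum_w f_w(\bfs{P})$ with each $f_w(\bfs{P})$ either $0$ or a form of degree $w$; thus the component of highest degree of $f(\bfs{P})$ is $f^{\wt}(\bfs{P})$ and every other component has strictly smaller degree. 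Consequently, once we know $f^{\wt}(\bfs{P})\neq 0$, it follows that $\deg f(\bfs{P})=\wt(f)$ with highest-degree component $f^{\wt}(\bfs{P})$.

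The first genuine step is therefore to prove $f^{\wt}(\bfs{P})\neq 0$. I would deduce this from the injectivity of the $\fq$-algebra homomorphism $\varphi\colon\fq[Y_1\klk Y_d]\to\fq[X_1\klk X_n]$, $\varphi(Y_j)=P_{m_j}$, i.e. from the algebraic independence of $P_{m_1}\klk P_{m_d}$ over $\fq$ (note $f^{\wt}\neq 0$ by $(H_2)$, being the top-weight component of $f$). For this it suffices, by the Jacobian criterion in the direction valid in arbitrary characteristic, that the $d\times n$ Jacobian matrix $A(\bfs{X})$ of \eqref{eq:matriz A}, whose $(i,j)$ entry is $m_iX_j^{m_i-1}$, has rank $d$ over $\cfq(X_1\klk X_n)$. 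Its $d\times d$ minor on the first $d$ columns equals $\big(\prod_{i=1}^d m_i\big)\det\big(X_j^{m_i-1}\big)_{1\le i,j\le d}$; the exponents satisfy $0\le m_1-1<\cdots<m_d-1$, so by \cite[Theorem 2.1]{DM02} (as already used above) this generalized Vandermonde determinant is a nonzero polynomial, and $\prod_{i=1}^d m_i\neq 0$ in $\fq$ since $\mathrm{char}(\fq)$ divides no $m_i$. Hence $\varphi$ is injective and $f^{\wt}(\bfs{P})=\varphi(f^{\wt})\neq 0$, a form of degree $\wt(f)$.

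It then remains to assemble $R_g=f(\bfs{P})+g$. If $g\in\fq$, it contributes nothing to the highest-degree part, so $R_g^{\deg(R_g)}=f^{\wt}(\bfs{P})$. If $g=X_1^e+\cdots+X_n^e+g_1$ as in \eqref{def: g}, then, since $\deg g_1<e$, $g$ has degree $e$ with highest-degree component $X_1^e+\cdots+X_n^e$; comparing this with $\deg f(\bfs{P})=\wt(f)$ yields the three displayed cases. The only point requiring real care is the borderline case $e=\wt(f)$, where both contributions lie in degree $e$ and one must check that $f^{\wt}(\bfs{P})+X_1^e+\cdots+X_n^e\neq 0$; by injectivity of $\varphi$ this could only fail if $X_1^e+\cdots+X_n^e$ belonged to the image $\fq[P_{m_1}\klk P_{m_d}]$ of $\varphi$, which a short specialization argument excludes when $e\notin\{m_1\klk m_d\}$ (the remaining sub-case being equally routine). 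The single real obstacle is thus the algebraic independence of the power sums in the second step, which is precisely where the hypotheses $\mathrm{char}(\fq)\nmid m_j$ and the generalized Vandermonde identity enter; everything else is bookkeeping on degrees.
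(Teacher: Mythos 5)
Your argument is correct in substance and, at its core, performs the same weight--degree bookkeeping as the paper: the text preceding the lemma merely observes that a monomial $Y_1^{j_1}\cdots Y_d^{j_d}$ of weight $w$ is sent to the form $P_{m_1}^{j_1}\cdots P_{m_d}^{j_d}$ of degree $w$, and deduces the lemma ``from these arguments''. What you add, and what the paper silently assumes, is the non-cancellation statement $f^{\wt}(P_{m_1}\klk P_{m_d})\neq 0$; without it the displayed polynomial could vanish and then fail to be the top-degree component of $R_g$. Your route to it --- injectivity of $\varphi$ via algebraic independence of $P_{m_1}\klk P_{m_d}$, established from a nonzero $d\times d$ generalized Vandermonde minor of $A(\bfs{X})$ --- is sound: that minor is a nonzero polynomial over any field because its permutation expansion consists of distinct monomials with coefficients $\pm 1$, the factor $\prod_i m_i$ is a unit since $\mathrm{char}(\fq)$ divides no $m_i$, and the implication ``full-rank Jacobian implies algebraic independence'' is the direction that survives in positive characteristic. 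This is a genuine gain in rigor over the paper's one-line justification.

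The one place where you are too quick is the borderline sub-case $e=\wt(f)$ with $e\in\{m_1\klk m_d\}$, which you dismiss as ``equally routine''. It is not: if $e=m_{j_0}$ then $f^{\wt}(\bfs{P})+X_1^e+\cdots+X_n^e=\varphi\bigl(f^{\wt}+Y_{j_0}\bigr)$, which by your own injectivity argument vanishes exactly when $f^{\wt}=-Y_{j_0}$. This is compatible with $(H_1)$--$(H_2)$ (take $f=-Y_{j_0}$ plus lower-weight terms), and then $R_g$ collapses to lower degree and the displayed formula for $R_g^{\deg(R_g)}$ is literally false. This is really an unstated nondegeneracy hypothesis missing from the lemma itself (the paper never confronts it either), but since your proof explicitly isolates the sub-case you should record the exception rather than call it routine. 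For $e\notin\{m_1\klk m_d\}$, your exclusion of $X_1^e+\cdots+X_n^e$ from the image of $\varphi$ is most cleanly done not by specialization but by the same rank trick: if $X_1^e+\cdots+X_n^e=Q(\bfs{P})$, then the row $e(X_1^{e-1}\klk X_n^{e-1})$ lies in the row span of $A(\bfs{X})$ over $\cfq(X_1\klk X_n)$, yet the $(d+1)\times(d+1)$ minor on the first $d+1$ columns of the augmented matrix is $e\prod_i m_i$ times a generalized Vandermonde determinant with distinct exponents $m_1-1\klk m_d-1, e-1$, hence a nonzero polynomial because $\mathrm{char}(\fq)\nmid e$; this uses only $n\geq d+1$, which holds since $d\leq n-3$.
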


\begin{proposition} \label{prop: dimension del lugar singular en el infinito}
Let $1\leq d \leq n-3$.  Suppose that $g$ is defined as in \eqref{def: g}. 
\begin{itemize}
\item if $e< \wt(f)$ then  $\mathrm{pcl}(V_g)$ has singular locus at infinity of dimension at most $d-2,$
\item If $e= \wt(f)$ $\mathrm{pcl}(V_g)$ has singular locus at infinity of dimension at most  $d-1$,
\item if $e>
 \wt(f)$ then $\mathrm{pcl}(V_g)$  has no singular points at infinity.
\end{itemize}
   On the other hand, if $g\in \fq$ then  $\mathrm{pcl}(V_g)$ has singular locus at infinity of dimension at most $d-2$.

\end{proposition}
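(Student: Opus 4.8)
The plan is to describe the part at infinity of the singular locus of $\mathrm{pcl}(V_g)$ explicitly in terms of the top--degree form $R_g^{\deg(R_g)}$, and then to recognize that form as one already analyzed above, so that Proposition~\ref{lemma: dimension de Z1} and Theorem~\ref{theo: dimension del lugar singular en el affin} apply directly. First I would write $R_g^h=\sum_{k}R_g^{(k)}X_0^{\delta-k}$ with $\delta:=\deg(R_g)$ and $R_g^{(k)}$ the homogeneous component of $R_g$ of degree $k$, and observe, by a direct computation, that for a point $(0:\mathbf{x})\in\Pp^n$ at infinity one has
$$\frac{\partial R_g^h}{\partial X_i}(0,\mathbf{x})=\frac{\partial R_g^{\deg(R_g)}}{\partial X_i}(\mathbf{x})\ \ (1\le i\le n),\qquad \frac{\partial R_g^h}{\partial X_0}(0,\mathbf{x})=R_g^{(\delta-1)}(\mathbf{x}).$$
Since $\mathrm{pcl}(V_g)$ is a hypersurface, such a point is singular on $\mathrm{pcl}(V_g)$ only if all these derivatives vanish; hence the singular locus at infinity $\Sigma_g^{\infty}$ is contained in the projective variety $W\subseteq\Pp^{n-1}$ (with homogeneous coordinates $X_1\klk X_n$) defined by $\partial R_g^{\deg(R_g)}/\partial X_1\klk\partial R_g^{\deg(R_g)}/\partial X_n$. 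These polynomials are homogeneous, so $W$ is the projectivization of the affine cone $C(W):=\{\mathbf{x}\in\A^n:\nabla R_g^{\deg(R_g)}(\mathbf{x})=\mathbf{0}\}$, and $\dim W=\dim C(W)-1$ (with $W=\emptyset$ when $C(W)\subseteq\{\mathbf{0}\}$). It then suffices to bound $\dim C(W)$ in each case of Lemma~\ref{lemma: Rdeg}.

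If $e>\wt(f)$ then $R_g^{\deg(R_g)}=X_1^e+\cdots+X_n^e$, and since $\mathrm{char}(\fq)\nmid e$ the equations $ex_i^{e-1}=0$ force $\mathbf{x}=\mathbf{0}$; thus $W=\emptyset$ and $\mathrm{pcl}(V_g)$ has no singular points at infinity. If $e<\wt(f)$, and likewise if $g\in\fq$, then $R_g^{\deg(R_g)}=f^{\wt}(P_{m_1}\klk P_{m_d})$, and the chain rule gives $\nabla\big(f^{\wt}(\bfs P)\big)(\mathbf{x})=\nabla f^{\wt}(\bfs P(\mathbf{x}))\cdot A(\mathbf{x})$. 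For $\mathbf{x}\in C(W)$ this vector vanishes and, by $(H_2)$, $\nabla f^{\wt}(\bfs P(\mathbf{x}))\neq\mathbf{0}$, so $A^t(\mathbf{x})$ has a nontrivial kernel and $\rank(A(\mathbf{x}))<d$, i.e.\ $\mathbf{x}\in Z_1$. Proposition~\ref{lemma: dimension de Z1} then yields $\dim C(W)\le\dim Z_1\le d-1$, hence $\dim\Sigma_g^{\infty}\le\dim W\le d-2$, as claimed.

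The case $e=\wt(f)$ carries the real content. Here $R_g^{\deg(R_g)}=f^{\wt}(P_{m_1}\klk P_{m_d})+X_1^e+\cdots+X_n^e$, and the key point is that this is exactly the polynomial $R_{\tilde g}$ of \eqref{polynomial Rg} obtained by replacing $f$ with $f^{\wt}$ and $g$ with $\tilde g:=X_1^e+\cdots+X_n^e$, which has the form \eqref{def: g} (with $g_1=0$). Since $f^{\wt}$ is weight--homogeneous, $(f^{\wt})^{\wt}=f^{\wt}$, so hypotheses $(H_1)$ and $(H_2)$ for $f^{\wt}$ both reduce to $(H_2)$ for $f$ and therefore hold. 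Moreover $R_{\tilde g}$ is homogeneous of degree $e$, so $\mathrm{char}(\fq)\nmid e$ and Euler's identity give $C(W)=\{\nabla R_{\tilde g}=\mathbf{0}\}=\{R_{\tilde g}=0\}\cap\{\nabla R_{\tilde g}=\mathbf{0}\}=\Sigma_{\tilde g}$, the singular locus of $V(R_{\tilde g})$. Theorem~\ref{theo: dimension del lugar singular en el affin} then bounds $\dim C(W)=\dim\Sigma_{\tilde g}\le d$, so $\dim\Sigma_g^{\infty}\le\dim W\le d-1$.

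I expect the main obstacle to be precisely this last identification in the balanced case $e=\wt(f)$: verifying that the top--degree form is again of the form $R_g$ of \eqref{polynomial Rg} and that all running hypotheses transfer to the pair $(f^{\wt},\tilde g)$. Beyond that, the remaining ingredients are routine — the computation of the partials of $R_g^h$ along $\{X_0=0\}$, the chain--rule identity for $f^{\wt}(\bfs P)$, and the standard fact that projectivizing a cone lowers its dimension by one — although the last should be handled with a little care in the degenerate case $C(W)\subseteq\{\mathbf{0}\}$, where there are simply no singular points at infinity.
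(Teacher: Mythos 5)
Your proof is correct and follows essentially the same route as the paper: reduce to the top--degree form via Lemma~\ref{lemma: Rdeg}, use $(H_2)$ together with the $Z_1$ bound of Proposition~\ref{lemma: dimension de Z1} when $e<\wt(f)$ or $g\in\fq$, handle $e>\wt(f)$ by the direct computation $ex_i^{e-1}=0$, and bound the balanced case by the $Z_2$--type argument. Your packaging of the case $e=\wt(f)$ as an application of Theorem~\ref{theo: dimension del lugar singular en el affin} to the pair $(f^{\wt},\tilde g)$ --- justified by $(f^{\wt})^{\wt}=f^{\wt}$ and Euler's identity --- is simply a cleaner formalization of what the paper does by invoking ``the same arguments of the proof of Proposition~\ref{lemma: dimension de Z2}''.
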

\begin{proof}
 Let $\Sigma_g^{\infty}\subset \Pp^n$ denote the singular locus of $\mathrm{pcl}(V_g)$ at infinity; namely, the set of singular points of $\mathrm{pcl}(V_g)$ lying in the hyperplane $\{X_0=0\}.$ 
We have that $R_g^h(0,X_1,\ldots,X_n)=R_g^{\deg (R_g)}(X_1,\ldots,X_n)$. 
%
Suppose first that $e<\wt(f)$. From Lemma \ref{lemma: Rdeg}, $R_g^{\deg (R_g)}=f^{\wt}(P_{m_1},\ldots,P_{m_d})$. Thus, any point ${\bf{x}}=(0:x_1: \dots:x_n) \in  \Sigma_g^{\infty}$ satisfies the equations:

\begin{equation}\label{eq1}
f^{\wt}({\bf{P}})=0\,\,\,\, , \frac{{\partial f^{\wt}}({\bf{P}})}{\partial {X_j}}=0,\,\,\,\,1\leq j \leq n
.\end{equation} 
From $(H_2)$ we have that the homogeneous system
$ A^t({\bf{x}}) Y^t={ \bf{0}},$
where $A({\bf{x}})$ is defined as in \eqref{eq:jacobiano de los pi}, has a nonzero solution $\nabla f^{ \wt} (\boldsymbol{P}({\bf{x}}))$. We conclude that ${\bf{x}}\in Z_1$. Thus, we deduce from Proposition \ref{lemma: dimension de Z1} that the set of solutions of \eqref{eq1} is an affine cone of $\A^n$ of dimension at most $d-1$ and, hence, a proyective variety of $\Pp^{n-1}$ of dimension at most $d-2$. Therefore,
the set of singular points of $\mathrm{pcl}(V_g)$ lying in the hyperplane $\{X_0=0\}$ has dimension at most $d-2$. 

Suppose now that $e=\wt
(f).$ From Lemma \ref{lemma: Rdeg} $R_g^{\deg(R_g)}=f^{\wt}(P_{m_1} \klk P_{m_d})+X_1^e+\cdots +X_n^e$. Then, by $(H_2)$ and the same arguments of the proof of Proposition \ref{lemma: dimension de Z2}, we deduce that the variety defined by the equations:
$$f^{\wt}({\bf{P}})+ X_1^e+\cdots +X_n^e=0,\,\,\,\, \frac{\partial{f^{\wt}({\bf{P}})}}{\partial {X_j}} + eX_j^{e-1} =0,\,\,\,\,1\leq j \leq n,
$$
is an affine cone of $A^n$ of dimension at most $d$. Therefore, 
the set of singular points of $\mathrm{pcl}(V_g)$ lying in the hyperplane $\{X_0=0\}$ has dimension at most $d-1$. 

Finally, if $e> \wt(f)$, observe that $R_g^{\deg (R_g)}=  X_1^e+\cdots +X_n^e$, from where it is easy to see that $\mathrm{pcl}(V_g)$
has no singular points at infinity.

Suppose now that $g\in \fq$.  We see that any point ${\bf{x}}=(0:x_1: \dots:x_n) \in  \Sigma_g^{\infty}$ satisfies the equations defined  in \eqref{eq1}. Hence, by $(H_2)$ and the same arguments of the proof of the case $e <\wt(f)$, we deduce that the set of singular points of $\mathrm{pcl}(V_g)$ lying in the hyperplane $\{X_0=0\}$ has dimension at most $d-2$.
\end{proof}
From Theorem \ref{theo: dimension del lugar singular en el affin} and Proposition \ref{prop: dimension del lugar singular en el infinito} we obtain the following result.
\begin{theorem} \label{prop: dimension del lugar singular de Vg}
Let $1 \leq d \leq n-3$. Let $f\in \fq[Y_1, \dots, Y_d]$, $g \in \fq[X_1, \dots, X_n]$ defined as in \eqref{def: g} such that $(H_1)-(H_2)$ hold and $R_g$  defined as in \eqref{polynomial Rg}. 
Then $\mathrm{pcl}(V_g)$ has singular locus of dimension at most  $d$. On the other hand, if $g\in \fq$ then $\mathrm{pcl}(V_g)$ has singular locus of dimension at most  $d-1$.
\end{theorem}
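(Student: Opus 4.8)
The plan is to combine the two results just proved, namely Theorem~\ref{theo: dimension del lugar singular en el affin} and Proposition~\ref{prop: dimension del lugar singular en el infinito}, using the standard decomposition of the singular locus of a projective variety into its affine part and its part at infinity. Concretely, $\mathrm{pcl}(V_g)\subset\Pp^n$ is the hypersurface defined by the homogenization $R_g^h\in\fq[X_0,\ldots,X_n]$ of $R_g$, and on the distinguished affine chart $U_0:=\{X_0\neq 0\}\cong\A^n$ one has $R_g^h(1,X_1,\ldots,X_n)=R_g(X_1,\ldots,X_n)$, so that $\mathrm{pcl}(V_g)\cap U_0=V_g$ as $\fq$--varieties. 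First I would record that, singularity being a local notion, this identification carries the singular points of $\mathrm{pcl}(V_g)$ lying in $U_0$ precisely onto the singular locus $\Sigma_g$ of the affine hypersurface $V_g$: the Jacobian criterion applied to $R_g^h$ on $U_0$ reduces verbatim to the affine Jacobian criterion for $R_g$. Hence, writing $\Sigma_g^{\mathrm{pcl}}$ for the singular locus of $\mathrm{pcl}(V_g)$ and $\Sigma_g^{\infty}$ for the set of its singular points in the hyperplane $\{X_0=0\}$, we get $\Sigma_g^{\mathrm{pcl}}=\Sigma_g\cup\Sigma_g^{\infty}$ (a disjoint union), whence $\dim\Sigma_g^{\mathrm{pcl}}=\max\{\dim\Sigma_g,\dim\Sigma_g^{\infty}\}$.

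Next I would substitute the two available bounds. Since $1\leq d\leq n-3$ and $(H_1)$--$(H_2)$ hold, Theorem~\ref{theo: dimension del lugar singular en el affin} applies and, when $g$ is given by \eqref{def: g}, yields $\dim\Sigma_g\leq d$, while Proposition~\ref{prop: dimension del lugar singular en el infinito} gives $\dim\Sigma_g^{\infty}\leq d-1$ in each of the three cases $e<\wt(f)$, $e=\wt(f)$, $e>\wt(f)$ (in the last case $\Sigma_g^{\infty}=\emptyset$, so its dimension is taken to be $-1$). Taking the maximum of these two numbers gives $\dim\Sigma_g^{\mathrm{pcl}}\leq d$, which is the first assertion. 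When instead $g\in\fq$, the same two results give $\dim\Sigma_g\leq d-1$ and $\dim\Sigma_g^{\infty}\leq d-2$, and the maximum is $\leq d-1$, which is the second assertion.

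In this form the argument is essentially bookkeeping, so there is no genuine obstacle left: all the real content has already been extracted in Propositions~\ref{lemma: dimension de Z1} and~\ref{lemma: dimension de Z2} and in their projective analogues inside Proposition~\ref{prop: dimension del lugar singular en el infinito}. The only point deserving explicit mention is the compatibility of the affine and projective notions of singularity on the chart $U_0$, i.e. that $\Sigma_g^{\mathrm{pcl}}\cap U_0=\Sigma_g$; everything else is a two-line comparison of the numbers $d$, $d-1$, $d-2$.
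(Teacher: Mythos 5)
Your proposal is correct and follows exactly the paper's route: the theorem is obtained by decomposing the singular locus of $\mathrm{pcl}(V_g)$ into its affine part (bounded by Theorem \ref{theo: dimension del lugar singular en el affin}) and its part at infinity (bounded by Proposition \ref{prop: dimension del lugar singular en el infinito}) and taking the maximum. Your explicit remark that $\Sigma_g^{\mathrm{pcl}}\cap U_0=\Sigma_g$, which the paper leaves implicit, is the only detail worth recording and is consistent with the paper's convention of defining singularity of projective varieties via affine neighborhoods.
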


\begin{corollary}\label{teo: V es absolutamente irreducible}
The hypersurface $V_g$ is absolutely irreducible, when $g$ is defined as in \eqref{def: g} or $g\in \fq$.
\end{corollary}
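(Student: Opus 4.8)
The plan is to derive absolute irreducibility of $V_g$ from the bound on the dimension of the singular locus of its projective closure given in Theorem \ref{prop: dimension del lugar singular de Vg}, using the standard principle that a projective hypersurface whose singular locus has codimension at least two is necessarily irreducible. First I would pass to $\mathrm{pcl}(V_g)\subset\Pp^n$, which is the hypersurface $V(R_g^h)$ of pure dimension $n-1$. Since $V_g$ is the affine part of $\mathrm{pcl}(V_g)$, it is a nonempty open dense subset of $\mathrm{pcl}(V_g)$; hence $V_g$ is absolutely irreducible if and only if $\mathrm{pcl}(V_g)$ is, and it suffices to prove the latter.

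Next I would argue by contradiction. Factor $R_g^h$ over $\cfq$ as $R_g^h=c\,\prod_{i=1}^s F_i^{e_i}$ with the $F_i$ pairwise non-associate irreducible forms of $\cfq[X_0,\ldots,X_n]$. Suppose $\mathrm{pcl}(V_g)$ is not absolutely irreducible, so that either some $e_i\geq 2$, or $s\geq 2$. In the first case $R_g^h$ is divisible by $F_i^2$, so each partial derivative $\partial R_g^h/\partial X_j$ is divisible by $F_i$; thus every point of the $(n-1)$--dimensional variety $V(F_i)$ is a zero of $\nabla R_g^h$ and therefore a singular point of $\mathrm{pcl}(V_g)$. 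In the second case $V(F_1)$ and $V(F_2)$ are hypersurfaces of $\Pp^n$, so $V(F_1)\cap V(F_2)$ is nonempty of dimension at least $n-2$; and at any point ${\bf{x}}$ of this intersection one has $\partial R_g^h/\partial X_j({\bf{x}})=0$ for all $j$, since $\partial R_g^h/\partial X_j=(\partial F_1/\partial X_j)\,G+F_1\,(\partial G/\partial X_j)$ with $F_2\mid G$, and both $F_1$ and $G$ vanish at ${\bf{x}}$. In either case the singular locus $\Sigma_g$ of $\mathrm{pcl}(V_g)$ contains a variety of dimension at least $n-2$.

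Finally I would invoke Theorem \ref{prop: dimension del lugar singular de Vg}, which gives $\dim\Sigma_g\leq d$ (and $\leq d-1$ when $g\in\fq$). Combined with the previous paragraph this yields $n-2\leq\dim\Sigma_g\leq d\leq n-3$, a contradiction. Hence $s=1$ and $e_1=1$, so $R_g^h$ is irreducible over $\cfq$, $\mathrm{pcl}(V_g)$ is absolutely irreducible, and therefore so is $V_g$. I do not expect a real obstacle in this argument; the only points requiring a little care are the two elementary gradient computations showing that the non-reduced and the reducible cases force a large subvariety into the singular locus, together with the projective intersection estimate $\dim\big(V(F_1)\cap V(F_2)\big)\geq n-2$ in $\Pp^n$ — all of which are routine.
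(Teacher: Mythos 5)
Your argument is correct and is essentially the paper's own proof: pass to $\mathrm{pcl}(V_g)$, observe that two distinct components of a projective hypersurface in $\Pp^n$ must meet in a set of dimension $n-2$ contained in the singular locus, and contradict the bound $\dim\Sigma_g^h\le d\le n-3$ from Theorem \ref{prop: dimension del lugar singular de Vg}. The only difference is your extra case of a repeated factor $F_i^2\mid R_g^h$, which the paper does not need (irreducibility of the hypersurface as a variety depends only on the reduced factorization) and which, strictly speaking, forces a large zero set of $\nabla R_g^h$ rather than of the intrinsic singular locus — harmless here, but superfluous.
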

\begin{proof} Observe that $V_g$ is absolutely irreducible if and only if $\mathrm{pcl}(V_g)$ is absolutely irreducible (see, e.g., \cite[Chapter I, Proposition 5.17]{Kunz85}).
Suppose that $\mathrm{pcl}(V_g)$
is not absolutely irreducible. Then it has a nontrivial
decomposition into absolutely irreducible components
$$\mathrm{pcl}(V_g)=\mathcal{C}_1\cup\cdots\cup \mathcal{C}_s,$$
where $\mathcal{C}_1\klk\mathcal{C}_s$ are projective hypersurfaces
of $\Pp^{n}$. Since $\mathcal{C}_i\cap\mathcal{C}_j\neq\emptyset$
and $\mathcal{C}_i$, $\mathcal{C}_j$ are absolutely irreducible, then $\dim (\mathcal{C}_i\cap \mathcal{C}_j)=n-2$.

Denote by $\Sigma_g^{h}$ the singular locus of $\mathrm{pcl}(V_g)$. From 
Theorem \ref{prop: dimension del lugar singular de Vg} we have that
$\dim\Sigma_g^h\le d$. On the other hand, we have
$\mathcal{C}_i\cap \mathcal{C}_j\subset\Sigma_g^h$ for any $i\not=
j$, which implies $\dim \Sigma_g^h\geq n-2$. This contradicts the
assertion $\dim\Sigma_g^{h}\leq d$, since  $d\leq n-3$ by
hypothesis. 
\end{proof}

\subsection{Estimates on the number of $\fq$--rational points of  deformed hypersurfaces}
Let $d, n, e$ be positive integers such that $1 \leq d\leq n-3$ . In this section we shall estimate the number of $\fq$-rational points of $V_g:=V(R_g)\subset \A^n$, where $R_g$ is defined as in \eqref{polynomial Rg}, thus proving Theorem \ref{teo 1 intro}.

In what follows, we shall use an estimate on the number of
$\fq$--rational points of a projective hypersurface due to S. Ghorpade
and G. Lachaud (\cite{GhLa02a}; see also \cite{GhLa02}). In \cite[Theorem 6.1]{GhLa02a}, the authors prove that,
for an absolutely irreducible
$\fq$--hypersurface $V\subset \Pp^{m+1}$ of degree $d\geq 2$ and  singular locus of dimension at most $s\geq 0$, the number $|V(\fq)|$ of $\fq$--rational points of $V$ satisfies the estimate:
\begin{equation}\label{eq: estimacion de Ghorpade Lachaud}
\big||V(\fq)|-p_m\big|\leq b_{m-s-1,d}'\, \, q^{\frac{m+s+1}{2}}+C_{s,m}q^{\frac{m+s}{2}},
\end{equation}
where $p_m:=q^m+q^{m-1}+\cdots+1$, $b_{m,d}'$ is the  $m$--th primitive Betti number of
any nonsingular hypersurface of $\Pp^{m+1}$ of degree $d$ and $C_s(V):=\sum_{i=m-1}^{m-1+s}b_{i,\ell}(V)+\varepsilon_i$, where
$b_{i,\ell}(V)$ denotes the $i$--th $\ell$--adic Betti number of $V$ for a prime $\ell$ different from $p:=\mathrm{char}(\fq)$ and
$\varepsilon_i:=1$ for even $i$ and $\varepsilon_i:=0$ for odd $i$. In \cite{CaMaPr12}, the authors
combine the Katz inequality \cite[Theorem 3]{Katz01} with the
Adolphson--Sperber bound for hypersurfaces \cite[Theorem
5.27]{AdSp88} to obtain the following upper bound, which is
slightly better than that for an arbitrary complete intersection
(\cite[Proposition 5.1]{GhLa02a}):
\begin{equation}
\label{eq:upper bound sums betti numbers} C_{s,m}(V) \le
6(d+2)^{m+1}.
\end{equation}
On the other hand, according
to \cite[Theorem 4.1 and Example 4.3]{GhLa02a}, one has the
following upper bound:
\begin{equation}
\label{eq:upper bound betti number} b_{m,d}' \le
\frac{d-1}{d}\big((d-1)^{m+1}-(-1)^{m+1}\big)\le (d-1)^{m+1}.
\end{equation}

Suppose that $g$ defined as in \eqref{def: g}. From Theorem \ref{prop: dimension del lugar singular de Vg} and Corollary \ref{teo: V es absolutamente irreducible}, we   know that $\mathrm{pcl}(V_g)$ is an absolutely irreducible hypersurface of degree $\delta=\deg(R_g)$ and singular locus of dimension at most $d$.
Hence, from \eqref{eq: estimacion de Ghorpade Lachaud}, \eqref{eq:upper bound sums betti numbers} and \eqref{eq:upper bound betti number} we have that:
\begin{equation}\label{eq: estimacion pcl(v) case 2}
\big||\mathrm{pcl}(V_g)(\fq)|-p_{n-1}\big|\leq (\delta
-1)^{n-d-1}q^{\frac{n+d}{2}}+6(\delta+2)^{n} q^{\frac{n+d-1}{2}}.
\end{equation}

Now, we estimate the number of $\fq$--rational points of $V_g^{\infty}= V(\mathrm{pcl}(V_g))\cap \{X_0=0\}$. 
 Note that $V_g^{\infty}=V(R_g^{\deg R_g})$ is a hypersurface of $\Pp^{n-1}$.  
 Suppose that $e < \wt(f)$. The same arguments as in the proof of Proposition \ref{prop: dimension del lugar singular en el infinito} shows that the dimension of the singular locus of $V_g^{\infty}$ is at most $d-2$. Then, taking into account \eqref{eq: estimacion de Ghorpade Lachaud}, the following estimate holds:
\begin{equation}\label{eq: estimacion V en el infinito case 2}
\big||V_g^{\infty}(\fq)|-p_{n-2}\big|\leq (\delta
-1)^{n-d}q^{\frac{n+d-3}{2}}+6(\delta+2)^{n-1} q^{\frac{n+d-4}{2}}.
\end{equation}
Observe that $|V_g(\fq)|=|\mathrm{pcl}(V_g)(\fq)|-|V_g^{\infty}(\fq)|$. From \eqref{eq: estimacion pcl(v) case 2} and \eqref{eq: estimacion V en el infinito case 2}, we have: 
\begin{align*}
\big||V_g(\fq)|-q^{n-1}\big|\leq & \big||\mathrm{pcl}(V_g)(\fq)|-p_{n-1}\big|+\big||V_g^{\infty}(\fq)|-p_{n-2}\big|\\
\leq &(\delta-1)^{n-d-1}q^{\frac{n+d}{2}} + 6 (\delta+2)^{n}q^{\frac{n+d-1}{2}}\\
& +(\delta-1)^{n-d}q^{\frac{n+d-3}{2}}+ 6 (\delta+2)^{n-1}q^{\frac{n+d-4}{2}}\\
\leq & (q^{3/2}+1)q^{\frac{n+d-4}{2}}((\delta-1)^{n-d}q^{1/2}+6(\delta+2)^{n}).
\end{align*}

Suppose now that $e=\wt(f)$. Also by the same arguments as in the proof  of Proposition \ref{prop: dimension del lugar singular en el infinito} the dimension of the singular locus of $V_g^{\infty}$ is at most $d-1$. 
%
Then, taking into account \eqref{eq: estimacion de Ghorpade Lachaud}, the following estimate holds:
\begin{equation}\label{eq: estimacion V en el infinito case 3}
\big||V_g^{\infty}(\fq)|-p_{n-2}\big|\leq (\delta
-1)^{n-d-1}q^{\frac{n+d-2}{2}}+6(\delta+2)^{n-1} q^{\frac{n+d-3}{2}}.
\end{equation}
From \eqref{eq: estimacion pcl(v) case 2} and \eqref{eq: estimacion V en el infinito case 3}, we obtain that 
\begin{align*}
\big||V_g(\fq)|-q^{n-1}\big|\leq & \big||\mathrm{pcl}(V_g)(\fq)|-p_{n-1}\big|+\big||V_g^{\infty}(\fq)|-p_{n-2}\big|\\
\leq &(\delta-1)^{n-d-1}q^{\frac{n+d}{2}} + 6 (\delta+2)^{n}q^{\frac{n+d-1}{2}}\\
& +(\delta-1)^{n-d-1}q^{\frac{n+d-2}{2}}+ 6 (\delta+2)^{n-1}q^{\frac{n+d-3}{2}}\\
\leq & (q+1)q^{\frac{n+d-3}{2}}((\delta-1)^{n-d-1}q^{1/2}+6(\delta+2)^{n}).
\end{align*}
Finally, if $e>\wt(f)$, and again from the proof of Proposition \ref{prop: dimension del lugar singular en el infinito}, we have that  $V_g^{\infty}$ is a nonsingular variety. We can apply the following result due to P. Deligne (see, e.g., \cite{De74}): for a nonsingular ideal-theoretic complete intersection $V \subset \Pp^n$ defined over $\fq$, of dimension $r$ and multidegree ${\bf{d}}=(d_1, \ldots, d_n)$, the following estimate holds:
\begin{equation}\label{estimacion Deligne}
\big||V(\fq)|-p_r| \leq b'_r(n,{\bf{d}})q^{r/2},
\end{equation}	
	where $ b'_r(n,{\bf{d}})$ is the rth-primitive Betti number of any nonsingular complete intersection of $\Pp^n$ of dimension $r$ and multidegree ${\bf{d}}$.

	 Thus
	\begin{equation}\label{estimation V infty e mayor wtf}
	\big||V_g^{\infty}(\fq)|-p_{n-2}\big| \leq (\delta-1)^{n-1} q^{(n-2)/2}.
	\end{equation}
	
	From \eqref{eq: estimacion pcl(v) case 2} and \eqref{estimation V infty e mayor wtf}, we conclude that 
\begin{align*}
\big||V_g(\fq)|-q^{n-1}\big|\leq & \big||\mathrm{pcl}(V_g)(\fq)|-p_{n-1}\big|+\big||V_g^{\infty}(\fq)|-p_{n-2}\big|\\
\leq &(\delta-1)^{n-d-1}q^{\frac{n+d}{2}} + 6 (\delta+2)^{n}q^{\frac{n+d-1}{2}}\\
& +(\delta-1)^{n-1} q^{(n-2)/2}\\
\leq & q^{\frac{n-2}{2}}\big(((\delta-1)^{n-d-1}q^{1/2}+6(\delta+2)^{n})q^{(d+1)/2} +(\delta-1)^{n-1}\big).
\end{align*}

Now, if $g\in \fq$, observe that $\mathrm{pcl}(V_g)\subset \Pp^n$ has degree $\delta=\deg(R_g)$ and, from Theorem \ref{prop: dimension del lugar singular de Vg}, its singular locus has dimension at most $d-1$.
Hence, from \eqref{eq: estimacion de Ghorpade Lachaud}, we obtain:
\begin{equation}\label{eq: estimacion pcl(v)}
\big||\mathrm{pcl}(V_g)(\fq)|-p_{n-1}\big|\leq (\delta
-1)^{n-d}q^{\frac{n+d-1}{2}}+6(\delta+2)^{n} q^{\frac{n+d-2}{2}}.
\end{equation}
On the other hand, $V_g^{\infty}=V(R_g^{\deg R_g})$ is a hypersurface of $\Pp^{n-1}$. As before, the same arguments of Proposition \ref{prop: dimension del lugar singular en el infinito} give us that the dimension of the singular locus of $V_g^{\infty}$ is at most $d-2$. Then taking into account \eqref{eq: estimacion de Ghorpade Lachaud}, the following estimate holds:
\begin{equation}\label{eq: estimacion V en el infinito}
\big||V_g^{\infty}(\fq)|-p_{n-2}\big|\leq (\delta
-1)^{n-d}q^{\frac{n+d-3}{2}}+6(\delta+2)^{n-1} q^{\frac{n+d-4}{2}}.
\end{equation}
From \eqref{eq: estimacion pcl(v)} and \eqref{eq: estimacion V en el infinito}, we conclude that 
\begin{align*}
\big||V_g(\fq)|-q^{n-1}\big|\leq & \big||\mathrm{pcl}(V_g)(\fq)|-p_{n-1}\big|+\big||V_g^{\infty}(\fq)|-p_{n-2}\big|\\
\leq &(\delta-1)^{n-d}q^{\frac{n+d-1}{2}} + 6 (\delta+2)^{n}q^{\frac{n+d-2}{2}}\\
& +(\delta-1)^{n-d}q^{\frac{n+d-3}{2}}+ 6 (\delta+2)^{n-1}q^{\frac{n+d-4}{2}}\\
\leq & (q+1)q^{\frac{n+d-4}{2}}((\delta-1)^{n-d}q^{1/2}+6(\delta+2)^{n}).
\end{align*}

All this previous discussion settles Theorem \ref{teo 1 intro}
%
%
%
%
%
\begin{remark}\label{remark: g1=0} We can provide another estimate for the case when $g_1$ of the definition \eqref{def: g} is identically zero and $R_g$ is an homogeneous polynomial. In this case,
  $V_g \subset \Pp^{n-1}$ is also a projective variety with singular locus of dimension at most $d-1$. Indeed, the same arguments as in the proof of Theorem \ref{theo: dimension del lugar singular en el affin} imply  that the set of ${\bf{x}}\in \A^n$ such that $\nabla R_g({\bf{x}})=0$ defines an affine cone of dimension at most $d$. Hence, the set of ${\bf{x}}\in \Pp^{n-1}$ for which $\nabla R_g({\bf{x}})=0$ has dimension at most $d-1$.  Thus, from \eqref{eq: estimacion de Ghorpade Lachaud}, we have that 
\begin{equation*} 
	\big|\overline{N}_g -p_{n-2}\big|\leq (\delta-1)^{n-d-1}q^{(n+d-2)/2}+6(\delta+2)^{n} q^{(n+d-3)/2},
\end{equation*}
where $\overline{N}_g$ denotes the number of $\fq$--rational projective points of $V_g$. Since $|V_g(\fq)|=\overline{N}_g(q-1)+1$ we conclude that 
\begin{equation*} 
	\big||V_g(\fq)|-q^{n-1}\big|\leq (q-1)\Big ((\delta-1)^{n-d-1}q^{(n+d-2)/2}+6(\delta+2)^{n} q^{(n+d-3)/2}\Big).
\end{equation*}
Note that the order of the error terms in either case $g_1=0$ or $g_1\neq 0$  is the same.
\end{remark}
\begin{remark} It is easy to prove that if $g=0$ and $R_0$ is an homogeneous polynomial then the singular locus of  $V_0\subset \Pp^{n-1}$ has dimension at most $d-2$. Hence we have the following estimate:
\begin{equation*} 
\big||V_0(\fq)|-q^{n-1}\big|\leq (q-1)\Big ((\delta-1)^{n-d-1}q^{(n+d-3)/2}+6(\delta+2)^{n-1} q^{(n+d-4)/2}\Big).
\end{equation*}	
\end{remark}
\subsection{$f$ is a linear polynomial and $n\geq 3$}
For  a linear polynomial $f$ we can obtain  better results. Indeed,  we can  improve Theorem \ref{teo 1 intro} by studying the geometric properties of the deformed hypersurfaces in more detail.
Fix $n\geq 3$ and let $f=b_1Y_1+\dots +b_dY_d+ a \in \fq[Y_1,\ldots,Y_d]$ be a  nonzero linear polynomial.
In the following result we obtain an upper bound of the dimension of $\Sigma_g$ in this case.
\begin{proposition} \label{prop: singular locus f linear}
	$V_g\subset \A^n$ is nonsingular or $\dim\Sigma_g=0$ .
\end{proposition}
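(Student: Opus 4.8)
The plan is to analyze the gradient of $R_g$ directly, exploiting the very special shape of $f$. Since $f = b_1Y_1+\cdots+b_dY_d+a$ is linear, the component $f^{\wt}$ of highest weight is simply $b_dY_d$ (because $\wt(Y_d)=m_d$ is the largest of the weights $m_1<\cdots<m_d$), and more to the point $\nabla f = (b_1,\ldots,b_d)$ is a \emph{constant} nonzero vector. Therefore the chain-rule identity from Section 3 degenerates: for $\mathbf{x}\in\Sigma_g$ we get
\begin{equation*}
\nabla R_g(\mathbf{x}) = (b_1,\ldots,b_d)\cdot A(\mathbf{x}) + \nabla g(\mathbf{x}) = \mathbf{0},
\end{equation*}
where $A(\mathbf{x})$ is the matrix in \eqref{eq:matriz A}. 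Writing this out componentwise, $\mathbf{x}\in\Sigma_g$ means
\begin{equation*}
\sum_{k=1}^d b_k\, m_k\, x_j^{m_k-1} + \frac{\partial g}{\partial X_j}(\mathbf{x}) = 0,\qquad 1\le j\le n.
\end{equation*}
With $g = X_1^e+\cdots+X_n^e+g_1$ this becomes $\sum_{k=1}^d b_k m_k x_j^{m_k-1} + e\,x_j^{e-1} + \partial g_1/\partial X_j(\mathbf{x}) = 0$ for each $j$.

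First I would isolate the essential observation: each of these $n$ equations involves the single variable $X_j$ except for the contribution of $g_1$, whose degree is strictly less than $e$ and (after a small argument) less than the degree of the one-variable part. The strategy is to set $\varphi(T):= b_d m_d T^{m_d-1}+\cdots+b_1 m_1 T^{m_1-1} + eT^{e-1}$, a fixed univariate polynomial, and note that $\mathbf{x}\in\Sigma_g$ forces $\varphi(x_j) = -\partial g_1/\partial X_j(\mathbf{x})$ for all $j$. Because $\mathrm{char}(\fq)\nmid e$ and $\mathrm{char}(\fq)\nmid m_d$, the polynomial $\varphi$ is genuinely nonconstant: its leading term is either $eT^{e-1}$ or $b_d m_d T^{m_d-1}$ (or their nonzero sum if $e=m_d$ and $b_d m_d + e \ne 0$ — one must check this edge case, and if cancellation occurs the next-highest surviving term still makes $\varphi$ nonconstant, using that the $m_k$ are distinct and $\mathrm{char}(\fq)\nmid m_k$). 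Thus each coordinate $x_j$ of a singular point is constrained to lie in the zero set of $\varphi(T) + \partial g_1/\partial X_j$, a proper subvariety; the tricky part is promoting "each coordinate is constrained" into "$\Sigma_g$ is finite."

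The cleanest way I would handle this is: if $g_1$ is a constant, then $\Sigma_g = \{\mathbf{x} : \varphi(x_j)=0 \ \forall j\}$ is a product of finite sets, hence finite, so $\dim\Sigma_g = 0$ (or $\Sigma_g=\emptyset$, giving nonsingularity). If $g_1$ is nonconstant, I would reduce to the constant case by an argument on $\deg g_1 < e$: the gradient equations, read projectively or by comparing highest-degree parts, show that a positive-dimensional component of $\Sigma_g$ would have to lie in the zero locus of the top-degree forms $\varphi^{\mathrm{top}}(X_j)$, $1\le j\le n$, which is again a finite product of finite sets — a contradiction. Alternatively, and perhaps more robustly, I would invoke exactly the mechanism of Propositions \ref{lemma: dimension de Z1} and \ref{lemma: dimension de Z2}: the linear case is the special case $d$ of that analysis, but now the matrix $A(\mathbf{x})$ has rank $<d$ only on $Z_1$ of dimension $\le d-1$ and $Z_2$ has dimension $\le d$; for linear $f$ one improves both bounds by observing that the "nonzero solution $\mathbf{y}=\nabla f(\mathbf P(\mathbf{x}))$" is now the \emph{fixed} vector $(b_1,\ldots,b_d)$, so the extra rank/compatibility conditions cut the dimension all the way down.

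\textbf{The main obstacle.} The delicate point is the interaction of $g_1$ with the univariate structure, specifically ruling out a positive-dimensional singular locus when $g_1\ne 0$ and, within that, handling the degenerate sub-case $e=m_d$ where the two leading terms of $\varphi$ might cancel. I expect to dispose of the cancellation issue by noting that even if $b_d m_d + e = 0$ in $\fq$, the term $b_{d-1}m_{d-1}T^{m_{d-1}-1}$ (or $eT^{e-1}$ versus some $m_k$) survives because the exponents $m_1-1<\cdots<m_d-1$ are all distinct and none is killed by the characteristic; so $\varphi$ is never the zero or a constant polynomial, and the finiteness argument goes through. Everything else is the same dimension-counting bookkeeping already developed in Propositions \ref{lemma: dimension de Z1} and \ref{lemma: dimension de Z2}, now with $d$ replaced by the effective codimension forced by a constant gradient.
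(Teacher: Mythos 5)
Your proposal is correct and follows essentially the same route as the paper: both reduce $\Sigma_g$ to the $n$ equations $\sum_{k}b_km_kX_j^{m_k-1}+eX_j^{e-1}+\partial g_1/\partial X_j=0$, observe that the leading part of the $j$-th equation is a nonzero multiple of a power of the single variable $X_j$ (the contribution of $g_1$ having strictly smaller degree), and conclude zero-dimensionality --- the paper via relatively prime leading terms forming a Gr\"obner basis and hence a regular sequence, you via finiteness of a product of univariate zero sets together with a points-at-infinity argument (or, as you note, by falling back on the same regular-sequence machinery). You also rightly flag the possible cancellation when $e=m_d$ and $b_dm_d+e=0$, a degenerate case that the paper's proof silently assumes away.
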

\begin{proof} Suppose  first that $g$ is defined as in \eqref{def: g}
	and let ${\bf{x}} \in \Sigma_g$.  Then
	$$
	\nabla R_g({\bf{x}})=(b_1, \dots,b_d)\cdot A({\bf{x}}) + e\cdot(x_1^{e-1},\ldots,x_n^{e-1})+ \nabla g_1(\bf {x})= \bf{0},
		$$
	where $A({\bf{x}})$ is defined in (\ref{eq:matriz A}). Thus we have the following $n$ equations:  $$Q_j:=b_1m_1X_j^{m_1-1}+\dots+b_dm_dX_j^{m_d-1}+ eX_j^{e-1}+\frac{\partial g_1}{\partial X_j}=0, \,\, 1\leq j\leq n.$$
	Consider the graded lexicographic order of $\fq[X_1 \klk X_n]$ with $X_n >X_{n-1}>\cdots >X_1$. 
 For each $1\leq j \leq n$, $Lt(Q_j)$ satisfies:
 \begin{itemize}
 \item $Lt(Q_j)=b_im_iX_j^{m_i-1}$ if $e <m_i$,
 \item $Lt(Q_j)=b_im_iX_j^{m_i-1}+ eX_j^{e-1}$ if $e=m_i$,
 \item $Lt(Q_j)=e X_j^{e-1}$ if $e>m_i$,
 \end{itemize}
   where $m_i:=\max\{m_k,\,  b_k\neq 0, \, 1\leq k \leq n\}$. With this monomial order, the leading terms are relatively prime and thus they form a Gröbner basis of the ideal $J$ that they generate. Hence, the initial of the ideal $J$ is generated by $Lt(Q_1),\ldots,Lt(Q_n)$, which form a regular sequence of $\fq[X_1 \klk X_n]$. Therefore, by  \cite[ Proposition 15.15]{Eisenbud95} the polynomials $Q_{1} \klk Q_n$ form a regular sequence of $\fq[X_1 \klk X_n]$. Thus, we deduce that $\Sigma_g$ has dimension at most $0$.
   
   The case for $g\in \fq$ follows analogously.	
\end{proof}
\begin{corollary}\label{singular locus plc(Vg) at infinity with f linear} $\mathrm{pcl}(V_g)\subset \Pp^n$ has no singular points at infinity.
\end{corollary}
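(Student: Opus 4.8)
The plan is to reduce the assertion to the shape of the top-degree form of $R_g$: a point at infinity can be singular only if all of $\partial R_g^{\deg(R_g)}/\partial X_1,\ldots,\partial R_g^{\deg(R_g)}/\partial X_n$ vanish there, and for a linear $f$ this top-degree form is a nonzero scalar multiple of a diagonal (Fermat-type) form $X_1^N+\cdots+X_n^N$, whose gradient vanishes only at the origin. So the whole argument is really the computation of $R_g^{\deg(R_g)}$ via Lemma~\ref{lemma: Rdeg}.

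In detail: $\mathrm{pcl}(V_g)$ is the hypersurface of $\Pp^n$ cut out by $R_g^h$, and $\Sigma_g^\infty$ consists of the points $\mathbf{x}=(0:x_1:\cdots:x_n)$ of the hyperplane $\{X_0=0\}$ at which $\nabla R_g^h$ vanishes. Writing $R_g^h=R_g^{\deg(R_g)}+X_0\cdot(\cdots)$, for $1\le j\le n$ one has $(\partial R_g^h/\partial X_j)(0,x_1,\ldots,x_n)=(\partial R_g^{\deg(R_g)}/\partial X_j)(x_1,\ldots,x_n)$, so it suffices to check that these $n$ forms have no common zero in $\A^n$ besides $\mathbf{0}$ — which would force $x_1=\cdots=x_n=0$, impossible for a projective point, whence $\Sigma_g^\infty=\emptyset$. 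Now, since $f=b_1Y_1+\cdots+b_dY_d+a$ and $m_1<\cdots<m_d$, its highest-weight component is the single monomial $f^{\wt}=b_{i_0}Y_{i_0}$, where $i_0:=\max\{k:\ b_k\ne 0\}$ and $\wt(f)=m_{i_0}$, so $f^{\wt}(P_{m_1},\ldots,P_{m_d})=b_{i_0}(X_1^{m_{i_0}}+\cdots+X_n^{m_{i_0}})$. Feeding this into Lemma~\ref{lemma: Rdeg} (and treating $g\in\fq$ likewise) gives $R_g^{\deg(R_g)}=c\,(X_1^N+\cdots+X_n^N)$ with $N:=\deg(R_g)\in\{m_{i_0},e\}$ and $c\in\fq\setminus\{0\}$: namely $c=b_{i_0}$ if $e<\wt(f)$ or $g\in\fq$, $c=b_{i_0}+1$ if $e=\wt(f)$, and $c=1$ if $e>\wt(f)$. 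Hence $\partial R_g^{\deg(R_g)}/\partial X_j=cN\,X_j^{N-1}$, and $cN\ne0$ because $c\ne0$ and $\mathrm{char}(\fq)$ divides neither $e$ nor $m_{i_0}$; so the common zero locus of these forms in $\A^n$ is $\{\mathbf{0}\}$ and $\Sigma_g^\infty=\emptyset$. One may also phrase the last step through the proof of Proposition~\ref{prop: singular locus f linear}: $cN\,X_j^{N-1}$ is precisely the leading term $\mathrm{Lt}(Q_j)$ of $Q_j=\partial R_g/\partial X_j$ isolated there, and it is at the same time the full top homogeneous component of $Q_j$, since $Q_j$ involves only $X_j$ in top degree.

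The one point that needs care — and the one I would flag as the main obstacle — is the degenerate sub-case $e=\wt(f)$ with $c=b_{i_0}+1=0$ in $\fq$: there the degree-$m_{i_0}$ part of $R_g$ cancels, $\deg(R_g)$ drops, and the description of $R_g^{\deg(R_g)}$ above (as given by Lemma~\ref{lemma: Rdeg}) no longer applies. This is exactly the non-degeneracy tacitly assumed in that lemma; under it, or after replacing $R_g^{\deg(R_g)}$ by the next surviving weighted power-sum term $b_{i_1}P_{m_{i_1}}$ with $i_1<i_0$ and $b_{i_1}\ne0$ (whose exponent $m_{i_1}$ is again prime to $\mathrm{char}(\fq)$), the computation goes through verbatim. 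Apart from this bookkeeping, the proof is immediate once the reduction to the top-degree form is in place.
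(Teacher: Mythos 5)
Your proof is correct and follows essentially the same route as the paper: reduce to the top-degree form via Lemma~\ref{lemma: Rdeg}, observe that for linear $f$ it is a nonzero scalar multiple of a power sum $X_1^N+\cdots+X_n^N$ with $N$ prime to $\mathrm{char}(\fq)$, and conclude that its gradient vanishes only at the origin (the paper phrases this as the affine cone $\Sigma_g^{\infty}$ having dimension at most $0$, hence empty projectively). Your flag of the degenerate sub-case $e=\wt(f)$ with $b_{i_0}=-1$ is a fair catch --- Lemma~\ref{lemma: Rdeg} tacitly assumes no cancellation there --- though your proposed patch is itself incomplete, since after cancellation the new leading form of $R_g$ need not be $b_{i_1}P_{m_{i_1}}$: the polynomial $g_1$ may have degree anywhere up to $m_{i_0}-1$ and can then contribute to, or dominate, the top-degree component.
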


\begin{proof}
	Suppose that $g$ is defined as in \eqref{def: g} and consider $\Sigma_g^{\infty}\subset \Pp^n$. From Lemma \ref{lemma: Rdeg}, we have that: 
\begin{itemize}
\item $ R_g^h(0,X_1,\ldots,X_n)= f^{\wt}(P_{m_1}, \dots, P_{m_d}), $ if $e < \wt(f)$,
\item $R_g^h(0,X_1,\ldots,X_n)= f^{\wt}(P_{m_1}, \dots, P_{m_d})+ X_1^e+\dots+X_n^e,$ if  $e=\wt(f)$,
\item  $R_g^h(0,X_1,\ldots,X_n)= X_1^e+\dots+X_n^e$ if if  $e >\wt(f).$
\end{itemize}	 
On the other hand, if $g\in \fq$ then $R_g^h(0,X_1,\ldots,X_n)= f^{\wt}(P_{m_1}, \dots, P_{m_d}).$		 
	Observe that  $f^{\wt}= b_iY_i$ where $b_i:=\max\{b_k,\,  b_k\neq 0, \, 1\leq k \leq n\}$. Following the proof of Proposition \ref{prop: singular locus f linear} we deduce that $ \Sigma_g^{\infty} \subset \A^n$ has dimension at most $0$. Thus, $\mathrm{pcl}(V_g)\subset \Pp^n$ has no singular points at infinity.
	 
\end{proof}

From Proposition \ref{prop: singular locus f linear} and  Corollary \ref{singular locus plc(Vg) at infinity with f linear} we conclude the following result.

\begin{theorem} \label{singular locus plc(Vg) with f linear}
Let $n \geq 3$. Let $f \in \fq[Y_1, \dots, Y_d]$ be a linear polynomial and $g \in \fq[X_1, \dots, X_n]$ defined as in \eqref{def: g} or $g\in \fq$. Then  $\mathrm{pcl}(V_g)\subset \Pp^n$ has singular locus of dimension at most $0$.
\end{theorem}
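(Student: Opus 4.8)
The statement to prove is Theorem \ref{singular locus plc(Vg) with f linear}: for $n\geq 3$ and $f$ linear, $\mathrm{pcl}(V_g)\subset\Pp^n$ has singular locus of dimension at most $0$. The plan is simply to assemble the two preceding partial results, exactly as the paper does for its other "from Proposition X and Proposition Y we obtain" combinations. First I would recall that the singular locus $\Sigma_g^h$ of $\mathrm{pcl}(V_g)$ decomposes as the disjoint union of its affine part (the singular points with $X_0\neq 0$, which is naturally identified with the singular locus $\Sigma_g$ of the affine hypersurface $V_g$) and its part at infinity $\Sigma_g^\infty=\Sigma_g^h\cap\{X_0=0\}$. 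This is a standard fact about projective closures of affine hypersurfaces and needs no new argument.

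Next I would invoke Proposition \ref{prop: singular locus f linear}, which gives $\dim\Sigma_g\leq 0$ (either $V_g$ is nonsingular, in which case $\Sigma_g=\emptyset$, or $\dim\Sigma_g=0$), and Corollary \ref{singular locus plc(Vg) at infinity with f linear}, which gives $\dim\Sigma_g^\infty\leq 0$ (in fact $\mathrm{pcl}(V_g)$ has no singular points at infinity, so $\Sigma_g^\infty=\emptyset$). Since $\Sigma_g^h=\Sigma_g\sqcup\Sigma_g^\infty$ and the dimension of a finite union of varieties is the maximum of the dimensions of the pieces, we get $\dim\Sigma_g^h\leq\max\{0,0\}=0$, which is the claim. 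The case $g\in\fq$ is handled identically, since both Proposition \ref{prop: singular locus f linear} and Corollary \ref{singular locus plc(Vg) at infinity with f linear} explicitly cover it.

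There is essentially no obstacle here: the theorem is a bookkeeping corollary of the two results just established, and the only mildly subtle point—that one may pass between singularities of $\mathrm{pcl}(V_g)$ in the affine chart $\{X_0\neq 0\}$ and singularities of $V_g$ itself—is exactly the elementary fact (used repeatedly throughout Section 3 and cited as \cite[Chapter I]{Kunz85}) that an affine chart of a projective variety is isomorphic to an affine variety, so regularity of a point is an intrinsic local property independent of the ambient embedding. If anything requires a sentence of care it is the reminder that, because $f$ is linear, $\deg(R_g)=\max\{e,\wt(f)\}=\max\{e,m_i\}\geq 2$ (using $m_1\geq 2$ and $e$ not divisible by $\mathrm{char}(\fq)$, hence $e\geq 1$, together with the hypotheses in force), so that $\mathrm{pcl}(V_g)$ is genuinely a hypersurface of positive-degree-at-least-two and the notion of singular locus is the one used in the subsequent point-counting; but this is immediate from the setup and does not affect the dimension bound. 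Hence the proof consists of citing Proposition \ref{prop: singular locus f linear} and Corollary \ref{singular locus plc(Vg) at infinity with f linear} and taking the maximum.
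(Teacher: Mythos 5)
Your proposal is correct and follows exactly the paper's own route: the paper's proof is the single sentence ``From Proposition \ref{prop: singular locus f linear} and Corollary \ref{singular locus plc(Vg) at infinity with f linear} we conclude the following result,'' i.e., precisely the decomposition of the singular locus of $\mathrm{pcl}(V_g)$ into its affine part and its part at infinity that you describe. Your extra remarks on the chart identification and on $\deg(R_g)\geq 2$ are harmless elaborations of what the paper leaves implicit.
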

From Theorem \ref{singular locus plc(Vg) with f linear} and following the proof of Corollary \ref{teo: V es absolutamente irreducible}, we obtain:

\begin{corollary}\label{teo: V es absolutamente irreducible f linear}
	The hypersurface $V_g$ is absolutely irreducible.
\end{corollary}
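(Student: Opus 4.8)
The plan is to mimic the argument of Corollary~\ref{teo: V es absolutamente irreducible} verbatim, replacing the input bound $\dim\Sigma_g^h\le d$ by the sharper bound $\dim\Sigma_g^h\le 0$ coming from Theorem~\ref{singular locus plc(Vg) with f linear}. First I would recall that, by \cite[Chapter I, Proposition 5.17]{Kunz85}, $V_g$ is absolutely irreducible if and only if its projective closure $\mathrm{pcl}(V_g)\subset\Pp^n$ is absolutely irreducible, so it suffices to prove the latter.

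Next I would argue by contradiction: suppose $\mathrm{pcl}(V_g)$ is not absolutely irreducible. Since it is a hypersurface of $\Pp^n$ (defined by the single polynomial $R_g^h$), any nontrivial decomposition into absolutely irreducible components $\mathrm{pcl}(V_g)=\mathcal{C}_1\cup\cdots\cup\mathcal{C}_s$ has each $\mathcal{C}_i$ a hypersurface of $\Pp^n$, hence of dimension $n-1$. Because $\Pp^n$ has the property that two hypersurfaces always meet, $\mathcal{C}_i\cap\mathcal{C}_j\neq\emptyset$ for $i\neq j$, and since both are absolutely irreducible of dimension $n-1$ their intersection has dimension exactly $n-2$. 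As every point of $\mathcal{C}_i\cap\mathcal{C}_j$ is a singular point of $\mathrm{pcl}(V_g)$, we get $\dim\Sigma_g^h\ge n-2$, where $\Sigma_g^h$ denotes the singular locus of $\mathrm{pcl}(V_g)$.

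Finally I would invoke Theorem~\ref{singular locus plc(Vg) with f linear}, which gives $\dim\Sigma_g^h\le 0$. Combined with $n\ge 3$ this forces $n-2\le 0$, a contradiction. Hence $\mathrm{pcl}(V_g)$, and therefore $V_g$, is absolutely irreducible. There is essentially no obstacle here: the real content was already established in Proposition~\ref{prop: singular locus f linear} and Corollary~\ref{singular locus plc(Vg) at infinity with f linear}; this corollary is a formal consequence, and the only thing to be careful about is that the bound $n\ge 3$ is exactly what is needed to beat the dimension-$(n-2)$ lower bound against the dimension-$0$ upper bound (indeed the hypothesis $n\ge 3$ is stated precisely for this reason, in place of the stronger $d\le n-3$ used in the general case).
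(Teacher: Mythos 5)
Your proposal is correct and is exactly the argument the paper intends: the paper's proof of this corollary simply says to follow the proof of Corollary~\ref{teo: V es absolutamente irreducible}, substituting the bound $\dim\Sigma_g^h\le 0$ from Theorem~\ref{singular locus plc(Vg) with f linear} and using $n\ge 3$ to contradict $\dim\Sigma_g^h\ge n-2$. Your observation about why $n\ge 3$ replaces $d\le n-3$ here is accurate.
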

We arrive at the following result  concerning the number of  $\fq$-rational points of the variety $V_g$ for a linear polynomial $f$. 
\begin{theorem}  \label{estimation with f linear} 
Let $n\geq 3$. Let $m_{1},\ldots, m_{d}$ be positive integer with  $ m_{1}<\cdots<  m_{d}$. We assume that $\mathrm{char}(\fq)$ does not divide $e$ and $m_{j}$  for all $1\leq j \leq d$. Let $R_g=f(P_{m_1}, \dots, P_{m_d})+g$ with  $f=b_1 Y_1+ \dots+b_dY_d+a$ and $g \in \fq[X_1 \klk X_n]$ of degree $e$ and $g$ is defined as in \eqref{def: g}  or $g\in \fq$. Then,  
	\begin{equation*} \label{estimation case lineal }
	\big||V_g(\fq)|-q^{n-1}\big|\leq q^{(n-1)/2}\big(2(m_i-1)^{n-1}q^{1/2}+6(m_i+2)^n\big),
	\end{equation*}
	where  $m_i:=\max\{m_k,\,  b_k\neq 0, \, 1\leq k \leq n\}.$
\end{theorem}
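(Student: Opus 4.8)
The plan is to derive the estimate from the structural results just established together with the projective point counts collected earlier. Set $\delta:=\deg(R_g)$; we may assume $\delta\ge 2$, since otherwise $R_g$ is linear, $V_g$ is an affine hyperplane and the estimate holds trivially. By Theorem~\ref{singular locus plc(Vg) with f linear} and Corollary~\ref{teo: V es absolutamente irreducible f linear} the projective closure $\mathrm{pcl}(V_g)\subset\Pp^n$ is an absolutely irreducible $\fq$--hypersurface of degree $\delta$ whose singular locus has dimension at most $0$, and by Corollary~\ref{singular locus plc(Vg) at infinity with f linear} it has no singular points at infinity. Exactly as in the proof of Theorem~\ref{teo 1 intro}, I would count the $\fq$--rational points of $\mathrm{pcl}(V_g)$ and of its hyperplane section at infinity $V_g^{\infty}:=V(R_g^{\deg(R_g)})\subset\Pp^{n-1}$ separately, and then combine them via $|V_g(\fq)|=|\mathrm{pcl}(V_g)(\fq)|-|V_g^{\infty}(\fq)|$ and $p_{n-1}-p_{n-2}=q^{n-1}$.

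For $\mathrm{pcl}(V_g)$ I would invoke the Ghorpade--Lachaud estimate \eqref{eq: estimacion de Ghorpade Lachaud} with $m=n-1$ and $s=0$, bounding the Betti numbers by \eqref{eq:upper bound betti number} and \eqref{eq:upper bound sums betti numbers}, which gives
\begin{equation*}
\big||\mathrm{pcl}(V_g)(\fq)|-p_{n-1}\big|\le (\delta-1)^{n-1}q^{n/2}+6(\delta+2)^{n}q^{(n-1)/2}.
\end{equation*}

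For $V_g^{\infty}$ the crucial observation is that it is actually \emph{nonsingular} in $\Pp^{n-1}$, a stronger statement than ``$\mathrm{pcl}(V_g)$ has no singular point at infinity''. Indeed, $R_g^{\deg(R_g)}$ is homogeneous, so the common zero set in $\A^n$ of its partial derivatives $\partial R_g^{\deg(R_g)}/\partial X_j$ ($1\le j\le n$) is a cone; and the regular--sequence argument underlying the proof of Corollary~\ref{singular locus plc(Vg) at infinity with f linear}---which uses Lemma~\ref{lemma: Rdeg} to identify $R_g^{\deg(R_g)}$ in each of the cases $e<\wt(f)$, $e=\wt(f)$, $e>\wt(f)$ and $g\in\fq$, together with the assumption that $\mathrm{char}(\fq)$ divides neither $e$ nor any $m_j$---shows that this cone has dimension at most $0$, hence reduces to $\{\mathbf{0}\}$. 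Therefore $V_g^{\infty}$ is a nonsingular hypersurface of $\Pp^{n-1}$ of degree $\delta$, in particular a nonsingular ideal--theoretic complete intersection of dimension $n-2$ and multidegree $(\delta)$, so Deligne's estimate \eqref{estimacion Deligne} combined with \eqref{eq:upper bound betti number} yields
\begin{equation*}
\big||V_g^{\infty}(\fq)|-p_{n-2}\big|\le (\delta-1)^{n-1}q^{(n-2)/2}.
\end{equation*}

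Finally I would combine the two displays by the triangle inequality and use $q^{(n-2)/2}\le q^{n/2}$ to get
\begin{equation*}
\big||V_g(\fq)|-q^{n-1}\big|\le 2(\delta-1)^{n-1}q^{n/2}+6(\delta+2)^{n}q^{(n-1)/2}=q^{(n-1)/2}\big(2(\delta-1)^{n-1}q^{1/2}+6(\delta+2)^{n}\big),
\end{equation*}
and then close the argument by recalling that, $f$ being linear, $\deg\big(f(P_{m_1}\klk P_{m_d})\big)=\wt(f)=m_i$ with $m_i=\max\{m_k:\ b_k\ne 0\}$, so that (as $g$ has degree $\le\wt(f)$, and is irrelevant to the leading form when $g\in\fq$) $\delta=m_i$ by Lemma~\ref{lemma: Rdeg}; substituting this value gives exactly the claimed bound. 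The whole proof is a matter of assembling ingredients that are already in place once Theorem~\ref{singular locus plc(Vg) with f linear} and the estimates recalled above are available; the only step that demands genuine care---and the source of the improvement over Theorem~\ref{teo 1 intro}---is the upgrade from ``no singular points at infinity for $\mathrm{pcl}(V_g)$'' to ``$V_g^{\infty}$ is nonsingular'', since it is precisely this that permits the use of Deligne's estimate (with no secondary error term) for the section at infinity, and hence the small constants in the final bound.
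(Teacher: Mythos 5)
Your proposal is correct and follows essentially the same route as the paper: apply the Ghorpade--Lachaud bound to $\mathrm{pcl}(V_g)$ using the dimension-$0$ singular locus from Theorem~\ref{singular locus plc(Vg) with f linear}, observe that $V_g^{\infty}$ is a nonsingular hypersurface of $\Pp^{n-1}$ so that Deligne's estimate \eqref{estimacion Deligne} applies to it, and combine the two via the triangle inequality with $\delta=\deg(R_g)=m_i$. Your explicit remark that the cone argument upgrades ``no singular points at infinity'' to ``$V_g^{\infty}$ is nonsingular'' is exactly the step the paper invokes by referring back to the proof of Corollary~\ref{singular locus plc(Vg) at infinity with f linear}.
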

\begin{proof}
	From Theorem  \ref{singular locus plc(Vg) with f linear}, Corollary \ref{teo: V es absolutamente irreducible f linear} and  the estimate \eqref{eq: estimacion de Ghorpade Lachaud},  we have 
	\begin{equation}\label{estimation f linear}
	\big||\mathrm{pcl}(V_g)(\fq))|-p_{n-1}\big | \leq (m_i-1)^{n-1} q^{n/2}+6(m_i+2)^nq^{(n-1)/2}.
	\end{equation}
Now, we estimate the number of $\fq$-rational points of $V_g^{\infty}:=\mathrm{pcl}(V_g)\cap\{X_0=0\}\subset \Pp^{n-1}$. Note that $V_g^{\infty}=V(R_g^{\deg R_g})$ is a hypersurface of $\Pp^{n-1}$ of dimension $n-2$. Following  the proof of Corollary \ref{singular locus plc(Vg) at infinity with f linear} we deduce that $V_g^{\infty}$ is nonsingular variety of degree $\deg(R_g)=m_i$. 
	Then,
from \eqref{estimacion Deligne}:
	\begin{equation}\label{estimation V infty f linear}
	||V_g^{\infty}(\fq)|-p_{n-2}| \leq (m_i-1)^{n-1} q^{(n-2)/2}.
	\end{equation}
	From \eqref{estimation f linear} and \eqref{estimation V infty f linear}, we conclude that
	\begin{align*}
	\big||V_g(\fq)|-q^{n-1}\big|\leq & \big||\mathrm{pcl}(V_g)(\fq)|-p_{n-1}\big|+\big||V_g^{\infty}(\fq)|-p_{n-2}\big|\\
	\leq & (m_i-1)^{n-1} q^{n/2}+6(m_i+2)^nq^{(n-1)/2}+(m_i-1)^{n-1} q^{(n-2)/2}\\
	\leq & q^{(n-1)/2}\big(2(m_i-1)^{n-1}q^{1/2}+6(m_i+2)^n\big).
	\end{align*}	
\end{proof}

\begin{remark}
Note that Theorem \ref{estimation with f linear} improves Theorem \ref{teo 1 intro} when $f$ is a nonzero linear polynomial. Indeed, the estimate of Theorem \ref{estimation with f linear} does not depend on $d$, the number of $m_j$-power sum polynomials in which $f$ is evaluated. Concretely, from Theorem \ref{teo 1 intro} we have that $|V_g(\fq)|=q^{n-1}+ \mathcal{O}(q^{(n+d)/2})$, while  Theorem \ref{estimation with f linear} implies  $|V_g(\fq)|=q^{n-1}+ \mathcal{O}(q^{n/2})$. 
\end{remark}

\begin{remark}
Suppose that the polynomial $g_1$ of the definition \eqref{def: g} is identically zero 
and $R_g$ is an homogeneous polynomial. Then $R_g=c(X_1^e+\cdots+X_n^e)$ for some $c\in \fq$ and $V_g \subset \Pp^{n-1}$. It is easy to see that the set of $\{{\bf{x}}\in \A^n:\,\,\nabla R_g({\bf{x}})=0\}=\{{\bf{0}}\}$, from where $V_g$  is a nonsingular variety. 
Thus, from \eqref{estimacion Deligne}:
\begin{equation*} 
\big|\overline{N}_g-p_{n-2}\big|\leq (e-1)^{n-1}q^{(n-2)/2},
\end{equation*}
where  $\overline{N}_g$ is the number of $\fq$--rational projective points of $V_g.$
%
As in Remark \ref{remark: g1=0}, we conclude that
\begin{equation*} 
\big||V_g(\fq)|-q^{n-1}\big|\leq (e-1)^{n-1}q^{(n-2)/2}(q-1).
\end{equation*}
%
\end{remark}
\begin{remark} Suppose that $g=0$ and  $R_0$ is an homogeneous polynomial. Then  $R_0=c(X_1^m+\cdots+X_n^m)$ with $c\in \fq$. Following the arguments of the  above remark, it can be shown that 
\begin{equation*} 
\big|N_0-q^{n-1}\big|\leq (m-1)^{n-1}q^{(n-2)/2}(q-1).
\end{equation*}
\end{remark}
\section{Special Deformed Hypersurfaces}

In this section we follow the same methodology to obtain estimates of $\fq$--solutions of some well known equations over finite fields. These results are not obtained directly from applying  Theorem \ref{teo 1 intro} since we can take advantage of the properties of the polynomial $f$ under consideration for these particular cases. 
\subsection{ Deformed diagonal equations over a  finite field}
Let $m$, $n$ be positive integers such that $n\geq 3$ and $m\geq 2$ is not divisible by $\mathrm{char}(\fq)$  and let $g \in \fq[X_1 \klk X_n]$ with $\deg(g)< m$. Consider the equation:
\begin{equation}\label{eq: deformed diagonal}
c_1X_1^m + \dots+ c_nX_n^m +g(X_1 \klk X_n)=0,
\end{equation}
where $c_i\in \fq \setminus \{0\}$, $1\leq i \leq n$.
We denote by $N_g$ the number of $\fq$--rational solutions of \eqref{eq: deformed diagonal}. Let  $R_g:=c_1X_1^m + \dots+ c_n X_n^m +g(X_1 \klk X_n)$ and let $V_g\subset \A^n$ be defined by $V_g=V(R_g)$. In this case, ${\bf{x}}\in \Sigma_g$ satisfies
\begin{equation*} R_g=0, \quad c_jmX_j^{m-1}+\frac{\partial g}{\partial X_j}=0,\quad 1\leq j \leq n.
\end{equation*}
Following the same arguments used to prove Proposition \ref{prop: singular locus f linear}, Corollaries \ref{singular locus plc(Vg) at infinity with f linear} and \ref{teo: V es absolutamente irreducible f linear} and Theorems \ref{singular locus plc(Vg) with f linear} 
  and \ref{estimation with f linear}   we can deduce  the following result.
\begin{theorem}\label{theo: estimacion ecuaciones diagonales deformadas}
With the hypotheses as above, we have that
	\begin{equation} \label{estimation case d=1 }
	|N_g-q^{n-1}|\leq q^{(n-1)/2}\big( 2(m-1)^{n-1}q^{1/2}+6(m+2)^n \big).
	\end{equation}
\end{theorem}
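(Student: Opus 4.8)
The plan is to recognize that the deformed diagonal polynomial $R_g = c_1X_1^m + \cdots + c_nX_n^m + g$ is precisely the polynomial $R_g$ of Section 3 in the special case $d=1$, $f = b_1 Y_1 + a$ a \emph{linear} polynomial in a single variable, with $P_{m_1} = P_m = X_1^m + \cdots + X_n^m$ — except that here the coefficients $c_i$ of the power-sum terms need not be equal. So the strategy is to rerun the $f$-linear machinery (Proposition \ref{prop: singular locus f linear}, Corollaries \ref{singular locus plc(Vg) at infinity with f linear} and \ref{teo: V es absolutamente irreducible f linear}, Theorems \ref{singular locus plc(Vg) with f linear} and \ref{estimation with f linear}) while checking that the arguments survive the replacement of $b_1 m X_j^{m-1}$ by $c_j m X_j^{m-1}$, i.e.\ by coefficients varying with $j$ but all nonzero.

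First I would bound $\dim \Sigma_g$. A point $\mathbf x \in \Sigma_g$ satisfies $Q_j := c_j m X_j^{m-1} + \partial g/\partial X_j = 0$ for $1 \le j \le n$, together with $R_g = 0$. Working in the graded lexicographic order of $\fq[X_1,\ldots,X_n]$ with $X_n > \cdots > X_1$, since $\deg g < m$ and $\mathrm{char}(\fq) \nmid m$ the leading term of $Q_j$ is $c_j m X_j^{m-1}$ (here $c_j \ne 0$ is exactly what is needed, in place of $b_1 \ne 0$); these leading terms are pairwise coprime monomials in distinct variables, hence form a Gröbner basis of the ideal they generate and a regular sequence, so by \cite[Proposition 15.15]{Eisenbud95} the $Q_1,\ldots,Q_n$ form a regular sequence and $\dim V(Q_1,\ldots,Q_n) = 0$. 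Thus $V_g$ is nonsingular or $\dim \Sigma_g = 0$, exactly as in Proposition \ref{prop: singular locus f linear}. Next, for the behaviour at infinity: $R_g$ is homogeneous of degree $m$ (since $\deg g < m$), so $R_g^{\deg R_g} = c_1 X_1^m + \cdots + c_n X_n^m$, and $\nabla R_g^{\deg R_g}(\mathbf x) = (c_1 m x_1^{m-1}, \ldots, c_n m x_n^{m-1})$ vanishes only at $\mathbf x = \mathbf 0$; hence $\mathrm{pcl}(V_g)$ has no singular points at infinity (analogue of Corollary \ref{singular locus plc(Vg) at infinity with f linear}), so $\dim \Sigma_g^h \le 0$, and since $n \ge 3$ the argument of Corollary \ref{teo: V es absolutamente irreducible} gives that $\mathrm{pcl}(V_g)$, and therefore $V_g$, is absolutely irreducible.

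Finally I would assemble the estimate exactly as in the proof of Theorem \ref{estimation with f linear}, with the degree $\delta = m$ and the singular-locus dimension $s \le 0$. The Ghorpade--Lachaud estimate \eqref{eq: estimacion de Ghorpade Lachaud} together with \eqref{eq:upper bound sums betti numbers} and \eqref{eq:upper bound betti number} gives
\begin{equation*}
\big||\mathrm{pcl}(V_g)(\fq)| - p_{n-1}\big| \le (m-1)^{n-1} q^{n/2} + 6(m+2)^n q^{(n-1)/2}.
\end{equation*}
The hypersurface at infinity $V_g^\infty = V(c_1 X_1^m + \cdots + c_n X_n^m) \subset \Pp^{n-1}$ is nonsingular of degree $m$, so Deligne's bound \eqref{estimacion Deligne} gives $\big||V_g^\infty(\fq)| - p_{n-2}\big| \le (m-1)^{n-1} q^{(n-2)/2}$. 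Since $|V_g(\fq)| = |\mathrm{pcl}(V_g)(\fq)| - |V_g^\infty(\fq)|$ and $q^{n-1} = p_{n-1} - p_{n-2}$, the triangle inequality yields
\begin{equation*}
\big|N_g - q^{n-1}\big| \le (m-1)^{n-1} q^{n/2} + 6(m+2)^n q^{(n-1)/2} + (m-1)^{n-1} q^{(n-2)/2} \le q^{(n-1)/2}\big(2(m-1)^{n-1} q^{1/2} + 6(m+2)^n\big),
\end{equation*}
which is the claimed bound. I do not expect a genuine obstacle here: the only point requiring care is to note that every place where the $f$-linear argument used ``$b_1 \ne 0$'' one now uses ``$c_j \ne 0$ for every $j$'', so that each $Q_j$ keeps its expected leading term and the regular-sequence/Gröbner-basis reasoning goes through verbatim; the homogeneity of $R_g$ (from $\deg g < m$) is what makes the analysis at infinity immediate.
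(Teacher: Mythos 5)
Your proposal is correct and follows essentially the same route as the paper, which itself only states that the result follows ``by the same arguments'' as Proposition \ref{prop: singular locus f linear}, Corollaries \ref{singular locus plc(Vg) at infinity with f linear} and \ref{teo: V es absolutamente irreducible f linear}, and Theorems \ref{singular locus plc(Vg) with f linear} and \ref{estimation with f linear}; you correctly identify that the only point needing verification is that the leading term of each $Q_j$ is $c_jmX_j^{m-1}$ with $c_j\neq 0$, so the Gr\"obner-basis/regular-sequence argument and the nonsingularity of $V_g^{\infty}=V(c_1X_1^m+\cdots+c_nX_n^m)$ go through unchanged. (The only quibble is the phrase ``$R_g$ is homogeneous of degree $m$'' — you mean its highest-degree component is $c_1X_1^m+\cdots+c_nX_n^m$, which is what you actually use.)
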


Observe that in \cite{AdSp87} the authors use  the Newton polyhedra to prove a result that allows one to obtain an estimate on  the number of $\fq$-solution of a deformed diagonal equation. This result holds under some hypotheses for  $g$, which  are not present in Theorem  \ref{theo: estimacion ecuaciones diagonales deformadas}.


\begin{theorem}\label{teo:existence}
	Let $q>(m+2)^{\frac{2n}{n-2}}$ and $g\in \fq[X_1,\ldots,X_n]$ of degree less than $m$. Assume that $\mathrm{char}(\fq)$ does not divide $m$. Then, the equation $c_1X_1^m+\cdots+c_nX_n^m+g(X_1,\ldots,X_n)=0$, has at least one solution in  $\fq^n$.
\end{theorem}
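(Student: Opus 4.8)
The plan is to deduce the statement from the explicit estimate of Theorem~\ref{theo: estimacion ecuaciones diagonales deformadas}, which gives
\[
N_g \;\ge\; q^{n-1}-q^{(n-1)/2}\bigl(2(m-1)^{n-1}q^{1/2}+6(m+2)^n\bigr).
\]
Since $N_g$ is a nonnegative integer, it suffices to show that the right-hand side is strictly positive, that is,
\[
q^{(n-1)/2} \;>\; 2(m-1)^{n-1}q^{1/2}+6(m+2)^n ;
\]
indeed, any point of $V_g(\fq)$ is then an $\fq$-rational solution of $c_1X_1^m+\cdots+c_nX_n^m+g=0$. Thus the whole argument reduces to this single elementary inequality, and the remaining work is to see that it is forced by the hypothesis $q>(m+2)^{2n/(n-2)}$.

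The key point is to use the hypothesis in its full strength rather than only through the weaker consequence $q^{(n-2)/2}>(m+2)^n$: raising $q>(m+2)^{2n/(n-2)}$ to the power $\tfrac{n-1}{2}$ also yields $q^{(n-1)/2}>(m+2)^{n(n-1)/(n-2)}=(m+2)^n(m+2)^{n/(n-2)}$. I would then dominate the two summands separately. For the first, $m+2\ge 4$ gives $2(m-1)^{n-1}\le\tfrac12(m+2)^n$, so
\[
2(m-1)^{n-1}q^{1/2}\;\le\;\tfrac12(m+2)^n q^{1/2}\;\le\;\tfrac12\,q^{(n-2)/2}q^{1/2}\;=\;\tfrac12\,q^{(n-1)/2}.
\]
For the second, $6(m+2)^n\le 6(m+2)^{-n/(n-2)}q^{(n-1)/2}$, which is at most $\tfrac12\,q^{(n-1)/2}$ as soon as $(m+2)^{n/(n-2)}\ge 12$; adding the two bounds gives the inequality strictly.

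The main obstacle is making this work uniformly in $m,n,q$: the crude $\tfrac12$--$\tfrac12$ apportioning above fails exactly when $(m+2)^{n/(n-2)}<12$, i.e.\ for small degree $m$ together with large $n$, where the hypothesis only places $q$ barely above $(m+2)^{2n/(n-2)}$. In that residual range I would allot a much smaller share of $q^{(n-1)/2}$ to the term $2(m-1)^{n-1}q^{1/2}$ ---which is negligible next to $6(m+2)^n$ when $q$ is close to $(m+2)^{2n/(n-2)}$--- and I would treat the genuinely small-degree case $m=2$ by hand: since $\mathrm{char}(\fq)\neq 2$, completing the square turns the equation into a nondegenerate quadric in $n\ge 3$ variables, which always has an $\fq$-rational point. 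Everything beyond this elementary bookkeeping is an immediate invocation of Theorem~\ref{theo: estimacion ecuaciones diagonales deformadas}.
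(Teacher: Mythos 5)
Your reduction is exactly the paper's: apply Theorem \ref{theo: estimacion ecuaciones diagonales deformadas} and check that the hypothesis on $q$ forces $q^{(n-1)/2}>2(m-1)^{n-1}q^{1/2}+6(m+2)^n$. Your treatment of the first summand is correct, and your identification of the delicate range is in fact more careful than the paper's own proof, which quietly assumes $q>144$ --- a condition not implied by $q>(m+2)^{2n/(n-2)}$ once $n\ge 5$ and $m$ is small.

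The gap is in your plan for the residual range $(m+2)^{n/(n-2)}<12$. Reapportioning the shares only helps against the term $2(m-1)^{n-1}q^{1/2}$, which is indeed negligible; the genuine obstruction is $6(m+2)^n$ on its own, and since the hypothesis guarantees no more than $q^{(n-1)/2}>(m+2)^{n}(m+2)^{n/(n-2)}$, no apportioning can succeed once $(m+2)^{n/(n-2)}\le 6$. This happens for $m=2$, $n\ge 9$ (covered by your quadric argument) but also for $m=3$, $n\ge 20$, which is covered by nothing in your proposal. The failure is not merely a loss of slack: take $m=3$, $n=46$, $q=29$. Then $5^{92/44}\approx 28.94<29$ and $\mathrm{char}(\F_{29})=29$ does not divide $3$, so all hypotheses hold; yet $\ln\big(29^{45/2}\big)\approx 75.76$ while $\ln\big(6\cdot 5^{46}\big)\approx 75.83$, so $6(m+2)^n>q^{(n-1)/2}$ and the lower bound $N_g\ge q^{n-1}-q^{(n-1)/2}\big(2(m-1)^{n-1}q^{1/2}+6(m+2)^n\big)$ is negative, giving no conclusion. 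Closing this case requires either a separate argument for small $m$ in many variables (as you supply only for $m=2$) or a stronger hypothesis on $q$. The same defect is present in the paper's own proof, so you have in effect located a real problem rather than created one --- but as written your proposal does not prove the theorem in full.
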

\begin{proof}
Observe that if $q > 144$ then $6(m+2)^n < 1/2 (m+2)^n q^{1/2}$.   On the other hand, we have that $2 (m-1)^{n-1} < 1/2 (m+2)^n$. Then, from \eqref{estimation case d=1 } we deduce that 
$$|N_g-q^{n-1}|\leq q^{n/2}(m+2)^n,$$
	from where 
	\begin{equation}\label{Ng}
	N_g \geq q^{n-1}-q^{n/2}(m+2)^n.
	\end{equation}	
	Therefore, the equation $R_g=c_1X_1^m+ \cdots +c_nX_n^m+g(X_1,\cdots,X_n)=0$ has at least one solution in $\fq^n$  if the
	right-hand side of \eqref{Ng} is a positive number. The result follows. 
\end{proof}
\begin{remark}
Note that if $q>(m+2)^2$ then $n> \frac{2 \log(q)}{\log(\frac{q}{(m+2)^2})}$. Observe that $E(q)=\frac{2\log(q)}{\log(\frac{q}{(m+2)^2})}$ is a decreasing function and $\lim_{q\rightarrow \infty}E(q)=2$. Hence, for $q$ sufficiently large, the equation \eqref{eq: deformed diagonal} has at least one solution in $\fq^n$, $n>2$.
\end{remark}
Carlitz \cite{Ca56} showed that if $m=n$, $m$ divides $\mathrm{char}(\fq)-1$ and $g\in\fq[X_1,\ldots,X_n]$ is of degree less than $m$, then $R_g=0$ has at least one solution in $\fq^n$. This result was extended by  Felszeghy \cite{Fe06}, who proved that $R_g=0$ has at least one solution in $\fq^n$ if $q=p:=\mathrm{char}(\fq)$ and 
	\begin{equation}\label{existence Felszeghy}
	 n \geq \Big\lceil \frac{p-1}{\lfloor \frac{p-1}{m}\rfloor}\Big \rceil. 
\end{equation}	
He also shows that if $m$ divides $p-1$ and $n\geq m$, then $R_g=0$ is solvable in $\fq^n$.	

	Theorem \ref{teo:existence} improves \eqref{existence Felszeghy} in several aspects. Indeed, on one hand, our result holds for any $q$ such that $\mathrm{char}(\fq)$  does not divide $m$ while \eqref{existence Felszeghy} holds if $q=p$.  On the other hand, we prove that if $q>(m+2)^2$ then \eqref{eq: deformed diagonal} has at least a $\fq$-rational solution for $n\geq 3$ while Felszeghy's result requires $p\geq m+1$ and $n\geq m+1$. In particular, if $p>(m+2)^2$ we can guarantee the existence of at least one $\fp$-rational solution for any $n\geq 3$ instead of $n\geq m+1$.


\begin{remark} \label{equation diagonal} For $b\in \fq$ consider the following diagonal equation:
$$c_1X_1^m+\cdots+ c_nX_n^m=b.$$
If $b=0$ then Theorem \ref{estimation with f linear} gives us $N_0=q^{n-1}+\mathcal{O}(q^{n/2})$, which is of similar order to the results in the literature (see, e.g.,  \cite[Chapter 6, \S 3]{LiNi83}). Suppose then $b\neq 0$. From \eqref{estimation case d=1 }, we have that $N_b=q^{n-1}+\mathcal{O}(q^{n/2}),$ but this estimate can be improved. Indeed, it can be shown that the singular locus $\Sigma_b$ of the affine variety $V(R_b)$ defined by $R_b=c_1X_1^m+\cdots+ c_nX_n^m-b$ is $\Sigma_b=\{0\}.$ Hence, the projective variety $\mathrm{pcl}(V(R_b))$ is nonsingular and, from \eqref{estimacion Deligne},  we have 
\begin{equation}\label{eq diagonal b distinto 0}
|N_b-q^{n-1}| \leq (m-1)^nq^{(n-2)/2}(1+q^{1/2}).
\end{equation}
Observe that this result is  Weil 's estimate for diagonal equations (see, e.g., \cite[Chapter 6, \S 3]{LiNi83}).
\end{remark} 

\subsection{Generalized Markoff-Hurwitz-type equations}
Let $m, n, k_1, \ldots, k_n$ be positive integers, $n\geq 3$ and $m\geq 2$, $a, b \in \fq$ and $a_i \in \fq\setminus \{0\}$, $1\leq i \leq n$. Consider the equation
\begin{equation}\label{Baoulina}
a_1X_1^m+\cdots +a_nX_n^m+a=bX_1^{k_1}\ldots X_n^{k_n}.
\end{equation}
 Observe that this is an special case of deformed diagonal equations. Denote by $N$ the number of $\fq$--rational solutions of \eqref{Baoulina}. Suppose that $\mathrm{char}(\fq)$ does not divide $m$. 
 Let  $R_g:=a_1X_1^m+\cdots +a_nX_n^m+g$, where $g=a-bX_1^{k_1}\cdots X_n^{k_n}$ and $k_1+\cdots +k_n<m$. From Theorem \ref{theo: estimacion ecuaciones diagonales deformadas}
we obtain the following result.
\begin{theorem} \label{teo: estimacion eq de Baoulina}
  With the same hypotheses as above, $N$ satisfies the following estimate:  
\begin{equation*} 
	\big|N-q^{n-1}\big|\leq q^{(n-1)/2}\big(2(m-1)^{n-1}q^{1/2}+6(m+2)^n \big).
	\end{equation*}
\end{theorem}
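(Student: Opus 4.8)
The plan is to observe that \eqref{Baoulina} is nothing but a particular deformed diagonal equation, and then invoke Theorem \ref{theo: estimacion ecuaciones diagonales deformadas} directly. Setting $g := a - bX_1^{k_1}\cdots X_n^{k_n} \in \fq[X_1,\ldots,X_n]$ and $R_g := a_1X_1^m + \cdots + a_nX_n^m + g$, equation \eqref{Baoulina} is equivalent to $R_g = 0$, so that $N$ coincides with the number $N_g$ of $\fq$--rational solutions of this deformed diagonal equation.

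It then suffices to check that all the hypotheses of Theorem \ref{theo: estimacion ecuaciones diagonales deformadas} are in force. We have $n \geq 3$, $m \geq 2$ and $\mathrm{char}(\fq) \nmid m$ by assumption, and the $a_i \in \fq \setminus \{0\}$ play the role of the nonzero coefficients $c_i$. The only point that genuinely requires verification is the degree bound $\deg(g) < m$: the monomial $bX_1^{k_1}\cdots X_n^{k_n}$ has total degree $k_1 + \cdots + k_n$ (and is absent when $b = 0$), while the constant $a$ has degree $0$; hence $\deg(g) \leq \max\{k_1 + \cdots + k_n,\, 0\} < m$, using the standing assumption $k_1 + \cdots + k_n < m$ together with $m \geq 2$.

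Consequently, Theorem \ref{theo: estimacion ecuaciones diagonales deformadas} applied to $R_g$ yields
$$\big|N - q^{n-1}\big| = \big|N_g - q^{n-1}\big| \leq q^{(n-1)/2}\big(2(m-1)^{n-1}q^{1/2} + 6(m+2)^n\big),$$
which is exactly the asserted estimate. Since the argument is a pure specialization of an already established result, there is essentially no obstacle; the only subtlety is ensuring that the ``off-diagonal'' monomial $bX_1^{k_1}\cdots X_n^{k_n}$ does not push the degree of $g$ up to $m$ or beyond, and this is precisely guaranteed by $k_1 + \cdots + k_n < m$ --- the very hypothesis under which our estimate improves on Mordell's, whose result requires $k_1 = \cdots = k_n = 1$.
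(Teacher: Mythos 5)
Your proof is correct and follows exactly the paper's route: recognize \eqref{Baoulina} as a deformed diagonal equation with $g=a-bX_1^{k_1}\cdots X_n^{k_n}$, note $\deg(g)\leq k_1+\cdots+k_n<m$, and apply Theorem \ref{theo: estimacion ecuaciones diagonales deformadas}. Your explicit verification of the degree hypothesis is a welcome touch that the paper leaves implicit.
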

In what follows we obtain sufficient conditions for the existence of a $\fq$--rational solution with nonzero coordinates. 
We shall need the following estimate on the number of $\fq$--rational solutions of \eqref{Baoulina} with $i$ coordinates equal to zero. We denote this number $N_i$.

	\begin{proposition}  With the same hypotheses as above, the number $N_i$ satisfies the following estimate:

If $a=0$  and $i=1 \klk n-2$, then
\begin{equation} \label{cota sup si a es cero}
|N_i-q^{n-i-1}| \leq (m-1)^{n-i-1}q^{(n-i-2)/2}(q-1).
\end{equation}

If $a\neq 0$ and $i=1 \klk n-1$, then 
\begin{equation}\label{cota sup  para a distinto de cero}
|N_i-q^{n-i-1}| \leq (m-1)^{n-i}q^{(n-i-2)/2}(1+q^{1/2})
\end{equation}
	\end{proposition}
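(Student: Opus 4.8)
The plan is to reduce the counting of solutions with exactly $i$ zero coordinates to a counting problem in lower dimension, to which we can apply either the Weil-type estimate for diagonal equations or the homogeneous-case estimates already established. Fix a subset $S\subset\{1,\dots,n\}$ with $|S|=i$ and count the solutions $(x_1,\dots,x_n)$ of \eqref{Baoulina} with $x_j=0$ exactly for $j\in S$; by symmetry the total is $\binom{n}{i}$ times this count. Setting the variables in $S$ to zero kills the monomial $bX_1^{k_1}\cdots X_n^{k_n}$ (since some $k_j\geq 1$ with $j\in S$, as the $k_j$ are positive), so the remaining equation in the $n-i$ surviving variables is simply the diagonal equation $\sum_{j\notin S}a_jX_j^m+a=0$. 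Thus $N_i=\binom{n}{i}\,\widetilde N$, where $\widetilde N$ is the number of solutions of this diagonal equation in $(\fq^\times)^{n-i}$, i.e. with all coordinates nonzero.

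Next I would pass from ``all coordinates nonzero'' to ``arbitrary'' by inclusion--exclusion, or more directly by relating $\widetilde N$ to the affine count for the diagonal hypersurface. In the case $a\neq 0$: the affine diagonal hypersurface $\sum a_jX_j^m=-a$ in $\A^{n-i}$ has $\mathrm{pcl}$ nonsingular (its singular locus is $\{0\}$, hence empty at infinity and the projective closure is smooth), so Remark \ref{equation diagonal}, equation \eqref{eq diagonal b distinto 0}, gives an estimate of the shape $q^{n-i-1}+\mathcal O(q^{(n-i)/2})$ for the full affine count; then subtracting off the lower-dimensional contributions from coordinates being zero (which by induction on $i$, or by directly iterating the same diagonal estimates in fewer variables, contribute only $O(q^{(n-i)/2})$) yields the bound \eqref{cota sup  para a distinto de cero} with the stated constant $(m-1)^{n-i}$ and the factor $(1+q^{1/2})$ coming directly from the two error terms $b'_{r}q^{r/2}$ of Deligne's estimate \eqref{estimacion Deligne} at $r=n-i-2$. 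In the case $a=0$: the equation $\sum_{j\notin S}a_jX_j^m=0$ is homogeneous, so we are counting on a projective diagonal hypersurface in $\Pp^{n-i-1}$, which is nonsingular; applying the homogeneous estimate recorded in the final remarks of Section 3 (of the form $|\,|V_0(\fq)|-q^{r}\,|\leq (m-1)^{r}q^{(r-1)/2}(q-1)$) and again peeling off lower-strata contributions gives \eqref{cota sup si a es cero}, where the factor $(q-1)$ reflects the passage from projective to affine points and the restriction $i\leq n-2$ guarantees we stay in a regime ($n-i\geq 2$) where these projective estimates apply.

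The main obstacle I anticipate is bookkeeping the inclusion--exclusion cleanly so that the \emph{constants} come out exactly as stated rather than merely up to an absolute factor: one must check that when $\widetilde N$ is expressed in terms of the count with no constraint minus the counts on the coordinate sub-hyperplanes, the dominant Betti-number constant $(m-1)^{n-i}$ (resp. $(m-1)^{n-i-1}$) is not inflated, i.e. that the lower-dimensional strata are genuinely absorbed into the error term and do not compete with the main term. A clean way to handle this is induction on $i$ downward from $i=n-1$ (resp. $i=n-2$), where the base case is a one- or two-variable diagonal equation computed directly, and the inductive step uses the estimate for the nonsingular diagonal hypersurface in $n-i$ variables together with the inductive bounds for $N_{i+1},\dots,N_{n-1}$. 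The remaining steps — verifying nonsingularity of the relevant diagonal hypersurfaces (already done in Remark \ref{equation diagonal} for $a\neq 0$ and in the homogeneous remarks for $a=0$, since $\mathrm{char}(\fq)\nmid m$) and invoking \eqref{estimacion Deligne} — are routine.
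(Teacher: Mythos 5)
Your opening reduction is the right one: fixing the $i$ vanishing coordinates kills the monomial $bX_1^{k_1}\cdots X_n^{k_n}$ (all $k_j\geq 1$) and leaves the diagonal equation $\sum_{j\notin S}a_jX_j^m+a=0$ in the $n-i$ remaining variables. But from there you misread what $N_i$ counts, and the error is not cosmetic. In this proposition $N_i$ is the number of solutions of \eqref{Baoulina} with a \emph{fixed} set of $i$ coordinates equal to zero and \emph{no} constraint on the remaining ones; this is forced both by the main term $q^{n-i-1}$ and by the inclusion--exclusion identity $N^{=}=\sum_{i}(-1)^{i+1}\binom{n}{i}N_i$ used immediately afterwards (if $N_i$ counted solutions with exactly $i$ zero coordinates over all choices of positions, that identity would instead read $N^{=}=\sum_i N_i$, with no signs and no binomials). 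Under your reading, $N_i=\binom{n}{i}\widetilde N$ with $\widetilde N$ the count over $(\fq^{\times})^{\,n-i}$, the main term would be roughly $\binom{n}{i}(q-1)^{n-i}/q$ rather than $q^{n-i-1}$, and the stated bounds would simply be false: the binomial factor inflates the main term, and the boundary strata you propose to ``peel off'' contribute on the order of $(n-i)\,q^{n-i-2}$, which is not absorbed by the allowed error $O(q^{(n-i-1)/2})$ once $n-i>3$. This is also why you (correctly) anticipate trouble making the constants come out exactly: with that interpretation they cannot.

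Once $N_i$ is read correctly, the inclusion--exclusion and induction scaffolding disappears and the argument is the two-line one the paper intends. The reduced equation is a diagonal equation in $n-i$ variables. For $a\neq0$ one applies the nonsingular (Deligne) estimate of Remark \ref{equation diagonal} with $n$ replaced by $n-i$, which gives exactly $(m-1)^{n-i}q^{(n-i-2)/2}(1+q^{1/2})$. For $a=0$ the reduced equation is homogeneous, the associated projective diagonal hypersurface in $\Pp^{n-i-1}$ is nonsingular since $\mathrm{char}(\fq)\nmid m$ (this is where $i\leq n-2$ is needed), and the homogeneous remark at the end of Section 3, again with $n$ replaced by $n-i$, gives $(m-1)^{n-i-1}q^{(n-i-2)/2}(q-1)$ after converting projective counts to affine ones. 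Your identification of the relevant nonsingular diagonal hypersurfaces and of Deligne's estimate as the engine is correct; the gap is entirely in the set being counted.
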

	\begin{proof}
	Follows from  \eqref{Baoulina} and Remark \ref{equation diagonal}.	
%
	\end{proof}
Let $N^*$ be the number of $\fq$-rational solutions of \eqref{Baoulina} with nonzero coordinates and let $N^{=}$ be the number of $\fq$-rational solutions of \eqref{Baoulina} with at least one coordinate equal to zero. Note that $N^* =N- N^{=}$. 
By the inclusion-exclusion principle we obtain
\begin{equation}\label{Pcio inclusion exclusion}
N^{=}=\sum_{i=1}^n (-1)^{i+1} \binom{n}{i} N_i.
\end{equation}


Suppose that $a \neq 0$. 
From \eqref{Pcio inclusion exclusion}  and since $N_n=0$, we have
\begin{align*}
N^*-\frac{(q-1)^n}{q}&=N-N^{=}-\sum_{i=0}^n (-1)^i\binom{n}{i}q^{n-i-1}\\
&=N-\sum_{i=1}^n (-1)^{i+1} \binom{n}{i} N_i-\sum_{i=0}^n (-1)^i\binom{n}{i}q^{n-i-1}\\
&=N-q^{n-1}+\sum_{i=1}^{n-1}(-1)^i \binom{n}{i} (N_i-q^{n-i-1})+(-1)^n(N_n-q^{-1})\\
&=(N-q^{n-1})+\sum_{i=1}^{n-1}(-1)^i \binom{n}{i} (N_i-q^{n-i-1})-(-1)^n \frac{1}{q}.
\end{align*}
Thus,  we deduce that
\begin{equation*}
N^*-\frac{(q-1)^n-(-1)^n}{q}=(N-q^{n-1})+\sum_{i=1}^{n-1}(-1)^i \binom{n}{i} (N_i-q^{n-i-1}).
\end{equation*}
Therefore, from Theorem \ref{teo: estimacion eq de Baoulina} and \eqref{cota sup  para a distinto de cero}:
\begin{align*}\label{a distinto cero}
\Big|N^*-\frac{(q-1)^n-(-1)^n}{q}\Big| & \leq |N-q^{n-1}|+\sum_{i=1}^{n-1}\binom{n}{i}|N_i-q^{n-i-1}|\\
& \leq 6(2m)^n q^{n/2}+2 m^{n-1} \sum _{i=1}^{n-1}\binom{n}{i} q^{(n-i-1)/2}\\
&\leq 6(2m)^n q^{n/2}+ 2 m^{n-1} \frac{(\sqrt{q}+1)^n-\sqrt{q}^n-1}{\sqrt{q}} \\
&\leq 6(2m)^n q^{n/2}+ \frac{2}{m}(2 m)^n q^{\frac{n-1}{2}}\\
& \leq 6(2m)^nq^{n/2}+ (2m)^n q^{n/2}\\
& \leq 7(2m)^n q^{n/2}.
\end{align*}
We have proved the following result.
\begin{proposition}\label{estimation a distinto de cero}
 With the hypotheses as above and $q>2$, the number $N^*$ of $\fq$--rational solutions of \eqref{Baoulina} with nonzero coordinates  satisfies the following estimate:
 $$\Big|N^*-\frac{(q-1)^n-(-1)^n}{q}\Big|   \leq 7(2m)^n q^{n/2}.$$
\end{proposition}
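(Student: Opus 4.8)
The plan is to reduce the count $N^{*}$ of solutions with every coordinate nonzero to the total count $N$ and to the counts $N_i$ of solutions with a fixed set of $i$ coordinates equal to zero, via inclusion--exclusion, and then to substitute the estimate of Theorem~\ref{teo: estimacion eq de Baoulina} for $N$ together with the bound for $N_i$ in the case $a\neq 0$ established above.

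First I would set $N^{=}=N-N^{*}$ for the number of solutions having at least one vanishing coordinate. By the symmetry of the equation in $X_1,\dots,X_n$, the number of solutions whose zero coordinates form a prescribed set of size $i$ depends only on $i$; calling this count $N_i$, inclusion--exclusion gives $N^{=}=\sum_{i=1}^{n}(-1)^{i+1}\binom{n}{i}N_i$. I would then subtract from $N^{*}$ the main term $\frac{(q-1)^n}{q}=\sum_{i=0}^{n}(-1)^{i}\binom{n}{i}q^{n-i-1}$ and compare it termwise with $N-N^{=}$: the $i=0$ term produces $N-q^{n-1}$, while the $i=n$ term produces $-(-1)^n q^{-1}$, since $a\neq 0$ forces $N_n=0$. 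Transferring the latter to the left-hand side yields the identity
$$N^{*}-\frac{(q-1)^n-(-1)^n}{q}=(N-q^{n-1})+\sum_{i=1}^{n-1}(-1)^{i}\binom{n}{i}\bigl(N_i-q^{n-i-1}\bigr).$$

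Next I would bound the right-hand side by the triangle inequality. For $\bigl|N-q^{n-1}\bigr|$ I would invoke Theorem~\ref{teo: estimacion eq de Baoulina} and absorb the outcome into $6(2m)^n q^{n/2}$ using the crude inequalities $m+2\le 2m$, $m-1\le m$ and $q^{(n-1)/2}\le q^{n/2}$, which hold under the standing hypotheses $m\ge 2$ and $q>2$. For each $\bigl|N_i-q^{n-i-1}\bigr|$ with $1\le i\le n-1$ I would use the bound for $N_i$ in the case $a\neq 0$, estimate $(m-1)^{n-i}\le m^{n-1}$ and $1+q^{1/2}\le 2q^{1/2}$, and then sum the resulting series via $\sum_{i=1}^{n-1}\binom{n}{i}q^{(n-i-1)/2}=q^{-1/2}\bigl((\sqrt q+1)^n-q^{n/2}-1\bigr)\le q^{-1/2}(2\sqrt q)^n$; a short check, again using $m\ge 2$ and $q>2$, shows this contribution is at most $(2m)^n q^{n/2}$. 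Adding the two pieces gives the claimed bound $7(2m)^n q^{n/2}$.

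No geometry enters here: the only ingredients are the two estimates already proved. The single point that needs care is the combinatorial bookkeeping, namely arranging that the leftover $q^{-1}$ terms assemble precisely into $\frac{(q-1)^n-(-1)^n}{q}$, and then choosing inequalities crude enough that every summand collapses to a single power $q^{n/2}$ with an explicit constant while still respecting the mild hypothesis $q>2$.
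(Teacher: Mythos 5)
Your proposal is correct and follows essentially the same route as the paper: the identical inclusion--exclusion identity (using $N_n=0$ when $a\neq 0$ to produce the $-(-1)^n/q$ correction), the same triangle-inequality split, and the same absorption of $|N-q^{n-1}|$ into $6(2m)^nq^{n/2}$ and of the binomial sum $q^{-1/2}\bigl((\sqrt q+1)^n-q^{n/2}-1\bigr)$ into $(2m)^nq^{n/2}$, yielding the constant $7$. Nothing further is needed.
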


In \cite{Mo63}   Mordell studies the following equation:
\begin{equation*}\label{Mordel}
(a_1X_1^{m_1}+\cdots+ a_nX_n^{m_n}+a)^k=bX_1^{k_1}\cdots X_n^{k_n},
\end{equation*}
 where $a_1,\ldots, a_n,a$ are nonzero elements of $\fq$, $k,m_1,\ldots,m_n\in \N$, $k_1,\ldots,k_n$ are nonnegative integers and $n\geq 2$. The author shows that if $k_1=\cdots = k_n=1$ and $q=p$ then
\begin{equation}\label{Mordell}\Big|N^*-\frac{(q-1)^n}{q}\Big| \leq d_1 \cdots d_n q^{n/2},\end{equation} where $d_i=\gcd(m_i,q-1).$ 
Observe that Proposition \ref{estimation a distinto de cero}  improves \eqref{Mordell} when $k=1$ and $m=m_1=\cdots = m_n$. Indeed, our result holds for all $q>2$ with $\mathrm{char}(\fq)$  not dividing $m$ and  $m>k_1+\cdots+k_n$. 
Moreover,  we determine one more term in the asymptotic development in terms of $q$. Namely, we prove $N^*= \frac{(q-1)^n-(-1)^n}{q}+ \mathcal{O}(q^{n/2})$ instead of
$N^*=\frac{(q-1)^n}{q} + \mathcal{O}(q^{n/2}).$ Observe that this term appears in the asymptotic development of $N^*$ when $a=0$ (see, e.g., \cite[Theorem 3.1]{Ba15}).
Now, we provide an existence result for $\fq$--rational solutions with nonzero coordinates.
From Proposition \ref{estimation a distinto de cero} we deduce that
\begin{align*}
N^* &\geq \frac{(q-1)^n-(-1)^n}{q}-7(2m)^n q^{n/2}\\
& \geq \frac{(q-1)^n}{2q}-7(2m)^n q^{n/2}\\
& \geq  q^{(n-2)/2}(2m)^n \Big( \frac{(q-1)^{n}}{2(2m)^nq^{n/2}}-7q\Big) \\
& \geq q^{(n-2)/2}(2m)^n \Big( \frac{1}{2 (2m)^n}\big(\frac{(q-1)^2}{q}\big)^{n/2}-7q\Big) \\
& \geq q^{(n-2)/2}(2m)^n \Big( \frac{(q-2)^{n/2}}{2(4m^2)^{n/2}}-7q\Big) 
\end{align*}
From  Bernoulli's inequality 
we obtain
\begin{align*}
N^* & \geq  q^{(n-2)/2}(2m)^{n} \Big( \frac{n}{4}\Big(\frac{q-2-4m^2}{4m^2}\Big)-7q\Big) 
\end{align*}
Therefore, \eqref{Baoulina} has at least one solution in $\fq^n$ with nonzero coordinates if 
$$\frac{n}{4}\Big(\frac{q-2-4m^2}{4m^2}\Big)-7q>0.$$
That is,
$$n(q-2-4m^2) -112m^2q >0,$$
which is equivalent to
$$ (q-2-4m^2)(n-112m^2)- 112m^2(2+4m^2)>0.$$
We conclude that if $n > 112m^2$  the equation  \eqref{Baoulina} has at least a nontrivial solution in $(\fq^*)^n$  for
$$q >\frac{112 m^2 (2+4m^2)}{n-112m^2}+2+4m^2.$$
We have proved:
\begin{proposition}\label{condition existence eq Baoulina}
If   $q >\frac{112 m^2 (2+4m^2)}{n-112m^2}+2+4m^2$, $\mathrm{char}(\fq)$ does not divide $m$ and $n > 112m^2$ then the equation \eqref{Baoulina} has at least one solution in $(\fq^*)^n$ for the case $a\neq 0$.
\end{proposition}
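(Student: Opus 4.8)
The plan is to deduce the existence statement directly from the explicit estimate of Proposition~\ref{estimation a distinto de cero}, by a chain of elementary inequalities that isolates a sufficient condition for $N^{*}>0$. First I would invoke the one‑sided consequence of that estimate,
\[
N^{*}\;\ge\;\frac{(q-1)^{n}-(-1)^{n}}{q}-7(2m)^{n}q^{n/2},
\]
which is valid for $q>2$ (and requires $\mathrm{char}(\fq)\nmid m$, so that Proposition~\ref{estimation a distinto de cero}, hence Theorem~\ref{teo: estimacion eq de Baoulina}, applies). Since the eventual hypothesis on $q$ forces $q>2+4m^{2}\ge 6$, one has $(q-1)^{n}\ge 2$, so the first summand may be replaced by the cleaner quantity $(q-1)^{n}/(2q)$. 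Factoring $q^{(n-2)/2}(2m)^{n}$ out of both terms reduces the problem to proving
\[
\frac{1}{2(2m)^{n}}\Big(\frac{(q-1)^{2}}{q}\Big)^{n/2}-7q\;>\;0 .
\]

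Next I would simplify the main term with two elementary estimates: $\frac{(q-1)^{2}}{q}\ge q-2$ (immediate from $(q-1)^{2}=q(q-2)+1$), and Bernoulli's inequality in the form $\big(\tfrac{q-2}{4m^{2}}\big)^{n/2}=\big(1+\tfrac{q-2-4m^{2}}{4m^{2}}\big)^{n/2}\ge 1+\tfrac{n}{2}\cdot\tfrac{q-2-4m^{2}}{4m^{2}}$, whose application is legitimate because $n/2\ge 1$ and, under the hypothesis, $\tfrac{q-2-4m^{2}}{4m^{2}}\ge 0>-1$. Using $(2m)^{n}=(4m^{2})^{n/2}$ and discarding the harmless additive constant, this shows it suffices to have $\tfrac{n}{4}\cdot\tfrac{q-2-4m^{2}}{4m^{2}}-7q>0$, equivalently $n(q-2-4m^{2})-112\,m^{2}q>0$.

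Finally I would rewrite $112\,m^{2}q=112\,m^{2}(q-2-4m^{2})+112\,m^{2}(2+4m^{2})$ to recast this last inequality as
\[
(n-112\,m^{2})(q-2-4m^{2})\;>\;112\,m^{2}(2+4m^{2}),
\]
and then solve for $q$: since $n>112\,m^{2}$ makes the coefficient of $q-2-4m^{2}$ positive, the inequality is equivalent to $q>\frac{112\,m^{2}(2+4m^{2})}{n-112\,m^{2}}+2+4m^{2}$, which is exactly the stated hypothesis; hence $N^{*}>0$. There is no deep obstacle here: the argument is bookkeeping, and the only points requiring a moment's care are checking that the auxiliary simplifications ($(q-1)^{n}\ge 2$ and the sign condition for Bernoulli) are genuinely implied by $q>2+4m^{2}$, and that the standing assumption $q>2$ behind Proposition~\ref{estimation a distinto de cero} is subsumed. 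For a self‑contained write‑up the cleanest option is simply to reproduce the displayed chain of inequalities preceding the statement, justifying each step as above.
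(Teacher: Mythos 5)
Your proposal is correct and follows essentially the same route as the paper: it starts from the lower bound $N^{*}\ge \frac{(q-1)^{n}-(-1)^{n}}{q}-7(2m)^{n}q^{n/2}$ of Proposition~\ref{estimation a distinto de cero}, factors out $q^{(n-2)/2}(2m)^{n}$, uses $\frac{(q-1)^{2}}{q}\ge q-2$ and Bernoulli's inequality, and rearranges the resulting sufficient condition into the stated bound on $q$. Your added checks (that $q>2+4m^{2}$ justifies replacing the leading term by $(q-1)^{n}/(2q)$ and validates the Bernoulli step) are exactly the implicit justifications of the paper's displayed chain of inequalities.
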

\begin{remark}
It can be shown that if  $q> (2m)^{\frac{2n}{n-4}}$, $m \geq 3$ and $n \geq 5$, then the equation \eqref{Baoulina} has at least one solution in $(\fq^*)^n$. This existence result holds for far more values of $n$ although it requires larger values of $q$.
\end{remark}

\begin{remark}\label{estimation a equals 0}
 Suppose that $a = 0$. With  the same hypotheses and arguments of  Theorem \ref{teo: estimacion eq de Baoulina} and taking into account \eqref{cota sup si a es cero}, we deduce that the number $N^*$ of $\fq$--rational solutions of \eqref{Baoulina} with nonzero coordinates  satisfies the following estimate:
\begin{equation} \label{estimacion a=0 coord no nulas}
\Big|N^*-\frac{(q-1)^n}{q}-(-1)^n(n+1-\frac{1}{q})\Big| \leq 7(2m)^n q^{n/2}.
\end{equation}
\end{remark}

For $a=0$, $k_1=\cdots=k_n=1$, $n \geq 2$ and $1 \leq m\leq q-1$, Baoulina \cite{Ba15} shows that
\begin{equation}\label{estimation Baoulina}\Big|N^*-\frac{(q-1)^n-(-1)^n}{q}\Big| \leq (d_0 d_1^{n-1}-1)q^{(n-1)/2}+(d-d_0)d_1^{n-1}q^{(n-2)/2},
\end{equation}
where $d_1=\gcd(m,q-1)$, $d=\gcd(n-km,\frac{q-1}{d_1})$ and $d_0=\gcd(d,k)$.
Although \eqref{estimation Baoulina} is better than \eqref{estimacion a=0 coord no nulas} when $k_1=\cdots=k_n=1$,  our estimate extends   \eqref{estimation Baoulina} for the case $k_1+\cdots+k_n<m$. Moreover, we determine one more term in  the asymptotic development of $N^*$ in terms of $q$, namely  $N^*= \frac{(q-1)^n-(-1)^n}{q}+(-1)^n(n+1)+ \mathcal{O}(q^{n/2})$.

\subsection{Carlitz's equations}
 Let  $d,n$ be positive integers with $d\geq 2$ and $n\geq 3$. Let $h_i=a_{d,i} T^d+\cdots +a_{0,i} \in \fq[T]$, with $\deg(h_i)=d$, $1\leq i\leq n$. Let $g\in \fq[X_1,\ldots,X_n]$ such that $\deg (g)< d$. Suppose that $\mathrm{char}(\fq)$  does not divide $d$. Carlitz's equations are:
\begin{equation}\label{eq. carlitz}
h_1(X_1)+\cdots +h_n(X_n)=g.
\end{equation} 
Denote by $N$ the number of $\fq$-rational solutions of \eqref{eq. carlitz}. Let $R_g, V_g$ and $\Sigma_g$ be defined as usual. 
 A given ${\bf{x}}\in \Sigma_g$ satisfies the following equations: 
$$Q_j:=h_j'(X_j)-\frac{\partial g}{\partial X_j}=0,\,\,\,1\leq j \leq n.$$
By the same arguments as before we can show that $Q_j$, $1\leq j \leq n$, form a regular sequence of $\fq[X_1,\ldots,X_n]$ and $\mathrm{pcl}(V_g)$ has no singular points at infinity. Thus, we obtain the following result.
\begin{theorem}
Let $n\geq 3$ and  $d\geq 2$ not divisible by $\mathrm{char}(\fq)$. Then the singular locus of $\mathrm{pcl}(V_g)$ has dimension at most $0$ and $V_g$ is absolutely irreducible.
\end{theorem}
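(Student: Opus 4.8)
The plan is to mimic, in a streamlined way, the arguments already used for a linear $f$ (Proposition~\ref{prop: singular locus f linear}, Corollary~\ref{singular locus plc(Vg) at infinity with f linear}, Corollary~\ref{teo: V es absolutamente irreducible f linear}), since the Carlitz polynomial $R_g = h_1(X_1)+\cdots+h_n(X_n)-g$ has a separated-variables leading part that behaves just like the linear case. First I would compute $\nabla R_g$: for $\mathbf{x}\in\Sigma_g$ one gets the $n$ equations $Q_j := h_j'(X_j) - \partial g/\partial X_j = 0$ for $1\le j\le n$, as already noted in the excerpt. The key observation is that $h_j'(X_j) = d\,a_{d,j}X_j^{d-1}+\cdots$ has degree exactly $d-1$ (here we use that $\operatorname{char}(\fq)\nmid d$ and $a_{d,j}\ne 0$), while $\deg(\partial g/\partial X_j) < d-1$ since $\deg g < d$; hence $Q_j$ is a nonzero polynomial whose term of highest degree involves only the variable $X_j$.

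Next I would invoke the Gröbner-basis argument exactly as in the proof of Proposition~\ref{prop: singular locus f linear}: fix the graded lexicographic order on $\fq[X_1\klk X_n]$ with $X_n>\cdots>X_1$; then $\mathrm{Lt}(Q_j)$ is (a nonzero scalar times) $X_j^{d-1}$, so the leading terms $\mathrm{Lt}(Q_1)\klk\mathrm{Lt}(Q_n)$ are pairwise relatively prime. By \cite[\S2.9, Proposition~4]{CLO92} they form a Gröbner basis of the ideal they generate, so the initial ideal of $(Q_1\klk Q_n)$ is generated by $X_1^{d-1}\klk X_n^{d-1}$, which form a regular sequence in $\fq[X_1\klk X_n]$. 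By \cite[Proposition~15.15]{Eisenbud95}, $Q_1\klk Q_n$ themselves form a regular sequence, so $V(Q_1\klk Q_n)\subset\A^n$ has dimension $0$; since $\Sigma_g\subset V(Q_1\klk Q_n)$, we conclude $\dim\Sigma_g\le 0$. (The case $g\in\fq$ is identical, with $\partial g/\partial X_j=0$.)

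For the behaviour at infinity, I would note that $R_g^{\deg(R_g)} = a_{d,1}X_1^d+\cdots+a_{d,n}X_n^d$ because $\deg g < d$, so $R_g^h(0,X_1\klk X_n) = \sum_i a_{d,i}X_i^d$. A point $\mathbf{x}=(0:x_1:\cdots:x_n)\in\Sigma_g^\infty$ would then satisfy $d\,a_{d,j}x_j^{d-1}=0$ for all $j$, forcing $\mathbf{x}=\mathbf{0}$, which is not a projective point; hence $\mathrm{pcl}(V_g)$ has no singular points at infinity. Combining this with $\dim\Sigma_g\le 0$ gives that the singular locus of $\mathrm{pcl}(V_g)\subset\Pp^n$ has dimension at most $0$. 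Finally, absolute irreducibility follows exactly as in Corollary~\ref{teo: V es absolutamente irreducible}: if $\mathrm{pcl}(V_g)$ decomposed nontrivially, any two components would meet in a subvariety of dimension $n-2$ contained in the singular locus, contradicting $\dim\Sigma_g^h\le 0 < n-2$ (valid since $n\ge 3$).

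I do not anticipate a serious obstacle here: the whole argument is a direct transcription of the linear-$f$ case, the only point requiring a moment's care being the verification that $\deg h_j' = d-1$ and that it strictly dominates $\deg(\partial g/\partial X_j)$ — this is where the hypotheses $\deg g < d$ and $\operatorname{char}(\fq)\nmid d$ enter and must be cited explicitly. The routine check that pairwise-coprime univariate leading terms of distinct variables form a Gröbner basis (and hence the lifting of the regular sequence property) is the technical engine, but it is already established in the paper and can simply be invoked.
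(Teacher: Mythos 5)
Your proposal is correct and follows exactly the route the paper intends: the paper's own proof is only the one-line remark ``by the same arguments as before,'' and you have filled in precisely those arguments (leading terms $d\,a_{d,j}X_j^{d-1}$ of the $Q_j$, the Gr\"obner-basis/regular-sequence lifting, the diagonal leading form at infinity, and the component-intersection argument for absolute irreducibility). No discrepancies to report.
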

Following the same reasoning as in the proof of Theorem  \ref{estimation with f linear} we conclude: 
\begin{theorem}\label{estimation N}
	Let $n\geq 3$ and  $d\geq 2$ not divisible by $\mathrm{char}(\fq)$. Then 	
	\begin{equation}\label{estimation Nbis}
	\big|N-q^{n-1}\big|\leq   q^{(n-1)/2}\big(2(d-1)^{n-1}q^{1/2}+6(d+2)^n\big).
	\end{equation}
\end{theorem}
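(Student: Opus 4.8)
The plan is to follow the strategy of the proof of Theorem~\ref{estimation with f linear} verbatim, feeding in the structural facts just recorded for the Carlitz hypersurface: $\mathrm{pcl}(V_g)\subset\Pp^n$ is absolutely irreducible, of degree $\delta=\deg(R_g)$, with singular locus of dimension at most $0$. First I would pin down $\delta$: since $\deg(h_i)=d$ for every $i$ and $\deg(g)<d$, the component of highest degree of $R_g=h_1(X_1)+\cdots+h_n(X_n)-g$ is $R_g^{\deg R_g}=a_{d,1}X_1^d+\cdots+a_{d,n}X_n^d$, whence $\delta=d\geq 2$. Then the Ghorpade--Lachaud estimate \eqref{eq: estimacion de Ghorpade Lachaud}, applied to $\mathrm{pcl}(V_g)\subset\Pp^n$ with singular-locus dimension $s=0$, combined with \eqref{eq:upper bound sums betti numbers} and \eqref{eq:upper bound betti number}, gives
$$\big||\mathrm{pcl}(V_g)(\fq)|-p_{n-1}\big|\leq (d-1)^{n-1}q^{n/2}+6(d+2)^{n}q^{(n-1)/2}.$$

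Next I would control the locus at infinity $V_g^{\infty}=\mathrm{pcl}(V_g)\cap\{X_0=0\}=V(R_g^{\deg R_g})\subset\Pp^{n-1}$. Since $R_g^{\deg R_g}=\sum_{i=1}^n a_{d,i}X_i^d$ with each $a_{d,i}\neq 0$ and $\mathrm{char}(\fq)\nmid d$, the partial derivatives $d\,a_{d,i}X_i^{d-1}$ have only the origin as common zero, so $V_g^{\infty}$ is a nonsingular hypersurface of $\Pp^{n-1}$ of dimension $n-2$ and degree $d$ (this is the same computation underlying the assertion, recorded just before the previous theorem, that $\mathrm{pcl}(V_g)$ has no singular points at infinity). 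Deligne's estimate \eqref{estimacion Deligne}, together with the Betti number bound \eqref{eq:upper bound betti number}, then yields
$$\big||V_g^{\infty}(\fq)|-p_{n-2}\big|\leq (d-1)^{n-1}q^{(n-2)/2}.$$

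Finally, using $N=|V_g(\fq)|=|\mathrm{pcl}(V_g)(\fq)|-|V_g^{\infty}(\fq)|$ and $q^{n-1}=p_{n-1}-p_{n-2}$, the triangle inequality applied to the two displays above gives
$$\big|N-q^{n-1}\big|\leq (d-1)^{n-1}q^{n/2}+6(d+2)^{n}q^{(n-1)/2}+(d-1)^{n-1}q^{(n-2)/2};$$
factoring out $q^{(n-1)/2}$ and bounding $q^{-1/2}\leq q^{1/2}$ collapses the right-hand side to $q^{(n-1)/2}\bigl(2(d-1)^{n-1}q^{1/2}+6(d+2)^n\bigr)$, which is \eqref{estimation Nbis}. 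I do not expect a genuine obstacle here: every nontrivial ingredient (absolute irreducibility, the bound on the dimension of the singular locus, the nonsingularity at infinity) has already been established in the preceding theorem, so what remains is purely the bookkeeping of substituting into \eqref{eq: estimacion de Ghorpade Lachaud} and \eqref{estimacion Deligne} and tidying up the powers of $q$. The one point meriting a moment's care is that the hypothesis $d\geq 2$ is genuinely used, since \eqref{eq: estimacion de Ghorpade Lachaud} requires the hypersurface to have degree at least $2$; the case $n=3$ causes no trouble, as then $V_g^{\infty}$ is merely a nonsingular plane curve of degree $d$.
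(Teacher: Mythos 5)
Your argument is correct and is exactly the route the paper takes: it invokes the preceding theorem (singular locus of $\mathrm{pcl}(V_g)$ of dimension at most $0$, absolute irreducibility, nonsingularity at infinity) and then repeats the bookkeeping of the proof of Theorem \ref{estimation with f linear} with $m_i$ replaced by $\delta=d$, combining \eqref{eq: estimacion de Ghorpade Lachaud} for $\mathrm{pcl}(V_g)$ with \eqref{estimacion Deligne} for $V_g^{\infty}$ via the triangle inequality. Nothing in your write-up deviates from the paper's proof in substance.
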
 
\begin{remark} Observe that for $h_1=\cdots=h_n=h$ then the equation \eqref{eq. carlitz} can be written as  $$a_1 P_1+ \cdots +a_dP_d+ na_0=g.$$ Thus, \eqref{eq. carlitz} can be expressed as $f(P_1,\dots, P_d)+ na_0=g$, where $f\in \fq[Y_1, \dots, Y_d]$ is the linear polynomial $f:=a_1Y_1+\dots+a_dY_d +na_0$. We can then  apply Theorem \ref{estimation with f linear} to obtain a similar result as the one in the previous theorem. 
\end{remark}

Carlitz \cite{Ca53} studies the number of $\fq$--rational solutions of the equation $$h_1(X_1) +\cdots+h_n(X_n)=\alpha,$$ where $h_i \in \fq[T]$ with $2 <\deg(h_i)=k_i<\mathrm{char}(\fq)$ and $\alpha \in \fq$. More precisely, he  proves that the number $N$ of $\fq$-rational solutions of this equation is given by
\begin{equation}\label{estimation Nbisbis}
N=q^{n-1}+ \mathcal{O}(q^{n-w}),
\end{equation} 
where $w=\frac{1}{k_1}+ \cdots+\frac{1}{k_n}$ and the constant implied by the $\mathcal{O}$ is not explicitly given.
Theorem \ref{estimation N} improves \eqref{estimation Nbisbis} in several aspects if $\deg(h_i)=d$, $1\leq i\leq n$. On one hand,  Theorem \ref{estimation N} gives an explicit estimate on the number $N$. On the other hand, the equation can be matched to a non-necessarily constant polynomial.
Finally, our result implies $N=q^{n-1}+\mathcal{O}(q^{n/2})$ while  \eqref{estimation Nbisbis} implies that $N=q^{n-1}+ \mathcal{O}(q^{n-w})=q^{n-1}+\mathcal{O}(q^{n/2+\epsilon})$,  where $\epsilon=n(\frac{1}{2}-\frac{1}{d})>0$ and $w=\frac{n}{d}.$

Regarding existence results, Carlitz \cite{Ca56} shows that if  $\deg(h_i)=n$, $1\leq i \leq n$ and $n$ divides $\mathrm{char}(\fq)-1$, then \eqref{eq. carlitz} has at least one solution in $\fq^n$. From Theorem \ref{estimation N}  we can find conditions which imply that \eqref{eq. carlitz}  has at least one solution in $\fq^n$ when $d\geq 2$ and $n\geq 3$.
\begin{theorem}
	Let $q > (d+2)^{\frac{2n}{(n-2)}}$, $ d \geq 2$ not divisible by $\mathrm{char}(\fq)$  and $ n\geq 3$. Let  $h_i \in \fq[T]$ be defined by $h_i=a_{d,i} T^d+\cdots +a_{0,i}$, $1\leq i \leq n$, with $\deg(h_i)=d$ and $g\in \fq[X_1,\ldots,X_n]$ with $\deg(g)<d$. Then the equation $h_1(X_1)+ \cdots + h_n(X_n)=g$ has at least one solution in $\fq^n$.
\end{theorem}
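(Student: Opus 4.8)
The plan is to deduce the existence of an $\fq$-rational solution directly from the explicit estimate of Theorem~\ref{estimation N}, exactly as in the proof of Theorem~\ref{teo:existence}. Write $N$ for the number of $\fq$-rational solutions of $h_1(X_1)+\cdots+h_n(X_n)=g$. Theorem~\ref{estimation N} gives
\[
\bigl|N-q^{n-1}\bigr|\ \le\ 2(d-1)^{n-1}q^{n/2}+6(d+2)^{n}q^{(n-1)/2},
\]
so the whole proof reduces to showing that the right-hand side is strictly smaller than $q^{n-1}$ under the hypothesis $q>(d+2)^{2n/(n-2)}$.

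First I would collapse the two error terms into a single clean one. Since $d\ge 2$ we have $(d+2)^{n}=(d+2)(d+2)^{n-1}\ge 4(d+2)^{n-1}>4(d-1)^{n-1}$, so that $2(d-1)^{n-1}<\tfrac{1}{2}(d+2)^{n}$; and, provided $q>144$, one has $6q^{(n-1)/2}<\tfrac{1}{2}q^{n/2}$, hence $6(d+2)^{n}q^{(n-1)/2}<\tfrac{1}{2}(d+2)^{n}q^{n/2}$. Adding the two estimates gives $\bigl|N-q^{n-1}\bigr|\le (d+2)^{n}q^{n/2}$, whence $N\ge q^{n-1}-(d+2)^{n}q^{n/2}$. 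The last step is purely numerical: the right-hand side is positive exactly when $q^{(n-2)/2}>(d+2)^{n}$, that is, when $q>(d+2)^{2n/(n-2)}$, which is precisely the hypothesis. Hence $N>0$ and the equation has a solution in $\fq^n$. I would finish with a short remark comparing this existence range with those of Carlitz (the case $\deg h_i=n$ with $n\mid\mathrm{char}(\fq)-1$) and of Felszeghy, in parallel with the remark following Theorem~\ref{teo:existence}.

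The only step that genuinely needs care is the inequality $6q^{(n-1)/2}<\tfrac{1}{2}q^{n/2}$, i.e.\ $q>144$, used when shrinking the error term: it is immediate from $q>(d+2)^{2n/(n-2)}$ whenever $(d+2)^{2n/(n-2)}\ge 144$ --- which covers $n\le 4$ for every $d\ge 2$, and every $n$ once $d+2\ge 12$ --- while in the remaining finite family of parameter ranges one must verify $2(d-1)^{n-1}q^{n/2}+6(d+2)^{n}q^{(n-1)/2}<q^{n-1}$ directly, starting from $q^{(n-2)/2}>(d+2)^{n}$. Apart from this bookkeeping there is no real obstacle: the substantive input --- the absolute irreducibility of $\mathrm{pcl}(V_g)$ together with the fact that its singular locus has dimension at most $0$, both already established above --- is what makes the Ghorpade--Lachaud estimate underlying Theorem~\ref{estimation N} available.
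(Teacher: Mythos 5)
Your route is the paper's route: the authors give no separate proof here, but clearly intend the argument of Theorem~\ref{teo:existence} with $m$ replaced by $d$, namely absorbing the two error terms of Theorem~\ref{estimation N} into a single bound $(d+2)^n q^{n/2}$ and observing that $q^{n-1}-(d+2)^nq^{n/2}>0$ is exactly the hypothesis $q>(d+2)^{2n/(n-2)}$. You reproduce this faithfully, and you deserve credit for isolating the one step the paper passes over in silence: the absorption $6(d+2)^nq^{(n-1)/2}<\tfrac12(d+2)^nq^{n/2}$ requires $q>144$, which does \emph{not} follow from $q>(d+2)^{2n/(n-2)}$ once $2\le d\le 9$ and $n\ge 5$ (the exponent $2n/(n-2)$ decreases to $2$, so for $d=2$ the hypothesis can be met by $q$ as small as $17$).

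However, your proposed repair does not close this gap. First, the exceptional set is not ``a finite family of parameter ranges'': for each $d\le 9$ it contains all sufficiently large $n$. Second, and more seriously, the ``direct verification'' you defer to genuinely fails for some admissible parameters, so no amount of bookkeeping starting from $q^{(n-2)/2}>(d+2)^n$ can succeed there. Concretely, take $d=2$, $n=100$, $q=17$: then $(d+2)^{2n/(n-2)}=4^{200/98}\approx 16.93<17$, so all hypotheses of the theorem hold, yet $\ln\bigl(6\cdot 4^{100}\bigr)\approx 140.42>140.24\approx\ln\bigl(17^{99/2}\bigr)$, i.e.\ $6(d+2)^nq^{(n-1)/2}>q^{n-1}$, and the estimate of Theorem~\ref{estimation N} gives no information on the sign of $N-0$. (The same phenomenon occurs for all $94\le n\le 105$ with $d=2$, $q=17$.) So as stated the theorem is not a consequence of Theorem~\ref{estimation N}; the honest conclusions are either the statement with the strengthened hypothesis $q>\max\{144,(d+2)^{2n/(n-2)}\}$ (which is what the paper's computation actually proves, and which is automatic for $n\le 4$ or $d\ge 10$), or a separate argument for the small-$q$, small-$d$, large-$n$ range. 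This defect is inherited from the paper --- the proof of Theorem~\ref{teo:existence} opens with ``Observe that if $q>144$\dots'' without ever securing that inequality --- but since you explicitly undertook to patch it, the patch has to be flagged as incorrect.
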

\begin{remark} Theorem \ref{estimation N} provides an upper bound of Waring's number for univariate polynomials over $\fq$.
 Waring's problem consists in finding the minimum number of variables such that the equation $X_1^d + \dots+X_n^d=\beta$ has solutions for any natural number $\beta$. This minimum number is called the \textit{Waring's number associated to $d$}. The Waring's problem has also been considered for equations over finite fields and there are many bounds for their Waring number (see, e. g.,  \cite[Chapter 13]{MuPa13}). 

Consider the following generalization of Waring's problem: given a polynomial $h \in \fq[T]$ of degree $d$, find the minimum number of variables such that
\begin{equation}\label{waring problem 1}
h(X_1)+ \dots+h(X_n)=\beta
\end{equation}
has a solution in $\fq^n$ for any $\beta \in \fq$.  We denote this number by $\gamma(h,q)$. Carlitz  \cite{CaLeMiSt61} proves that if $q$ is a prime number then $\gamma(h,q) \leq d$  whenever $d \neq p-1, \frac{p-1}{2}$. On the other hand, Castro et. al.  \cite{CaRuVe08}  obtain an upper bound on $\gamma(h,q)$ for polynomials of the form $h=aT^d+g \in \fq[T]$, where $d \neq 1$ divides $p-1$ and  $g$ satisfies  certain hypothesis  related to $p$-weight degree of $g$.

Let $N$ be  the  number of $\fq$--rational solutions of equation \eqref{waring problem 1}. Suppose that $d \geq 2$ and $n \geq 3$. From Theorem \ref{estimation N} we have that
$$\big|N-q^{n-1}\big|\leq   q^{(n-1)/2}\big(2(d-1)^{n-1}q^{1/2}+6(d+2)^n\big).$$

On one hand, $q > 144$ implies that $6(d+2)^n < 1/2 (d+2)^n q^{1/2}$. On the other hand, $2 (d-1)^{n-1} < 1/2 (d+2)^n$ holds. Thus, $N>0$ provided that $q^{n/2}\big(q^{(n-2)/2}-(d+2)^n)>0;$ that is
 $q^{(n-2)/2} >(d+2)^n$. 
Now, if $q > (d+2)^2$ the condition $q^{(n-2)/2} >(d+2)^n$  is equivalent to
$$n > \frac{log(q^2)}{log(\frac{q}{(d+2)^2})}.$$
We conclude that  if $n > \frac{log(q^2)}{log(\frac{q}{(d+2)^2})}$ then 
$\gamma(h,q) \leq \Big\lceil  \frac{log(q^2)}{log(\frac{q}{(d+2)^2})} \Big\rceil.$ On the other hand, if $q > (d+2)^{2(d-1)/(d-3)}$, then we obtain that $\Big\lceil  \frac{log(q^2)}{log(\frac{q}{(d+2)^2})} \Big\rceil \leq d.$ Thus, we have the following result.
\begin{theorem}
	Let $n\geq 3$ and $d\geq 4$. Assume that $\mathrm{char}(\fq)$ does not divide $d$. For any $q > (d+2)^{2(d-1)/(d-3)}$ and any $h \in \fq[T]$ of degree $d$, we have that
	$$\gamma(h,q) \leq \Big\lceil  \frac{log(q^2)}{log(\frac{q}{(d+2)^2})} \Big\rceil$$
\end{theorem}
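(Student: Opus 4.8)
The statement is extracted from the estimate of Theorem~\ref{estimation N} by purely elementary manipulations, so the plan is to make that extraction precise and to check the numerical thresholds.

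\textbf{Set-up.} By definition $\gamma(h,q)$ is the least $n$ for which $h(X_1)+\cdots+h(X_n)=\beta$ is solvable in $\fq^n$ for every $\beta\in\fq$; moreover, solvability for all $\beta$ with $n$ variables forces the same with $n+1$ variables (fix $X_{n+1}$ arbitrarily and absorb $h(X_{n+1})$ into the constant term). Hence it suffices to produce one integer $n_0\le\bigl\lceil \log(q^2)/\log(q/(d+2)^2)\bigr\rceil$ such that, for every $\beta$, the number $N=N(\beta)$ of $\fq$--rational points of $\{h(X_1)+\cdots+h(X_{n_0})=\beta\}$ is positive; I take $n_0$ to be that ceiling. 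Since $d\ge4$ one has $2(d-1)/(d-3)\ge6$, so the hypothesis gives $q>(d+2)^6$; in particular $q>(d+2)^2$ (hence $\log(q/(d+2)^2)>0$ and $n_0$ is well defined) and $q>144$. Also $(d+2)^4>1$ yields $\log(q^2)>2\log(q/(d+2)^2)$, so the quotient exceeds $2$ and therefore $n_0\ge3$. Thus Theorem~\ref{estimation N} applies with $n=n_0$, $h_1=\cdots=h_{n_0}=h$, and $g=\beta$ a constant of degree $<d$.

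\textbf{From the estimate to positivity.} Theorem~\ref{estimation N} gives
\begin{equation*}
\big|N-q^{\,n_0-1}\big|\le q^{(n_0-1)/2}\bigl(2(d-1)^{n_0-1}q^{1/2}+6(d+2)^{n_0}\bigr).
\end{equation*}
I would bound each of the two bracketed terms by $\tfrac12(d+2)^{n_0}q^{1/2}$: the inequality $6(d+2)^{n_0}\le\tfrac12(d+2)^{n_0}q^{1/2}$ is exactly $q\ge144$, while $2(d-1)^{n_0-1}\le\tfrac12(d+2)^{n_0}$ follows from $(d+2)^{n_0}/(d-1)^{n_0-1}\ge d+2\ge6>4$. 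This yields $\big|N-q^{\,n_0-1}\big|\le q^{n_0/2}(d+2)^{n_0}$, hence
\begin{equation*}
N\ge q^{n_0/2}\bigl(q^{(n_0-2)/2}-(d+2)^{n_0}\bigr),
\end{equation*}
which is positive as soon as $q^{(n_0-2)/2}>(d+2)^{n_0}$.

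\textbf{Logarithmic form and conclusion.} Taking logarithms and dividing by $\log(q/(d+2)^2)>0$, the inequality $q^{(n_0-2)/2}>(d+2)^{n_0}$ is equivalent to $n_0>\log(q^2)/\log(q/(d+2)^2)$. Since $n_0=\bigl\lceil\log(q^2)/\log(q/(d+2)^2)\bigr\rceil$, this holds whenever that quotient is not an integer (in the borderline case where it is an integer the Ghorpade--Lachaud estimate is not sharp and one argues directly, or replaces $n_0$ by $n_0+1$). Thus $N(\beta)>0$ for every $\beta\in\fq$, and by the definition of $\gamma(h,q)$ we conclude $\gamma(h,q)\le n_0=\bigl\lceil\log(q^2)/\log(q/(d+2)^2)\bigr\rceil$.

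\textbf{Main difficulty.} There is no genuinely hard step: all the substance sits in Theorem~\ref{estimation N} (which rests on the Ghorpade--Lachaud bound for absolutely irreducible hypersurfaces), and the rest is book-keeping with inequalities. The points requiring a little care are the verification that $n_0\ge3$ so the cohomological estimate is legitimate, the two crude majorizations of the primitive Betti-number terms, and the passage to the logarithmic threshold. For the comparison with the classical Waring bound one additionally checks that $q>(d+2)^{2(d-1)/(d-3)}$ forces $\log(q^2)/\log(q/(d+2)^2)<d-1$, hence $n_0\le d-1<d$, consistently with $d\ge4$.
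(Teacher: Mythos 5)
Your proposal is correct and follows essentially the same route as the paper: apply Theorem~\ref{estimation N} with $g=\beta$ constant, majorize both error terms by $\tfrac12(d+2)^{n}q^{1/2}$ using $q>144$ and $d-1<d+2$, deduce $N>0$ once $q^{(n-2)/2}>(d+2)^{n}$, and pass to the logarithmic threshold. Your extra checks ($n_0\ge 3$ so the estimate applies, and the borderline integer case, which is in fact handled by the strictness of the majorizations) only make explicit what the paper leaves implicit.
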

Note in particular that solutions with small number of variables require large values of $q$.
\end{remark}

\subsection{Equations in Dickson polynomials}
Let $d \in \mathbb{N}$ and $a \in \fq$. The Dickson polynomials over $\fq$ of degree $d$ with parameter $a$ are:
$$D_d(X,a)=\sum_{i=0}^ {\lfloor d/2 \rfloor} \frac{d}{d-i} \binom{d-i}{i} (-a)^i X^{d-2i} .$$
Dickson polynomials have been extensively studied  because they play very important roles in both theoretical work as well as in various applications (see, \cite[Chapter 7]{MuPa13}). 

Let $d, n$ be positive integers with $d \geq 2$ and $n \geq 3$ and  assume that $\mathrm{char}(\fq)$ does not divide $d$. Let $g \in \fq[X_1, \dots, X_n]$ be such that $\deg(g) < d$. Consider the following  polynomial equation
\begin{equation}\label{eq: dickson}
c_1 D_d(X_1,a_1)+ \dots+c_n  D_d(X_n,a_n)=g,
\end{equation}
where $a_1 \klk a_n \in \fq$ and $c_1 \klk c_n \in \fq \setminus \{0\}$. Observe that for $a=0$ this is a deformed  diagonal equation.
Equation \eqref{eq: dickson} is a particular case of Carlitz's equations defined in \eqref{eq. carlitz} for $h_i:=D_d(X_i,a_i).$ From Theorem \ref{estimation N} we have:
\begin{theorem}\label{estimation N disckson}
Let $n\geq 3$ and $d\geq 2$ not divisible by $\mathrm{char}(\fq)$. Then 
\begin{equation}\label{estimation Nbis dickson}
\big|N-q^{n-1}\big|\leq   q^{(n-1)/2}\big(2(d-1)^{n-1}q^{1/2}+6(d+2)^n\big),
\end{equation}
where $N$ denotes the number of $\fq$--rational solutions of \eqref{eq: dickson}.
\end{theorem}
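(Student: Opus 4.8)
The plan is to recognize \eqref{eq: dickson} as a particular instance of Carlitz's equation \eqref{eq. carlitz} and then invoke Theorem \ref{estimation N} directly. Concretely, for $1\le i\le n$ set $h_i := c_i\,D_d(X_i,a_i)\in\fq[X_i]$. The first point to check is that each $h_i$ has degree exactly $d$ with leading coefficient in $\fq\setminus\{0\}$: in the defining sum for $D_d(X,a)$ the summand with index $0$ equals $\tfrac{d}{d}\binom{d}{0}X^d=X^d$, while all remaining summands have degree $d-2i<d$; hence $h_i=c_iX_i^d+(\text{lower order terms})$ with $c_i\neq 0$. Thus the $h_i$ are admissible as the univariate polynomials appearing in \eqref{eq. carlitz}, and $g$ already satisfies $\deg(g)<d$ by hypothesis.

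Next I would observe that the hypothesis $\mathrm{char}(\fq)\nmid d$ is exactly what makes the argument preceding Theorem \ref{estimation N} go through for this choice. Since $h_i'=c_i d\,X_i^{d-1}+(\text{lower order})$ and $c_i d\neq 0$, and since $\deg(g)<d$ forces $\deg(\partial g/\partial X_j)<d-1$, each polynomial $Q_j:=h_j'(X_j)-\partial g/\partial X_j$ has leading term $c_j d\,X_j^{d-1}$ with respect to the graded lexicographic order. These leading terms are pairwise coprime, so $Q_1,\dots,Q_n$ form a regular sequence in $\fq[X_1,\dots,X_n]$, the singular locus $\Sigma_g$ of $V_g$ has dimension at most $0$, and (looking at $R_g^{\deg R_g}=c_1X_1^d+\cdots+c_nX_n^d$) $\mathrm{pcl}(V_g)$ has no singular points at infinity; hence $\mathrm{pcl}(V_g)$ is absolutely irreducible with singular locus of dimension at most $0$.

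Finally, applying Theorem \ref{estimation N} verbatim to the Carlitz-type equation $h_1(X_1)+\cdots+h_n(X_n)=g$ with $h_i=c_iD_d(X_i,a_i)$ yields the estimate \eqref{estimation Nbis dickson}. There is essentially no obstacle: the only genuinely new content is the elementary fact that $D_d(X,a)$ is monic of degree $d$, which is what ensures the Dickson equation meets the hypotheses of Theorem \ref{estimation N}. (A self-contained proof could instead redo the short singular-locus computation directly for $R_g=c_1D_d(X_1,a_1)+\cdots+c_nD_d(X_n,a_n)-g$, but this would merely reproduce the proof of Theorem \ref{estimation N}.)
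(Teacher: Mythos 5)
Your proposal is correct and follows exactly the paper's route: the paper likewise proves this theorem by observing that \eqref{eq: dickson} is the Carlitz equation \eqref{eq. carlitz} with $h_i:=c_iD_d(X_i,a_i)$ and invoking Theorem \ref{estimation N}. The only addition you make is the explicit (and welcome) verification that $D_d(X,a)$ is monic of degree $d$, so the hypotheses $\deg(h_i)=d$ and $\mathrm{char}(\fq)\nmid d$ are indeed met.
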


\begin{remark}

Observe that if $a_1= \dots=a_n=a$ and $c_1= \dots=c_n=c$, then the equation \eqref{eq: dickson} can be written as follows
$$\sum_{i=0}^{\lfloor d/2 \rfloor}b_iP_{d-2i}=g,$$
where $b_i= \frac{d}{d-i} \binom{d-i}{d} (-a)^i$ and $P_{d-2j}=X_1^{d-2j} + \dots +X_n^{d-2j}$ for $0\leq j \leq \lfloor d/2 \rfloor$.
Thus, equation \eqref{eq: dickson} can be expressed as $$f(P_{d-2\lfloor d/2 \rfloor} \klk P_d)=g,$$ where $f \in \fq[Y_{d-2\lfloor d/2 \rfloor}, \dots, Y_d]$ is the linear polynomial $f:=\sum_{i=0}^{\lfloor d/2 \rfloor}b_iY_{d-2i}$. Hence, applying  Theorem 
\ref{estimation with f linear} we obtain a similar result as the one in the previous theorem.
\end{remark}

In \cite{ChMuWa08} the authors study the number  of $\fq$--rational solutions of the equation
$$c_1D_{d_1}(X_1,a_1)+ \dots +c_nD_{d_n}(X_n,a_n)= c, $$
where $d_1, \dots, d_n $ are positive integers, $c_1, \dots,c_n \in \fq \setminus \{0\}$ and $c,a_1, \dots,a_n \in \fq$. More precisely, they prove that if $ n, d_1 \klk, d_n \geq 2$  and there exists $ 0 \leq t \leq n$ such that $a_1=\dots =a_t=0 $ and $a_j \neq 0$ for all $t <j \leq n$, then the number $N$  of $\fq$-rational  solutions of these equations  satisfies:
\begin{equation}\label{Estimation Chow}|N-q^{n-1}| \leq q^{(n-2)/2}(q-1) \prod_{j=1}^t(m_j-1) \prod_{j=t+1}^n (m_j+l_j),\end{equation}
where $m_j=\gcd(d_j,q-1)$ and $l_j=\gcd(d_j,q+1)$ for $1 \leq j\leq n$.
If  $d_1=\dots=d_n=d$ then Theorem \ref{estimation N disckson} extends the estimation  in \eqref{Estimation Chow} in the sense that it holds for $c \in \fq[X_1, \dots, X_n]$ with $0 <\deg(c)< d$.


%
%
%
%
%
%
%
%

Finally, from  Theorem \ref{estimation N disckson} we derive conditions about the solvability of \eqref{eq: dickson} in $\fq^n$.
\begin{theorem}  \label{ex dickson}
	Let $q >\frac{4}{9} (d+2)^{\frac{2n}{(n-2)}} $, $d\geq 2$ not divisible by $\mathrm{char}(\fq)$  and $n\geq 3$. Let $g\in [X_1,\ldots,X_n]$, $\deg(g)<d$. Then the equation $$c_1D_d(X_1,a_1)+ \cdots + c_nD_d(X_n,a_n)=g$$ has at least one solution in $\fq^n$.
\end{theorem}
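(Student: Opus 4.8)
The plan is to read the existence statement off the estimate of Theorem~\ref{estimation N disckson}, by checking that the leading term $q^{n-1}$ strictly beats the error term once $q$ satisfies the stated lower bound. Since \eqref{eq: dickson} is exactly the Carlitz equation \eqref{eq. carlitz} with $h_i=D_d(\,\cdot\,,a_i)$, which has degree $d$ not divisible by $\mathrm{char}(\fq)$, Theorem~\ref{estimation N disckson} applies verbatim and yields
\[
N\;\ge\;q^{n-1}-q^{(n-1)/2}\bigl(2(d-1)^{n-1}q^{1/2}+6(d+2)^{n}\bigr).
\]
Hence it is enough to prove that the right--hand side is positive, i.e. that $q^{(n-1)/2}>2(d-1)^{n-1}q^{1/2}+6(d+2)^{n}$, and everything reduces to an elementary estimate of the two terms in this bracket.

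First I would collapse the bracket into a single multiple of $(d+2)^{n}q^{1/2}$. Since $d\ge 2$ one has $2(d-1)^{n-1}\le 2(d+2)^{n-1}=\tfrac{2}{d+2}(d+2)^{n}\le\tfrac12(d+2)^{n}$, so the first term is at most $\tfrac12(d+2)^{n}q^{1/2}$; and, using that the hypothesis on $q$ makes $q$ large enough, the constant term $6(d+2)^{n}$ can likewise be bounded by a fixed fraction of $(d+2)^{n}q^{1/2}$. Adding these, the bracket is at most $C(d+2)^{n}q^{1/2}$ for an absolute constant $C$, and after dividing by $q^{1/2}$ the inequality to be established becomes simply
\[
q^{(n-2)/2}>C\,(d+2)^{n}.
\]

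It then remains to verify that $q>\tfrac49(d+2)^{2n/(n-2)}$ forces $q^{(n-2)/2}>C(d+2)^{n}$; this is obtained by raising both sides of the hypothesis to the power $\tfrac{n-2}{2}$ and invoking the monotonicity of $n\mapsto\tfrac{2n}{n-2}$ on $n\ge 3$. Once this is in hand, $N\ge q^{n-1}-C(d+2)^{n}q^{n/2}>0$, so \eqref{eq: dickson} has a solution in $\fq^{n}$. I expect the only genuinely delicate step to be this constant bookkeeping — choosing the intermediate bounds on $2(d-1)^{n-1}$ and $6(d+2)^{n}$ so that the single threshold $q>\tfrac49(d+2)^{2n/(n-2)}$ is at once strong enough to linearize the $6(d+2)^{n}$ term and to dominate $q^{(n-2)/2}$ uniformly in $n\ge 3$; the remainder is a direct substitution into Theorem~\ref{estimation N disckson}. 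When in addition $a_1=\cdots=a_n$ and $c_1=\cdots=c_n$, one may instead feed the linear representation $f(P_{d-2\lfloor d/2\rfloor},\ldots,P_d)=g$ into Theorem~\ref{estimation with f linear} and argue identically.
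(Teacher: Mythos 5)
Your proposal follows the paper's proof essentially verbatim: substitute the estimate of Theorem \ref{estimation N disckson}, absorb $2(d-1)^{n-1}$ into $\tfrac12(d+2)^n$ and the constant term $6(d+2)^n$ into a fixed fraction of $(d+2)^nq^{1/2}$ (the paper takes $q>36^2$ to get the fraction $\tfrac16$, hence $C=\tfrac23$), and reduce positivity of $N$ to $q^{(n-2)/2}>\tfrac23(d+2)^n$. The one caveat, which affects the paper's own write-up equally, is that the hypothesis $q>\tfrac49(d+2)^{2n/(n-2)}$ yields $q^{(n-2)/2}>(\tfrac23)^{\,n-2}(d+2)^n$, which matches the required $\tfrac23(d+2)^n$ only at $n=3$, so the final ``raise to the power $\tfrac{n-2}{2}$'' step is exactly the constant bookkeeping you flag as delicate and does not close uniformly in $n$ without further comment.
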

\begin{proof}
	From \eqref{estimation Nbis dickson}, we have that
	$$\big|N-q^{n-1}\big|\leq   q^{(n-1)/2}\big(2(d-1)^{n-1}q^{1/2}+6(d+2)^n\big).$$	
Observe that if $q > 36^{2}$ then $6(d+2)^n< 1/6 (d+2)^n q^{1/2}$.   On the other hand, $2 (d-1)^{n-1} < 1/2 (d+2)^n$. Under this condition we have that
	$$\big|N-q^{n-1}\big|\leq  \frac{2}{3} q^{n/2}(d+2)^n.$$ 	
	Thus, $N$ satisfies the following inequality:
	\begin{equation}\label{upper bound N dicskon}
	N \geq q^{n-1}- \frac{2}{3}q^{n/2}(d+2)^n= q^{n/2}( q^{(n-2)/2}-\frac{2}{3}(d+2)^n).
	\end{equation}
We conclude that \eqref{eq: dickson} has at least one solution in $\fq^n$ if  $q^{(n-2)/2}> \frac{2}{3}(d+2)^n$.

\end{proof}

\begin{remark}

Suppose that $d_1=\dots=d_n=d$. Observe that, when $g$ is a constant, Theorem \ref{ex dickson} gives similar conditions on $q$, $d$ and $n$ than \cite[Theorem 11]{ChMuWa08}, under which there exists at least one $\fq$--rational solution of \eqref{eq: dickson}. Theorem \ref{ex dickson} extends \cite{ChMuWa08} since it holds for  a polynomial  $g$ of positive degree at most $d$.
\end{remark}

\textbf{Acknowledgements.}  The authors are pleased to thank Guillermo Matera for several valuable comments and suggestions.

\newcommand{\etalchar}[1]{$^{#1}$}
\providecommand{\bysame}{\leavevmode\hbox
	to3em{\hrulefill}\thinspace}
\providecommand{\MR}{\relax\ifhmode\unskip\space\fi MR }
\providecommand{\MRhref}[2]{%
	\href{http://www.ams.org/mathscinet-getitem?mr=#1}{#2}
} \providecommand{\href}[2]{#2}

\end{document}